\newtheorem{thm}{Theorem}[section]
\newtheorem{conj}[thm]{Conjecture}
\newtheorem{lem}[thm]{Lemma}
\newtheorem{obs}[thm]{Observation}
\newtheorem{prop}[thm]{Proposition}
\def\Sz{{\rm Sz}}
\def\GF{{\rm GF}}
\def\GAP{\textbf{{\rm GAP}}}
\def\Out{{\rm Out}}
\def\GL{{\rm GL}}
\newcommand{\norml}{\vartriangleleft}
\newcommand{\liso}{\lesssim}
\renewcommand\le{\leqslant}
\renewcommand\ge{\geqslant}
\title[Point regular generalized quadrangles]{On generalized quadrangles with a point regular group of automorphisms}
\author{Eric Swartz}
\address{Department of Mathematics, College of William and Mary, P.O. Box 8795, Williamsburg, VA 23187-8795}
\email{easwartz@wm.edu}
\subjclass[2010]{Primary 51E12, 05B25}
\keywords{generalized quadrangle, regular action}
\begin{document}

\begin{abstract}
 A generalized quadrangle is a point-line incidence geometry such that any two points lie on at most one line and, given a line $\ell$ and a point $P$ not incident with $\ell$, there is a unique point of $\ell$ collinear with $P$.  We study the structure of groups acting regularly on the point set of a generalized quadrangle.  In particular, we provide a characterization of the generalized quadrangles with a group of automorphisms acting regularly on both the point set and the line set and show that such a thick generalized quadrangle does not admit a polarity.  Moreover, we prove that a group $G$ acting regularly on the point set of a generalized quadrangle of order $(u^2, u^3)$ or $(s,s)$, where $s$ is odd and $s+1$ is coprime to $3$, cannot have any nonabelian minimal normal subgroups.
\end{abstract}

\maketitle

\section{Introduction}
A \textit{finite generalized quadrangle} is an incidence geometry consisting of a finite set $\mathcal{P}$ of points and a finite set $\mathcal{L}$ of lines such that, if $P \in \mathcal{P}$ and $\ell \in \mathcal{L}$ such that $P$ is not incident with $\ell$, then there is a unique point on $\ell$ collinear with $P$.  If every point is incident with at least three lines and every line is incident with at least three points, then the generalized quadrangle is said to be \textit{thick}.  From this definition, one can show that in a thick generalized quadrangle any point is incident with the same number of lines and any line is incident with the same number of points.  A generalized quadrangle where every point is incident with exactly $t+1$ lines and any line is incident with exactly $s+1$ points, where $s$ and $t$ are fixed positive integers, is said to have \textit{order} $(s,t)$.  For further basic properties of generalized quadrangles, see \cite{fgq}. 

A group $G$ is said to act \textit{regularly} on a set $\Omega$ if $G$ is transitive on $\Omega$ and, for all $\omega \in \Omega$, the stabilizer $G_\omega$ of $\omega$ in $G$ is trivial.  When a group $G$ acts regularly on a set $\Omega$, there is a natural identification of elements of $G$ with elements of $\Omega$: if we choose a distinguished element $\omega \in \Omega$, then we may identify $\omega$ with the identity $1 \in G$, and, if $\alpha \in \Omega$, we may identify $\alpha$ with the unique element $g \in G$ such that $\omega^g = \alpha$.  

There has been substantial interest in generalized quadrangles with a group of automorphisms $G$ acting regularly on the point set $\mathcal{P}$, and we briefly summarize here the current state of knowledge.  Such generalized quadrangles were first studied in \cite{ghinelli}, where techniques from the study of difference sets of groups are used to show that, if $\mathcal{Q}$ is a generalized quadrangle of order $(s,t)$, where $s = t$ and $s$ is even, and $G$ acts regularly on the point set of $\mathcal{Q}$, then $G$ has trivial center, $G$ is not a Frobenius group, and $s^2 + 1$ is not squarefree.  These techniques were pushed even further in \cite{yoshiara}, where it is shown that a generalized quadrangle with $s = t^2$ cannot have a group of automorphisms that acts regularly on points.  In \cite{DT1}, it is shown that if $\mathcal{Q}$ is a thick finite generalized quadrangle of order $(s,t)$ admitting an abelian automorphism group that acts regularly on points, then $\mathcal{Q}$ is isomorphic to a generalized quadrangle $T_2^*(O)$ arising from a generalized hyperoval, and the group acting on $\mathcal{Q}$ is elementary abelian of order $2^{3n}$ for some natural number $n$.  (For further background information and descriptions of the generalized quadrangles referred to in this paragraph, the curious reader is again directed to \cite{fgq}.)  In \cite{DT2}, it is shown that if a Heisenberg group of order $q^3$, where $q$ is odd, acts regularly on the point set of $\mathcal{Q}$, then $\mathcal{Q}$ is a \textit{Payne-derived generalized quadrangle} of a thick \textit{elation generalized quadrangle} having a \textit{regular point}.  (A regular point of a generalized quadrangle is defined combinatorially and has no relation to a regular group action.)  It was shown in \cite{BambergGiudici} that the only classical generalized quadrangles with a point-regular group of automorphisms are $Q(5,2)$ and $Q(5,8)$, and the authors showed that the Payne-derived generalized quadrangles of order $(q-1, q+1)$ obtained from the \textit{symplectic generalized quadrangle} $W(q)$, where $q$ is a prime power but not prime, can actually have several distinct groups of automorphisms acting regularly on the point set, showing that the class of such groups is much more varied than previously thought.  Finally, the study of groups acting regularly on the point set of generalized quadrangles figures prominently in the study of \textit{skew-translation generalized quadrangles}; see \cite{ghinelliAS} and the subsequent work in \cite{ASConfigs}.

The purpose of this paper is to study further the structure of such generalized quadrangles and the groups that act regularly on their point sets.  A first step in the study of a generalized quadrangle $\mathcal{Q}$ with a point regular group $G$ of automorphisms is to consider the action of $G$ on the lines of $\mathcal{Q}$.  One possibility of interest is that $G$ also acts regularly on lines, since, otherwise, some elements in the group acting regularly on the point set must fix a line, and one can proceed with an analysis of line stabilizers.  In the case when $G$ also acts regularly on lines, we have $s = t$, in which case we say that $\mathcal{Q}$ has order $s$ (see Lemma \ref{lem:GQbasics} (i)).  The only known example of a generalized quadrangle with a group of automorphisms acting regularly on both points and lines is the unique thin generalized quadrangle of order $(1,1)$, and it is not difficult to show that $s$ would have to be even in any other examples (see Section \ref{sect:lines}).

We are able to characterize further the groups that act regularly on both the point set and the line set of a generalized quadrangle.  To that end, we have the following result, which follows immediately from Propositions \ref{prop:regregGQ} and \ref{prop:regregGQ2}.

\begin{thm}
 \label{thm:regregchar}
 The group $G$ acts regularly on both the point set and the line set of a generalized quadrangle $\mathcal{Q}$ of order $s$ if and only if $G$ contains a subset $\Sigma = \{g_0 = 1, g_1, g_2,..., g_s\}$ satisfying the following two conditions:
\begin{itemize}
 \item[(AX1)] If $g \in G \backslash \Sigma$, then there exist unique integers $i,j,k$ (with $i \neq j$ and $k \neq j$) such that $g = g_ig_j^{-1}g_k$.
 \item[(AX2)] If $g_ig_j^{-1}g_k \in \Sigma,$ then either $i = j$ or $j = k$.
\end{itemize} 
\end{thm}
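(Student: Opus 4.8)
The plan is to set up a dictionary between regular--regular actions and subsets of $G$, and then translate the axioms of a generalized quadrangle one at a time. Choose an incident point--line pair $(P_0,\ell_0)$, use it to identify $\mathcal P$ with $G$ via $P_0^{\,g}\leftrightarrow g$ and, separately, $\mathcal L$ with $G$ via $\ell_0^{\,g}\leftrightarrow g$, and put $\Sigma:=\{g\in G: P_0 \text{ is incident with }\ell_0^{\,g}\}$, the set of lines through $P_0$. Then $1\in\Sigma$ (as $P_0$ is incident with $\ell_0=\ell_0^{\,1}$) and $|\Sigma|=t+1=s+1$ by Lemma~\ref{lem:GQbasics}(i), so we may write $\Sigma=\{g_0=1,g_1,\dots,g_s\}$; moreover, applying the automorphism given by right multiplication by $x^{-1}$ shows that $P_0^{\,x}$ is incident with $\ell_0^{\,y}$ if and only if $yx^{-1}\in\Sigma$. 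Conversely, given any $\Sigma=\{g_0=1,g_1,\dots,g_s\}\subseteq G$, this same rule defines an incidence structure on two disjoint copies of $G$ for which right multiplication by any element of $G$ is an automorphism (since $(yg)(xg)^{-1}=yx^{-1}$), acting regularly on points and on lines. Everything then reduces to showing that this incidence structure is a generalized quadrangle of order $s$ if and only if $\Sigma$ satisfies (AX1) and (AX2).

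For the forward direction, I would first translate the partial-linear-space condition into (AX2). Suppose $g_ig_j^{-1}g_k\in\Sigma$ with $i\ne j$ and $j\ne k$, and set $h:=g_ig_j^{-1}g_k$ and $z:=g_j^{-1}g_k$. A one-line computation shows that the two points $P_0^{\,1}$ and $P_0^{\,z}$ are both incident with the two lines $\ell_0^{\,h}$ and $\ell_0^{\,g_k}$; here $P_0^{\,1}\ne P_0^{\,z}$ because $j\ne k$, and $\ell_0^{\,h}\ne\ell_0^{\,g_k}$ because $i\ne j$, contradicting that two points lie on at most one line. Hence (AX2) holds. Next I would translate the quadrangle axiom into (AX1). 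Fix $g\in G\setminus\Sigma$, so that $(P_0,\ell_0^{\,g})$ is an anti-flag, and consider pairs $(Q,m)$ with $P_0$ incident with $m$, $m$ incident with $Q$, and $Q$ incident with $\ell_0^{\,g}$. Writing $Q=P_0^{\,z}$ and $m=\ell_0^{\,w}$, the three incidences read $w\in\Sigma$, $wz^{-1}\in\Sigma$, $gz^{-1}\in\Sigma$; labelling these three elements $g_k$, $g_j$, $g_i$ respectively and eliminating $z$ and $w$ yields $g=g_ig_j^{-1}g_k$, with $i\ne j$ (else $g=g_k\in\Sigma$) and $j\ne k$ (else $Q=P_0$, impossible as $P_0$ is not on $\ell_0^{\,g}$). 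This assignment $(Q,m)\mapsto(i,j,k)$ is readily checked to be a bijection onto the set of triples with $i\ne j$, $j\ne k$, and $g=g_ig_j^{-1}g_k$; and the quadrangle axiom says precisely that there is a unique such pair $(Q,m)$. So for each $g\in G\setminus\Sigma$ there is exactly one such triple, which is (AX1).

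For the reverse direction, assume (AX1) and (AX2). Counting triples with $i\ne j$, $k\ne j$ gives $(s+1)s^2$ of them, so (AX1) forces $|G|=(s+1)s^2+(s+1)=(s+1)(s^2+1)$, the right number of points and lines. A short manipulation shows that (AX2) implies that two distinct points are incident with at most one line, and likewise two distinct lines with at most one point: if $g_j^{-1}g_i=g_{j'}^{-1}g_{i'}$ with $i\ne j$, $i'\ne j'$ and $j\ne j'$, then $g_i=g_jg_{j'}^{-1}g_{i'}$ would lie in $\Sigma$ while violating (AX2), so $j=j'$ and then $i=i'$. Since each point is on $|\Sigma|=s+1$ lines and each line on $|\Sigma^{-1}|=s+1$ points, it remains to verify the quadrangle axiom; but the bijection of the previous paragraph, applied at the anti-flag $(P_0,\ell_0^{\,g})$, shows that (AX1) is exactly the statement that this anti-flag has a unique incident pair $(Q,m)$, and transporting by right multiplication gives the same for every anti-flag. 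Thus the incidence structure satisfies all the axioms of a generalized quadrangle of order $(s,s)$, i.e.\ of order $s$, and, as noted, $G$ acts on it regularly on points and on lines by right multiplication. Propositions~\ref{prop:regregGQ} and~\ref{prop:regregGQ2} would record these two directions separately.

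I expect the main work to be the careful bookkeeping of the index constraints in the anti-flag analysis --- confirming that exactly the conditions $i\ne j$ and $j\ne k$ arise, with no constraint tying $i$ to $k$, and that the correspondence $(Q,m)\leftrightarrow(i,j,k)$ is a genuine bijection --- together with the routine verification that right multiplication preserves incidence and that an incidence structure meeting the counting conditions, the partial-linear-space condition, and the quadrangle axiom is indeed a generalized quadrangle. The one mild insight beyond this translation is recognizing that the ``unique incident pair $(Q,m)$'' form of the quadrangle axiom, rather than the bare ``unique collinear point'' form, is what matches the triple product $g_ig_j^{-1}g_k$; a couple of minor degeneracies (the order-$1$ case, and incidences forced by $g\in\Sigma$) should also be dispatched but pose no real difficulty.
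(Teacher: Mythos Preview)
Your proposal is correct and follows essentially the same approach as the paper: identify points and lines with $G$ via the regular actions, encode incidence by membership of a shifted element in $\Sigma$, and translate the partial-linear-space condition into (AX2) and the quadrangle axiom into (AX1), with the converse obtained by the same dictionary plus a count giving $|G|=(s+1)(s^2+1)$. The only cosmetic difference is that the paper takes $\Sigma$ to be the set of \emph{points on a fixed line} (incidence rule $xy^{-1}\in\Sigma$), whereas you take $\Sigma$ to be the set of \emph{lines through a fixed point} (incidence rule $yx^{-1}\in\Sigma$); these are dual choices and lead to the same axioms after the evident relabelling.
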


Since there is a natural identification of $G$ with both the point set and the line set in this case, it is natural to think that $\mathcal{Q}$ should have a polarity $\theta$; that is, an order two isomorphism from $\mathcal{Q}$ to the \textit{dual} generalized quadrangle $\mathcal{Q}'$, which has point set $\mathcal{L}$ and line set $\mathcal{P}$.  In fact, the following result shows that such a thick generalized quadrangle cannot have a polarity.

\begin{thm}
 \label{thm:polarity}
 A generalized quadrangle $\mathcal{Q}$ of order $s > 1$ that has a group of automorphisms acting regularly on both the point set and line set cannot admit a polarity.
\end{thm}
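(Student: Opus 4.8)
\medskip
\noindent\textbf{Proof plan.} Suppose, for contradiction, that $\mathcal{Q}$ has order $s>1$, that $G\le\Aut(\mathcal{Q})$ is regular on both $\mathcal{P}$ and $\mathcal{L}$, and that $\theta$ is a polarity of $\mathcal{Q}$.

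The first stage is to record the standard structure of a polarity of a thick generalized quadrangle (see \cite{fgq}): since $\mathcal{Q}$ has no triangles, no two absolute points of $\theta$ are collinear, and for a line $\ell$ that is not absolute the unique point of $\ell$ collinear with $\theta(\ell)$ is again absolute, so every line carries exactly one absolute point. Hence the set $\mathcal{O}$ of absolute points is an ovoid, $|\mathcal{O}|=s^2+1$, and dually the absolute lines form a spread. Comparing the trace of the symmetric incidence matrix afforded by $\theta$ with its spectrum, whose entries lie in $\{\pm(s+1),\pm\sqrt{2s},0\}$, recovers the classical fact that $2s$ is a perfect square; in particular $s$ is even, consistent with Section~\ref{sect:lines}. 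By itself this is not yet a contradiction, so the regular action must enter.

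The second stage is to coordinatize using Theorem~\ref{thm:regregchar}: identify $\mathcal{P}=\mathcal{L}=G$ so that $p$ is incident with $\ell$ precisely when $p^{-1}\ell\in\Sigma$, and choose the base line to be the $\theta$-image of the base point. Then the point-to-line bijection $\mu=\theta|_{\mathcal{P}}\colon G\to G$ fixes $1$, and unwinding ``$\theta$ preserves incidence'' yields $\mu(\Sigma^{-1})=\Sigma$ together with $p^{-1}\ell\in\Sigma\iff\mu^{-1}(\ell)^{-1}\mu(p)\in\Sigma$ for all $p,\ell$, whence $\mathcal{O}=\{p\in G:p^{-1}\mu(p)\in\Sigma\}$. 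Conjugation by $\theta$ sends $G$ isomorphically onto a group $G^{\theta}$ that is again regular on both points and lines; the cleanest situation, which I would treat first and then try to reduce to, is that $\theta$ normalizes $G$. In that case $\theta$ induces an involutory automorphism $\alpha$ of $G$, one computes $\mu=\alpha$, and the constraints become exactly $\alpha^{2}=1$ and $\alpha(\Sigma)=\Sigma^{-1}$, with $\mathcal{O}=\{p:p^{-1}\alpha(p)\in\Sigma\}$.

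The third stage, which carries the real content, is to produce the contradiction. Writing $F=\mathrm{Fix}(\alpha)$, the map $p\mapsto p^{-1}\alpha(p)$ is constant on each right coset of $F$ and injective across cosets, so $\mathcal{O}$ is a union of right cosets of $F$; hence $|F|$ divides $s^{2}+1$, and $F\subseteq\mathcal{O}$ (each element of $F$ is absolute), so $F$ is a subgroup that is also a partial ovoid and therefore $F\cap\Sigma\Sigma^{-1}=\{1\}$. If $F=\{1\}$, then $\alpha$ is a fixed-point-free involutory automorphism of $G$, hence inversion, hence $G$ is abelian; but a group acting regularly on both the points and the lines of a thick generalized quadrangle cannot be abelian, since an abelian point-regular group forces $\mathcal{Q}\cong T_2^*(O)$, which has $s\ne t$, by \cite{DT1}. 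Thus $|F|>1$, and the remaining, main, task is to eliminate these cases: here I would use axioms (AX1) and (AX2) --- which describe exactly how each element of $G$ factors as $g_ig_j^{-1}g_k$ --- to count the solutions of $p^{-1}\alpha(p)\in\Sigma$ and compare with $s^{2}+1$, forcing $s\le 1$, and also to handle the case in which $\theta$ does not normalize $G$, by passing to a $\theta$-invariant regular subgroup of a suitable transitive overgroup of $G$ (or by analyzing the centralizer of $\theta$ in the correlation group of $\mathcal{Q}$, which acts on the ovoid $\mathcal{O}$). I expect this last step --- reconciling the difference-set-type axioms with the rigid value $|\mathcal{O}|=s^{2}+1$ --- to be the principal obstacle; everything preceding it is essentially bookkeeping.
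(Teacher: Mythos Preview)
Your plan is honest about its own gaps, but those gaps are the whole proof: you never carry out the count that would eliminate $|F|>1$, and you have no mechanism for reducing to the case in which $\theta$ normalizes $G$ (the suggestion of ``passing to a $\theta$-invariant regular subgroup of a suitable transitive overgroup'' is wishful --- nothing guarantees such a subgroup exists). So as it stands the argument proves only the very special subcase in which $\theta$ normalizes $G$ and the induced involution on $G$ is fixed-point-free, via De~Winter--Thas; everything else is left open.

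The paper takes an entirely different and much shorter route that never touches the coordinatization or the ovoid of absolute points beyond the single classical consequence you already noted, namely that $2s$ is a square. The missing idea is to exploit the \emph{group-theoretic} structure of $G$ established earlier in Section~\ref{sect:lines}: since $|G|=(s+1)(s^2+1)$ is coprime to $6$ (Lemma~\ref{lem:orderGreg}), $G$ is solvable and $F(G)=O_p(G)$ for a unique prime $p\mid s^2+1$ (Lemma~\ref{lem:FGregreg}); moreover Lemma~\ref{lem:Opsize} gives $|O_p(G)|\ge 2s+3$. Now the polarity forces $2s$ to be a square, so $s^2+1=(s+\sqrt{2s}+1)(s-\sqrt{2s}+1)$; since $p$ is odd and coprime to $s$, the full $p$-part $p^d$ of $|G|$ divides exactly one of these two factors, yet each factor is strictly less than $2s+3\le p^d$. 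That is the contradiction. In short, the leverage comes not from analyzing $\theta$ combinatorially, but from the lower bound on $|F(G)|$ squeezed against the arithmetic factorization of $s^2+1$ that the polarity imposes.
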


On the other hand, the cyclic group of order $4$ satisfies the conditions listed in Theorem \ref{thm:regregchar} (see Observation \ref{obs:thin}).  This begs the question of whether there exist any groups of order greater than $4$ satisfying the conditions in Theorem \ref{thm:regregchar}.  Toward that end, we prove the following theorem, which places severe restrictions on the order of such a generalized quadrangle and the group acting regularly on its point set and line set.  (For information on the group theoretic notation used in the statement of Theorem \ref{thm:regregorder}, see Subsection \ref{subsub:group}.)

\begin{thm}
 \label{thm:regregorder}
Let the group $G$ act regularly on both the point set and the line set of a thick generalized quadrangle of order $s$.  Then, $G$ is solvable, $s+1$ is coprime to both $2$ and $3$, and $s^2 + 1$ is not squarefree.  Moreover, there exists a unique prime $p$ dividing $s^2 + 1$ such that $F(G) = O_p(G)$ and the following conditions are satisfied:
\begin{itemize}
 \item[(1)] $F(G)$ is not cyclic;
 \item[(2)] $|F(G)| \ge 2s + 3$;
 \item[(3)] If $p^n$ is the highest power of $p$ dividing $|G|$, then $|G|$ divides $p^n \cdot \prod_{k=0}^{n-1}(p^{n-k} - 1)$.
\end{itemize}
\end{thm}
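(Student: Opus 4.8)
The plan is to exploit Theorem \ref{thm:regregchar}, convert (AX1)--(AX2) into a single identity in $\mathbb{Z}[G]$, run that identity through the complex representation theory of $G$, and combine the resulting numerical data with classical finite group theory and the earlier results quoted in the introduction. So first I would put $D=\sum_{g\in\Sigma}g$ and $D^{(-1)}=\sum_{g\in\Sigma}g^{-1}$ in $\mathbb{Z}[G]$, expand $DD^{(-1)}D=\sum_{i,j,k}g_ig_j^{-1}g_k$ over all $(s+1)^{3}$ index triples, and separate the triples with $i=j$, with $j=k$, and with $i\ne j\ne k$. Axioms (AX1) and (AX2) say exactly that the last family is in bijection with $G\setminus\Sigma$ (each element once), so after inclusion--exclusion on the first two families one gets
\[
 DD^{(-1)}D=\Bigl(\sum_{g\in G}g\Bigr)+2sD .
\]

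Applying a unitary irreducible representation $\rho$ and writing $\widehat D=\rho(D)$, for nontrivial $\rho$ the term $\sum_{g\in G}g$ vanishes and $\widehat D\widehat D^{*}\widehat D=2s\widehat D$; hence $\widehat D^{*}\widehat D$ is $2s$ times an orthogonal projection, and $\widehat D\widehat D^{*}-(s+1)I$ has every eigenvalue in $\{s-1,-(s+1)\}$. This recovers that the collinearity graph of $\mathcal{Q}$ is the strongly regular Cayley graph on $G$ with parameters $\bigl((s+1)(s^{2}+1),\,s(s+1),\,s-1,\,s+1\bigr)$, identifies the multiplicity of the eigenvalue $s-1$ on the $\rho$-isotypic component of $\mathbb{C}[G]$ with $\operatorname{rank}\rho(D)$, and gives $\sum_{\rho\ne 1}(\dim\rho)\operatorname{rank}\rho(D)=s(s+1)^{2}/2$; these are the integer constraints I would exploit. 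Now, since $G$ is regular on both points and lines, Section \ref{sect:lines} shows $s$ is even; then $s+1$ and $s^{2}+1$ are odd and coprime, $|G|=(s+1)(s^{2}+1)$ is odd, and $G$ is solvable by the Feit--Thompson theorem, while $\gcd(s+1,2)=1$. That $s^{2}+1$ is not squarefree, that $Z(G)=1$, and that $G$ is not Frobenius follow from \cite{ghinelli} applied to the point-regular action. For $\gcd(s+1,3)=1$ I would argue by contradiction: assuming $3\mid s+1$, one combines the reduction of the displayed identity modulo $3$ with the Benson-type congruence $g(x)\equiv 1\pmod{2s}$ for an element $x$ of order $3$, the identity $g(x)=|C_{G}(x)|\cdot|x^{G}\cap C|$ (where $C$ is the connection set of the collinearity graph), and $Z(G)=1$, to reach a contradiction.

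For the Fitting subgroup, $G$ solvable gives $C_{G}(F(G))\le F(G)$. Since the prime divisors of $s+1$ and of $s^{2}+1$ are disjoint, $F(G)=F_{1}\times F_{2}$, where $F_{1}$ and $F_{2}$ are respectively the products of the $O_{q}(G)$ with $q\mid s+1$ and with $q\mid s^{2}+1$. I would show $F_{1}=1$, and that $F_{2}$ is a $q$-group for a single prime, by analysing a minimal normal subgroup $N\norml G$: since $G$ is regular, the $N$-orbits on points form a system of $|G|/|N|$ blocks of size $|N|$; pushing the displayed identity forward into $\mathbb{Z}[G/N]$ constrains how $\Sigma$ meets the cosets of $N$; and, together with the spectral restriction of the previous paragraph and $Z(G)=1$, this is used to exclude a prime divisor $q\mid s+1$ of $|F(G)|$ as well as two distinct prime divisors of $|F(G)|$. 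Hence $F(G)=O_{p}(G)$ for a unique prime $p\mid s^{2}+1$.

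The remaining items then follow from $G/F(G)\hookrightarrow\Aut(F(G))$ (using $C_{G}(F(G))=F(G)$). For (3): $F(G)$ is a $p$-group whose order divides $p^{n}$, hence is generated by at most $n$ elements, so the kernel of the action of $\Aut(F(G))$ on the Frattini quotient $F(G)/\Phi(F(G))$ is a $p$-group and the $p'$-part of $|\Aut(F(G))|$ divides $\prod_{k=0}^{n-1}(p^{n-k}-1)$; as this $p'$-part bounds the $p'$-part of $|G|$, we get $|G|\mid p^{n}\prod_{k=0}^{n-1}(p^{n-k}-1)$. For (1): if $F(G)$ were cyclic then $\Aut(F(G))$ is abelian, so $G/F(G)$ is abelian and $G$ is cyclic-by-abelian of a very restricted shape; I would rule this out using the representation-theoretic constraints above (the character degrees of such a $G$ are too small to realize $\sum_{\rho\ne1}(\dim\rho)\operatorname{rank}\rho(D)=s(s+1)^{2}/2$ compatibly with (AX1)--(AX2)) together with $Z(G)=1$ and the abelian classification of \cite{DT1}. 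For (2): a coset of the normal subgroup $F(G)$ meeting $\Sigma$ in $k$ elements forces $|F(G)|\ge k^{2}-k+1$, since every nonidentity element of $G$ has at most one expression $\sigma\tau^{-1}$ with $\sigma,\tau\in\Sigma$ and the differences coming from a single coset all lie in $F(G)$; counting these incidences over all cosets yields $|F(G)|\ge 2s+3$. I expect the main obstacle to be the work of the third paragraph — transferring the normal-subgroup/quotient structure of $\mathcal{Q}$ into usable arithmetic that forces $F(G)=O_{p}(G)$ with $p\mid s^{2}+1$ — with the non-cyclicity and the explicit bound $2s+3$ being the points where the constraints must be pushed hardest.
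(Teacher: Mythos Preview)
Your group-ring identity $DD^{(-1)}D=\sum_{g\in G}g+2sD$ is correct and attractive, but for this theorem it is largely unnecessary machinery. The paper's proof is almost entirely an assembly of results already in Ghinelli \cite{ghinelli} and Yoshiara \cite{yoshiara} (Lemmas \ref{lem:ghinelli} and \ref{lem:yoshiara} here): once $|G|$ is known to be odd, solvability, $F(G)=O_p(G)$ for a single prime $p\mid s^2+1$, the non-cyclicity of $F(G)$, and the non-squarefreeness of $s^2+1$ are quoted verbatim from Ghinelli. Your representation-theoretic plan to rederive these from the identity is vague at the decisive points (``pushing forward to $\mathbb{Z}[G/N]$ constrains how $\Sigma$ meets cosets'', ``character degrees are too small'') and would be a substantial detour even if it worked. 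Your argument for item (3) via the Frattini quotient is essentially the paper's (Proposition \ref{prop:regregorder}).

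There are two concrete gaps. First, your proposed route to $3\nmid s+1$ via Benson-type congruences is unmotivated and unfinished, while the actual reason is one line and uses the hypothesis you never exploit again: since $G$ is regular on \emph{lines}, no nonidentity element fixes a line, so condition (b) of Lemma \ref{lem:yoshiara}(iii) fails for $p=2,3$, whence $2,3\nmid s+1$ and $2,3\nmid|G|$ simultaneously. Second, your coset-counting argument for $|F(G)|\ge 2s+3$ does not close. The bound $|F(G)|\ge k^2-k+1$ from a single coset meeting $\Sigma$ in $k$ points is fine, but ``counting over all cosets yields $2s+3$'' is not: the total $\sum_i k_i(k_i-1)$ is minimized by spreading $\Sigma$ thinly, and pigeonhole on $k$ only gives $k\ge |F(G)|/(s^2+1)$, which produces an upper bound on $|F(G)|$, not a lower one. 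The paper instead uses conjugacy classes: for $1\ne x\in O_p(G)$, oddness of $|G|$ forces $x$ and $x^{-1}$ into distinct classes; since $p\nmid s+1$, Lemma \ref{lem:yoshiara}(vi) gives $x^G\not\subseteq\Delta$, so Lemma \ref{lem:xGc} gives $|x^G|,|(x^{-1})^G|\ge s+1$, and hence $|O_p(G)|\ge 1+(s+1)+(s+1)=2s+3$.
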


We are able to use Theorem \ref{thm:regregorder} to show that, if $G$ acts regularly on the point set and the line set of a thick generalized quadrangle of order $s$, then $s > 10^8$ and $|G| > 10^{24} +10^{16} + 10^8 + 1$ (see Proposition \ref{prop:numerical}).  Ghinelli \cite{ghinelli} conjectures that no generalized quadrangle of order $s$, when $s$ is even, has a group of automorphisms acting regularly on points.  Based on Theorem \ref{thm:polarity} and the numerical evidence, we make a more modest conjecture.

\begin{conj}
\label{conj:regreg}
 A generalized quadrangle $\mathcal{Q}$ of order $s > 1$ cannot have a group of automorphisms acting regularly on both the point set and the line set.
\end{conj}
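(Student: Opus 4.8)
To establish Conjecture~\ref{conj:regreg}, the plan is to argue by contradiction: suppose $\mathcal{Q}$ is a generalized quadrangle of order $s > 1$ --- hence thick --- with a group $G$ acting regularly on both $\mathcal{P}$ and $\mathcal{L}$, and push the restrictions of Theorem~\ref{thm:regregorder} until they become inconsistent. First I would record the exact arithmetic of $G$: a generalized quadrangle of order $s$ has $(s+1)(s^2+1)$ points, so regularity forces $|G| = (s+1)(s^2+1)$. By Theorem~\ref{thm:regregorder}, $G$ is solvable, $\gcd(s+1,6)=1$ (so $s$ is even and $s \not\equiv 2 \pmod{3}$), $s^2+1$ is not squarefree, and there is a unique prime $p \mid s^2+1$ with $F(G)=O_p(G)$, $F(G)$ non-cyclic, $|F(G)| \ge 2s+3$, and $|G|$ dividing $p^n\prod_{k=0}^{n-1}(p^{n-k}-1) = p^n\prod_{j=1}^{n}(p^j-1)$, where $p^n$ is the exact power of $p$ dividing $|G|$. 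Since $s$ is even, $p$ is odd; and if $p \mid s+1$ then $p$ divides $(s^2+1)-(s-1)(s+1)=2$, a contradiction, so $p \nmid s+1$ and $p^n$ is exactly the power of $p$ dividing $s^2+1$. Because $F(G)$ is a non-cyclic $p$-group, $p^2$ divides $|F(G)|$ and hence $|G|$, so $n \ge 2$; in particular $p^2 \mid s^2+1$ and $p$ itself witnesses that $s^2+1$ is not squarefree.

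The arithmetic heart of the argument is then to confront $|G| = (s+1)(s^2+1)$ with the divisibility in Theorem~\ref{thm:regregorder}(3). Writing $s^2+1 = p^n q$ with $p \nmid q$, this says that the $p'$-number $(s+1)q$ divides $\prod_{j=1}^{n}(p^j-1)$; together with $|F(G)| \ge 2s+3$ and the standard fact that $C_G(F(G)) = Z(F(G))$ for a solvable group --- so that $G/Z(F(G))$ embeds in $\Aut(F(G))$ --- one obtains simultaneous upper and lower bounds relating $s$, $p$ and $n$. Into this I would feed structural information about $\Aut(P)$ for a non-cyclic $p$-group $P$ of order $p^a$ with $2s+3 \le p^a \le p^n$, together with the constraints (AX1)--(AX2) on the subset $\Sigma \subseteq G$ supplied by Theorem~\ref{thm:regregchar}, in order to pin down the isomorphism type of $F(G)$ (for example, to decide whether it must be abelian, or even elementary abelian). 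In parallel, since $\Sigma$ behaves formally like a relative difference set, I would apply multiplier and self-conjugacy arguments of the type used by Ghinelli to force congruence conditions on $s$ and to bound from above the exact power of $p$ in $s^2+1$, and I would check whether the absence of a polarity (Theorem~\ref{thm:polarity}) can be combined with a double count to yield a further incompatibility.

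The main obstacle is that, as they stand, these conditions are merely extremely restrictive rather than vacuous: the factor $\prod_{j=1}^{n}(p^j-1)$ grows rapidly enough that for each small $n$ there is no elementary numerical contradiction --- already for $n=2$ the resulting inequality $(s+1)(s^2+1) \le p^2(p-1)^2(p+1)$ is compatible with $p$ of size comparable to $s$ --- which is exactly why the authors can only record $s > 10^8$ (Proposition~\ref{prop:numerical}) and leave the statement as a conjecture. Closing the gap therefore seems to require a genuinely new ingredient, most plausibly one of: (i) a proof that $G$ cannot act on $O_p(G)$ as freely as a subgroup of $\GL(n,p)$ while still carrying a set $\Sigma$ satisfying (AX1)--(AX2), which would replace the divisor $p^n\prod_{j=1}^{n}(p^j-1)$ by something much smaller; (ii) a sharper determination of the isomorphism type of $F(G)$ forcing $|F(G)|$ to be small relative to $s$, in conflict with $|F(G)| \ge 2s+3$; or (iii) a new congruence obstruction on $s$ from multiplier theory strong enough to eliminate the remaining residue classes. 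A complete proof would most likely combine such a structural bound on $G/O_p(G)$ with an upper bound on $n$, and hence on $s$, after which the surviving finitely many cases could be dispatched using the numerical constraints together with the lower bound $s > 10^8$.
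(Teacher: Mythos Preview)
The statement you are addressing is \emph{Conjecture}~\ref{conj:regreg}, and the paper does not prove it; it is explicitly left open, with Theorem~\ref{thm:polarity} and the computational bound of Proposition~\ref{prop:numerical} offered only as supporting evidence. So there is no ``paper's own proof'' to compare against.

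Your proposal is not a proof either, and to your credit you say so: you correctly extract all the constraints the paper actually establishes (solvability, $\gcd(s+1,6)=1$, $F(G)=O_p(G)$ non-cyclic with $|F(G)|\ge 2s+3$, the divisibility condition from Theorem~\ref{thm:regregorder}(3)), and you correctly observe that these constraints, while severe, are not known to be inconsistent --- which is precisely why the authors state a conjecture rather than a theorem. The three ``new ingredients'' you list (a sharper bound on $G/O_p(G)$ via the (AX1)--(AX2) structure, a determination of the isomorphism type of $F(G)$, or a multiplier-theoretic congruence) are plausible research directions, but none of them is carried out, and there is no indication that any of them actually succeeds. In particular, your appeal to ``multiplier and self-conjugacy arguments of the type used by Ghinelli'' is speculative: Ghinelli's methods already fed into Lemma~\ref{lem:ghinelli}, and the paper gives no hint that further mileage is available there.

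In short: what you have written is an accurate summary of the known obstructions together with a reasonable wish list for how one might try to proceed, but it is a research outline, not a proof. The conjecture remains open.
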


In this paper, we also study the structure of groups that could act regularly on the point set of a generalized quadrangle, leading to the following result.

\begin{thm}
\label{thm:minnormalsubs}
 Let $\mathcal{Q}$ be a generalized quadrangle of order $(u^2, u^3)$ or $(s,s)$, where $u > 1$ and $s$ is odd with $s+1$ coprime to $3$.  If a group $G$ acts regularly on the point set of $\mathcal{Q}$, then $G$ does not have any nonabelian minimal normal subgroups.
\end{thm}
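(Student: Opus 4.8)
The plan is to argue by contradiction. Suppose $G$ has a nonabelian minimal normal subgroup $N$; then $N \cong T^k$ for some nonabelian finite simple group $T$ and some $k \ge 1$. Since $G$ is regular on the point set $\mathcal{P}$, the normal subgroup $N$ is semiregular, so every $N$-orbit on $\mathcal{P}$ has length $|N| = |T|^k$, and in particular $|T|^k$ divides $|\mathcal{P}| = |G|$, which equals $(u^2+1)(u^5+1)$ in the first case and $(s+1)(s^2+1)$ in the second. By the Feit--Thompson theorem $|T|$ is even, so $|\mathcal{P}|$ is even; in the order-$(u^2,u^3)$ case this already forces $u$ to be odd, because $(u^2+1)(u^5+1)$ is odd when $u$ is even. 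Hence in both cases the parameter $s$ is odd, which will be used repeatedly.

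Next I would bring in the geometry through Benson's lemma, applied to an involution $\theta \in N$ (one exists, as $|T|$ is even). Since $G$ is point-regular, $\theta$ fixes no point, and one checks that the points $P$ with $P \ne P^\theta$ and $P \sim P^\theta$ are exactly the points on a $\theta$-fixed line; as $s$ is odd each such line contributes all of its $s+1$ points, so Benson's lemma becomes $(s+1) f'_\theta \equiv (s+1)(t+1) \pmod{s+t}$, where $f'_\theta$ is the number of lines fixed by $\theta$. These fixed lines are pairwise non-concurrent (a common point would be fixed by $\theta$), hence form a partial spread, so $f'_\theta \le st+1$. Reducing the congruence yields $f'_\theta \equiv 1 \pmod s$ in the order-$(s,s)$ case and $f'_\theta \equiv t+1 \pmod{(s+t)/2}$ in the order-$(u^2,u^3)$ case. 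I would also record that $C_G(\theta)$ permutes the $f'_\theta$ fixed lines while the stabiliser of one of them lies in a line stabiliser, which has order dividing $s+1$; thus $|C_G(\theta)| \le f'_\theta(s+1)$.

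For the order-$(s,s)$ case the two extra hypotheses are precisely what pins down $T$: $s$ odd together with $\gcd(s+1,3)=1$ forces $s \equiv 0$ or $1 \pmod 3$, whence $(s+1)(s^2+1) \equiv 1 \pmod 3$, so $3 \nmid |T|$ and, by the classification of the nonabelian finite simple groups of order not divisible by $3$, $T \cong \Sz(q)$ with $q = 2^{2e+1} \ge 8$. I would then play three facts against one another: (i) the divisibility $q^2(q-1)(q^2+1) \mid (s+1)(s^2+1)$, which, since $v_2(s^2+1)=1$, forces $v_2(s+1) \ge 4e+1$, and which, because every odd prime divisor of $q^2+1$ and of $(s^2+1)/2$ is $\equiv 1 \pmod 4$ while $q-1 \equiv 3 \pmod 4$ has a prime divisor $\equiv 3 \pmod 4$, forces further prime-power divisors of $s+1$; (ii) the Benson constraint $f'_\theta \equiv 1 \pmod s$ with $1 \le f'_\theta \le s^2+1$; and (iii) the fact that the centraliser of an involution in $\Sz(q)$ has order $q^2$, so $q^2 = |C_T(\theta)| \le |C_G(\theta)| \le f'_\theta(s+1)$. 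The order-$(u^2,u^3)$ case would be handled analogously, but without the Suzuki shortcut (since $3$ can divide $|\mathcal{P}|$ when $u \equiv 2 \pmod 3$), instead squeezing the Benson constraint, the partial-spread bound, and the near-coprimality $\gcd(u^2+1,u^5+1) \mid 2$ against the divisibility $|T|^k \mid (u^2+1)(u^5+1)$.

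The main obstacle is exactly this endgame. A purely numerical argument is not available: for example $|\Sz(8)| = 2^6 \cdot 5 \cdot 7 \cdot 13$ already divides $(s+1)(s^2+1)$ for suitable odd $s$ with $s+1$ coprime to $3$, so the contradiction must genuinely exploit the action of $N$ on the quadrangle --- via Benson's lemma, the sizes of involution centralisers, and line stabilisers --- rather than merely the order of $N$. I expect the argument to require a split into subcases according to the value of $f'_\theta$ and the $2$-adic and small-prime structure of $s+1$, and pinning these down cleanly is where the real work lies.
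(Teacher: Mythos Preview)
Your plan has a real gap and a missed simplification. In the $(u^2,u^3)$ case you say the ``Suzuki shortcut'' is unavailable because $3$ can divide $|\mathcal{P}|$ when $u\equiv 2\pmod 3$. In fact that residue class is excluded: if $3\mid(u+1)$ then $3\mid(u^5+1)$ and hence $3\mid|G|$, but Yoshiara's result (the paper's Lemma~\ref{lem:yoshiara}(iii)) then forces $3\mid(s+1)=u^2+1$, which is impossible. So $3\nmid|G|$ in both cases, and $N\cong\Sz(q)^m$ throughout; you should not be trying to handle an arbitrary simple $T$ in the $(u^2,u^3)$ case.

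More seriously, your Benson/centraliser endgame is not just unfinished, it is structurally weaker than what is needed. The inequality $|C_G(\theta)|\le f'_\theta(s+1)$ is vacuous unless you can bound $f'_\theta$ away from $s^2+1$, and the congruence $f'_\theta\equiv 1\pmod s$ alone does not do that. The paper bypasses Benson entirely and uses Yoshiara's $\Delta$-set machinery: for a nontrivial $x$ whose order does not divide $s+1$ one has $x^G\not\subseteq\Delta$ (Lemma~\ref{lem:yoshiara}(vi)), and then $|x^G|\ge\gcd(s,t)+1$ (Lemma~\ref{lem:xGc}). Choosing $x$ in one simple factor with small centraliser in $\Sz(q)$ (of order $q-\sqrt{2q}+1$, or $2q$ for an element of order $4$ in the $(u^2,u^3)$ case) converts this directly into an upper bound for $s+1$ (resp.\ $u^2+1$) in terms of $q$, which, played against the Sylow-$2$ bound $q^{2m}\le 2(s+1)_2$, forces $m=1$ and then, after ruling out $s+1$ (resp.\ $u+1$) a power of $2$ by elementary divisibility in $|\Sz(q)|$, produces an immediate contradiction. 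This two-sided squeeze in $q$ replaces the case split you anticipate; your Benson route does not give the crucial upper bound on $s$ and so cannot close the argument as it stands.
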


It should be noted that Theorem \ref{thm:minnormalsubs} refines part of a result of Yoshiara (see Lemma \ref{lem:yoshiara} (iv)) and is an immediate consequence of Theorems \ref{thm:s} and \ref{thm:u^2}, which consider, respectively, the generalized quadrangles of order $s$ and the generalized quadrangles of order $(u^2, u^3)$.  It is natural to ask about these orders in particular, since there exist infinite families of these orders; see \cite[Chapter 3]{fgq}

This paper is organized as follows.  Section \ref{sect:background} contains background information and sets the stage for the results proved later in this paper.  In Section \ref{sect:lines}, we prove Theorems \ref{thm:regregchar}, \ref{thm:polarity} and \ref{thm:regregorder}.  In Section \ref{secss} we study the properties of a group $G$ that acts regularly on the point set of a generalized quadrangle of order $s$, where $s$ is odd and $s+1$ is coprime to $3$.  Finally, in Section \ref{sect:u^2} we study the properties of a group $G$ that acts regularly on the point set of a generalized quadrangle of order $(u^2, u^3)$, where $u > 1$.

\section{Background}
\label{sect:background}

\subsection{Group theory}
\label{subsub:group}
We begin with group theoretic terminology and notation.  In a finite group $G$, the \textit{Fitting subgroup} is the largest normal nilpotent subgroup of $G$ and is denoted $F(G)$.  Given a prime $p$, the \textit{p-core} $O_p(G)$ of $G$ is the largest normal $p$-subgroup of $G$.  The Fitting subgroup is also the product of the $p$-cores of $G$ for all primes $p$ dividing $|G|$.  A \textit{quasisimple} group is a perfect central extension of a simple group, and a \textit{component} of a group is a subnormal quasisimple group.  The \textit{layer} $E(G)$ of a group $G$ is the subgroup generated by all components.  The \textit{generalized Fitting subgroup} $F^*(G)$ is the subgroup generated by the Fitting subgroup and the layer.  In a solvable group, the generalized Fitting subgroup is the same as the Fitting subgroup.  Each of the Fitting subgroup and the generalized Fitting subgroup contains its own centralizer.  For more information regarding these concepts, see \cite{Aschbacher}. 

We introduce now basic information about the \textit{Suzuki groups}, an infinite family of finite simple groups figuring prominently in later sections of this paper, since they are the only finite simple groups whose orders are coprime to $3$ \cite[Chapter II, Corollary 7.3]{Glauberman}.  The following omnibus lemma contains results about the Suzuki groups $\Sz(q)$, where $q = 2^{2m+1}$ for some natural number $m$.  

\begin{lem}
 \label{lem:suz}
 Let $m$ be a natural number, $q = 2^{2m+1}$, and $G = \Sz(q)$.
 \begin{itemize}
  \item[(i)] \cite[Theorem 7]{suzuki} $|G| = q^2(q-1)(q^2 + 1)$.
  \item[(ii)] \cite[Theorem 3.6]{fingps} $3$ does not divide $|G|$.
  \item[(iii)] \cite[Theorem 3.9]{fingps} If $p$ is an odd prime, then the Sylow $p$-subgroups of $G$ are cyclic.
  \item[(iv)] \cite[Theorem 4.1]{fsg} The maximal subgroups of $G$ are isomorphic to one of the following:
	  \begin{itemize}
	    \item[(a)] $N_G(Q)$, where $Q$ is a Sylow $2$-subgroup of $G$ and $|N_G(Q)| = q^2(q-1)$;
	    \item[(b)] $D_{q-1}$, a dihedral group of order $2(q-1)$;
	    \item[(c)] $C_{q + \sqrt{2q} + 1}:C_4$;
	    \item[(d)] $C_{q - \sqrt{2q} + 1}:C_4$;
	    \item[(e)] $\Sz(q_0)$, where $q = q_0^r$ for some natural number $r$ and $q_0 > 2$.
	  \end{itemize}  
 \item[(v)] The order of the smallest centralizer of an element in $G$ is $q - \sqrt{2q} + 1$, and the size of the largest conjugacy class is $q^2(q-1)(q + \sqrt{2q} + 1)$.
 \item[(vi)] \cite[2.2.5]{fangphd} The order of the centralizer of an element of order $4$ in $G$ is $2q$.
 \item[(vii)] \cite[Section 4.2.4]{fsg} $G$ has trivial Schur multiplier unless $m = 1$, in which case the Schur multiplier is elementary abelian of order $4$.
 \item[(viii)] \cite[Theorem 11]{suzuki} The outer automorphism group of $G$ is isomorphic to the Galois group of $\GF(q)$ and is a cyclic group of order $2m+1$.
 \end{itemize}
\end{lem}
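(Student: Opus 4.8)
The plan is to note first that parts (i)--(iv) and (vi)--(viii) are exactly the statements of the references cited with them, so that the only part requiring an argument is (v), which I will obtain from the list of orders of centralizers of nonidentity elements of $G = \Sz(q)$. As a preliminary I would record the factorization $q^2 + 1 = (q - \sqrt{2q} + 1)(q + \sqrt{2q} + 1)$, valid because $\sqrt{2q} = 2^{m+1}$ and $(q+1)^2 - 2q = q^2 + 1$, and observe that the two factors are odd (as $q$ and $\sqrt{2q}$ are even) and coprime: a common divisor, being odd, divides their difference $2\sqrt{2q} = 2^{m+2}$ and hence equals $1$. Together with (i) this gives $|G| = q^2(q-1)(q - \sqrt{2q}+1)(q + \sqrt{2q}+1)$, and by similar elementary arguments $q^2$, $q-1$, $q-\sqrt{2q}+1$ and $q+\sqrt{2q}+1$ are pairwise coprime.

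The central step is to show that the centralizer orders of the nonidentity elements of $G$ are exactly $q^2$, $2q$, $q-1$, $q+\sqrt{2q}+1$ and $q-\sqrt{2q}+1$. This may simply be quoted from Suzuki's original determination of the conjugacy classes and character table of $\Sz(q)$ in \cite{suzuki}; alternatively it can be derived from the maximal subgroup structure in (iv). For the latter: every cyclic subgroup of $G$, being proper, lies in some maximal subgroup, and, inducting through the subfield subgroups $\Sz(q_0)$ of (iv)(e) and using that $N_G(Q) = Q : C_{q-1}$, $D_{q-1}$, $C_{q+\sqrt{2q}+1}:C_4$ and $C_{q-\sqrt{2q}+1}:C_4$ are Frobenius groups (the complement acting fixed-point-freely on the nilpotent kernel, which is the Sylow $2$-subgroup $Q$ of exponent $4$ in the first case and the cyclic normal subgroup in the others), one obtains that each nonidentity element has order $2$, order $4$, an order dividing $q-1$, or an order dividing exactly one of $q \pm \sqrt{2q}+1$. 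Since the Sylow $2$-subgroups of $\Sz(q)$ form a TI-set, an element $u$ of order $4$ lies in a unique Sylow $2$-subgroup $Q$, so $C_G(u) \le C_G(u^2) = Q$ and $|C_G(u)| = 2q$ by (vi); an involution $t$ similarly has $C_G(t) = Q$ of order $q^2$; and each of $C_{q-1}$, $C_{q+\sqrt{2q}+1}$, $C_{q-\sqrt{2q}+1}$ is a self-centralizing TI-subgroup, so its nonidentity elements have centralizer of order equal to the order of the subgroup.

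Finally I would compare the five numbers. Since $\sqrt{2q} > 2$ we have $q - \sqrt{2q}+1 < q-1$, and $q - \sqrt{2q}+1$ is visibly smaller than $2q$, $q^2$ and $q + \sqrt{2q}+1$; hence the smallest centralizer of an element of $G$ has order $q - \sqrt{2q}+1$. Dividing $|G|$ by this and using the factorization of $q^2+1$, the largest conjugacy class of $G$ has size $q^2(q-1)(q+\sqrt{2q}+1)$, proving (v). I expect the main obstacle to be the central step: if one does not wish to appeal to Suzuki's character-theoretic results, one must establish from (iv) the TI-property and the self-centralizing property of the Sylow $2$-subgroups and of the three kinds of maximal cyclic subgroup, which is routine but a little laborious.
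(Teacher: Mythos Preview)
Your proposal is correct and follows essentially the same approach as the paper: the paper's entire justification for (v) is the single sentence ``Note that Lemma~\ref{lem:suz}~(v) follows from examination of the subgroups listed in Lemma~\ref{lem:suz}~(iv) and noting that a Sylow $2$-subgroup $Q$ of $G$ has a center of size $q$,'' and your argument is a careful unpacking of exactly this idea. Your derivation of the full list of centralizer orders via the TI and Frobenius properties is more detailed than anything the paper provides, but it is the natural way to make the paper's one-line remark rigorous.
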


Note that Lemma \ref{lem:suz} (v) follows from examination of the subgroups listed in Lemma \ref{lem:suz} (iv) and noting that a Sylow $2$-subgroup $Q$ of $G$ has a center of size $q$.  For more about these groups, the reader is referred to the original work of Suzuki \cite{suzuki} or the more recent collection of results in \cite[Section 3]{localsuz}.

\subsection{Generalized quadrangles}

We now discuss some results about generalized quadrangles.  Let $\mathcal{Q}$ be a finite generalized quadrangle with point set $\mathcal{P}$, line set $\mathcal{L}$, and order $(s,t)$.  Given $P, Q \in \mathcal{P}$, the notation $P \sim Q$ indicates that $P$ and $Q$ are distinct collinear points.  The following lemma concerns the parameters $s$ and $t$.

\begin{lem}\cite[1.2.1, 1.2.2, 1.2.3]{fgq}
\label{lem:GQbasics} The following hold:
\begin{itemize}
 \item[(i)] $|\mathcal{P}| = (s+1)(st + 1)$ and $|\mathcal{L}| = (t+1)(st+1)$;
 \item[(ii)] $s + t$ divides $st(s+1)(t+1)$;
 \item[(iii)] $t \le s^2$ and $s \le t^2$.
\end{itemize}
\end{lem}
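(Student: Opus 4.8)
Each of the three parts is classical and in the body of the paper is simply quoted from \cite[1.2.1, 1.2.2, 1.2.3]{fgq}; here is how I would establish them directly. For (i) the plan is a short double count. Fix a point $x$. Since the $t+1$ lines through $x$ each carry $s$ further points and two points lie on at most one line, exactly $s(t+1)$ points are collinear with $x$. For a point $y$ not collinear with $x$, every line through $x$ misses $y$ and hence meets the set of points collinear with $y$ in a unique point, so exactly $t+1$ points are collinear with both $x$ and $y$. Now count the pairs $(y,z)$ with $y \not\sim x$, $z \sim x$ and $z \sim y$: summing over $y$ gives $t+1$ times the number $k$ of points not collinear with $x$, while summing over $z$ gives $st$ times $s(t+1)$, since a point $z \sim x$ lies on $t$ lines avoiding $x$, each contributing $s$ points that are collinear with $z$ but not with $x$, and the one remaining line through $z$ (the line $xz$) contributes no such $y$. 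Hence $k = s^2 t$, so $|\mathcal{P}| = 1 + s(t+1) + s^2 t = (s+1)(st+1)$, and counting incident point--line pairs gives $|\mathcal{P}|(t+1) = |\mathcal{L}|(s+1)$, whence $|\mathcal{L}| = (t+1)(st+1)$.

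For (ii) I would pass to the collinearity graph on $\mathcal{P}$. Using (i) together with the fact that a generalized quadrangle contains no triangle, this graph is strongly regular with parameters $\big((s+1)(st+1),\, s(t+1),\, s-1,\, t+1\big)$, and a direct computation shows that its eigenvalues are $s(t+1)$, $s-1$ and $-(t+1)$, the last two being the roots of $x^2 - (s-t-2)x - (s-1)(t+1) = 0$, whose discriminant is the perfect square $(s+t)^2$. Writing $f$ and $g$ for the multiplicities of $s-1$ and $-(t+1)$, the relations $f + g = |\mathcal{P}| - 1$ and $s(t+1) + (s-1)f - (t+1)g = 0$ (the adjacency matrix has trace $0$) solve to $f = st(s+1)(t+1)/(s+t)$. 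Since $f$ is a nonnegative integer, $s+t$ divides $st(s+1)(t+1)$.

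For (iii), which is Higman's inequality, I would deduce $t \le s^2$ from the Krein conditions for the strongly regular collinearity graph: substituting $k = s(t+1)$, $r = s-1$, $\ell = -(t+1)$ into the Krein inequality $(\ell+1)\big(k + \ell + 2r\ell\big) \le (k+\ell)(r+1)^2$ and cancelling the positive factor $(s-1)(t+1)$ (which is legitimate precisely when $s > 1$) yields $t \le s^2$; applying the same bound to the dual generalized quadrangle, which has order $(t,s)$, gives $s \le t^2$ when $t > 1$, and both inequalities are immediate when the relevant parameter equals $1$. This is the step I expect to be the real obstacle, since it rests on the Krein conditions, whose proof (positive semidefiniteness in the Bose--Mesner algebra of the graph) is not elementary. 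The classical alternative is Higman's original argument, which fixes a point $x$, works inside the set of the $s^2 t$ points not collinear with $x$, double counts the number of points collinear with $x$, $y$ and $z$ over ordered pairs $(y,z)$ of distinct such points, and applies the Cauchy--Schwarz inequality; there the only delicate points are the separate treatment of collinear versus non-collinear pairs $(y,z)$ and the use of the no-triangle property, while parts (i) and (ii) are otherwise entirely routine.
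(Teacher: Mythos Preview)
The paper does not supply a proof of this lemma at all; it merely quotes \cite[1.2.1, 1.2.2, 1.2.3]{fgq}. Your arguments for (i) and (ii) are correct and are precisely the classical double-count and strongly-regular-graph eigenvalue computations that appear in that reference, so there is nothing to compare. For (iii) your Krein-condition derivation is one of the two standard routes to Higman's inequality (the other being the Cauchy--Schwarz count you sketch at the end), and the algebra you give is right: with $k=s(t+1)$, $r=s-1$, $\ell=-(t+1)$ the inequality $(\ell+1)(k+\ell+2r\ell)\le(k+\ell)(r+1)^2$ reduces to $t(s-1)(t+1)\le s^2(s-1)(t+1)$. One small slip: your remark that the degenerate case is ``immediate'' is not correct for $t\le s^2$ when $s=1$, since generalized quadrangles of order $(1,t)$ exist for every $t\ge 1$; in fact \cite[1.2.3]{fgq} states Higman's inequality only under the hypothesis $s>1$ and $t>1$. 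This is a wrinkle in how the lemma is stated in the paper rather than a defect in your method, and it is harmless here since the paper only invokes these bounds for thick quadrangles.
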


The first paper to consider a group $G$ acting regularly on the point set $\mathcal{P}$ of a generalized quadrangle $\mathcal{Q}$ was \cite{ghinelli}.  Note that, if a group $G$ acts regularly on the point set of a generalized quadrangle of order $s$, where $s$ is even, then $|G| = (s+1)(s^2+1)$ is odd, and so $G$ is solvable.  The following lemma collects various results proved by Ghinelli that will be useful later.  

\begin{lem}\cite[Theorem 3.5, Theorem 4.2, Theorem 4.3, Theorem 4.6]{ghinelli}
\label{lem:ghinelli}
 Let $G$ be an automorphism group acting regularly on the point set of a generalized quadrangle of order $s$.  Then the following hold:
 \begin{itemize}
  \item[(i)] If $N$ is an elementary abelian normal $p$-subgroup of $G$, $p \neq 2$, then $|N|$ divides $s^2 + 1$.  In particular, if $p > 2$ and $O_p(G) > 1$, then $p$ divides $s^2 + 1$.
  \item[(ii)] If $s$ is even, then $G$ is solvable, $F(G)$ is a $p$-group with $|F(G)| \ge s+1$, and $F(G)$ is not cyclic.
  \item[(iii)] If $s$ is even, then $Z(G)$ is trivial and $s^2 + 1$ is not squarefree.
 \end{itemize}

\end{lem}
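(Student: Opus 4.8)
\emph{Sketch of the argument.} The plan uses two tools: (a) the classical facts on automorphisms of generalized quadrangles --- the Payne--Thas description of the fixed substructure together with Benson's trace identity, which for an automorphism $\theta$ of a quadrangle of order $s$ gives $g_\theta + (s+1)f_\theta \equiv s^2+1 \pmod{2s}$, where $f_\theta$ is the number of fixed points and $g_\theta$ the number of non-fixed points $P$ with $P \sim P^\theta$ (see \cite{fgq}); and (b) elementary solvable group theory, notably $C_G(F(G)) \le F(G)$ in a solvable group. The common first observation is that, since $N \norml G$ and $G$ is regular on $\mathcal{P}$, the group $N$ acts \emph{semiregularly} on $\mathcal{P}$: all $N$-orbits on points have length $|N|$, so $|N|$ divides $v := (s+1)(s^2+1)$, and every nontrivial element of $N$ is fixed-point-free. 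We also use that $\gcd(s+1, s^2+1)$ divides $2$, so a power of an odd prime dividing $v$ divides exactly one of $s+1$ and $s^2+1$.

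For (i), by the gcd remark it suffices to show $O_p(G)=1$ for every odd prime $p \mid s+1$; suppose not and let $N$ be a nontrivial elementary abelian normal $p$-subgroup with $p \mid s+1$ odd. The set $\Sigma$ of lines fixed setwise by all of $N$ is a partial spread (two concurrent $N$-fixed lines would meet in an $N$-fixed point) and is $G$-invariant (as $N\norml G$); since $G$ is transitive on $\mathcal P$ and the points covered by $\Sigma$ form a $G$-invariant set, either $\Sigma=\emptyset$ or $\Sigma$ is a spread. If $\Sigma$ is a spread, every line off $\Sigma$ is a transversal of $s+1$ distinct $N$-fixed spread lines, so $N$ acts semiregularly on the $s(s^2+1)$ lines outside $\Sigma$, forcing $p \mid s(s^2+1)$ --- impossible for an odd $p$ dividing $s+1$. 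If $\Sigma=\emptyset$, I would feed an element $x\in N$ of order $p$ into Benson's identity: $f_x=0$ gives $g_x\equiv s^2+1\pmod{2s}$, while $g_x=(s+1)a_x+p\,c_x$ with $a_x$ the number of $x$-fixed lines (a partial spread of size at most $s^2+1$, permuted by $G$-conjugation) and the remaining ``$P\sim P^x$'' points falling into $x$-orbits of length $p$; tracking this and using $p\mid s+1$ yields a contradiction. Hence $O_p(G)=1$, and the parenthetical consequence is immediate since a nontrivial normal $p$-subgroup contains a nontrivial elementary abelian characteristic subgroup. I expect the case $\Sigma=\emptyset$ to be the delicate point here.

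For (ii), when $s$ is even, $|G|=(s+1)(s^2+1)$ is odd, so $G$ is solvable (Feit--Thompson) and $C_G(F(G))\le F(G)$. By (i), $O_q(G)=1$ for every odd prime $q\mid s+1$, and since $s+1$ is odd this gives $F(G)=\prod_{q\mid s^2+1}O_q(G)$, with $|F(G)|\mid v$ by semiregularity. To upgrade this to ``$F(G)$ is a noncyclic $p$-group with $|F(G)|\ge s+1$'' I would: (1) run the $\Sigma$-argument of (i) with $N=F(G)$ to conclude $F(G)$ fixes no line; (2) exclude cyclic $F(G)$ --- a cyclic Fitting subgroup is self-centralizing, so $G/F(G)$ embeds in the abelian group $\Aut(F(G))$, and confronting the resulting structure with the parameters of $\mathcal Q$ gives a contradiction; (3) examine the $F(G)$-orbit partition of the collinearity graph --- an equitable partition whose quotient matrix is a non-negative integer matrix with constant row sum $s(s+1)$ and spectrum contained in $\{s(s+1),\,s-1,\,-(s+1)\}$ --- to force $|F(G)|\ge s+1$ and to rule out two distinct primes dividing $|F(G)|$. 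Steps (2)--(3), i.e.\ the sharp lower bound and the single-prime conclusion, are the main obstacle; the rest of the lemma reduces to them and to (i).

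For (iii), still with $s$ even, (ii) gives $F(G)=O_p(G)$ with $p\mid s^2+1$, and since $p\nmid s+1$ the order $|F(G)|$ (a power of $p$ dividing $v$) divides $s^2+1$. If $s^2+1$ were squarefree then $|F(G)|\in\{1,p\}$; as $G\ne 1$ is solvable, $F(G)\ne 1$, so $F(G)\cong C_p$ is cyclic, contradicting (ii); hence $s^2+1$ is not squarefree. For $Z(G)=1$: since $C_G(F(G))\le F(G)$, $Z(G)$ is a $p$-group, and if $Z(G)\ne 1$ pick $z\in Z(G)$ of order $p$; as $z$ is central and fixed-point-free, its set of fixed lines is $G$-invariant, hence (as in (i)) empty or a spread, and Benson's identity for $z$ --- with $f_z=0$ and $p\mid s^2+1$ forcing $g_z\equiv 0\pmod p$ --- together with the fact that every $G$-orbit on lines has length at least $v/(s+1)=s^2+1$ (line stabilizers act semiregularly on the $s+1$ points of their line, so have order dividing $s+1$) pins $g_z$ down to an impossible value; so $Z(G)=1$.
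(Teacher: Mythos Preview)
The paper does not prove this lemma at all: it is stated with a citation to Ghinelli \cite[Theorems 3.5, 4.2, 4.3, 4.6]{ghinelli} and no argument is given. So there is no ``paper's own proof'' to compare your attempt against; the comparison you were asked to make is vacuous here.

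That said, a few remarks on your sketch as a standalone argument. Your overall toolkit (semiregularity of normal subgroups, Benson's trace formula, $C_G(F(G))\le F(G)$ in solvable groups, the fact that the fixed-line set of a normal subgroup is $G$-invariant and hence empty or a spread) is exactly the machinery Ghinelli uses, and the spread case in (i) and the ``$s^2+1$ not squarefree'' deduction in (iii) from (ii) are correct as written. However, you explicitly flag the $\Sigma=\emptyset$ case of (i), steps (2)--(3) of (ii), and the $Z(G)=1$ part of (iii) as the hard parts and do not actually carry them out; what you have written there is a plan, not a proof. In particular, in (i) with $\Sigma=\emptyset$ you have not explained why the decomposition $g_x=(s+1)a_x+p\,c_x$ together with $p\mid s+1$ produces a contradiction, and in (ii) the equitable-partition/quotient-matrix step needs a genuine computation to extract both $|F(G)|\ge s+1$ and the single-prime conclusion. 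These are precisely the places where Ghinelli does real work, so if you intend this as more than a pointer to \cite{ghinelli} you would need to fill them in.
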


Yoshiara was able to generalize many of the results of Ghinelli in \cite{yoshiara}.  The following lemma collects various results proved by Yoshiara that will be useful later in this paper.

\begin{lem}\cite[Lemma 4, Lemma 6, Lemma 7, Theorem 8, Lemma 9, Lemma 10]{yoshiara}
 \label{lem:yoshiara}
 Let $G$ be a group that acts regularly on the point set of a generalized quadrangle of order $(s,t)$.  Assume that $\gcd(s,t) > 1$, and let $P$ be a distinguished point of the generalized quadrangle.  Define $\Delta:= \{g \in G : P^g \sim P \} \cup \{1\}$, and let $\Delta^c$ denote the complement of $\Delta$ in $G$.
 \begin{itemize}
  \item[(i)] If $H \le G$ and $H$ is contained entirely in $\Delta$, then there is a unique line $\ell$ through $P$ such that $H \le G_\ell$.
  \item[(ii)] If $x$ is a nontrivial element of $G$ and $x^G$ denotes the conjugacy class of $x$ in $G$, then $x^G \cap \Delta \neq \varnothing$ and $|x^G \cap \Delta^c|$ is a multiple of $\gcd(s,t)$ (possibly equal to $0$). 
  \item[(iii)] If $p = 2, 3$, then the following are equivalent:
      \begin{itemize}
	\item[(a)] $|G|$ is divisible by $p$.
	\item[(b)] There is an element of order $p$ fixing a line through $P$.
	\item[(c)] $s+1$ is divisible by $p$.
      \end{itemize}
   \item[(iv)] If $s+1$ is coprime to $3$ and $G$ has a nonabelian minimal normal subgroup $N$, then $N$ is the internal direct product $S_1S_2 \dots S_m$ and each $S_i \cong \Sz(q)$ for a fixed $q = 2^{2e+1}$.    
   \item[(v)] If $N$ is a nontrivial normal subgroup of $G$ such that $N$ is entirely contained in $\Delta$, then $|N|$ divides $\gcd(s+1, t^2 - t)$.
   \item[(vi)] If the conjugacy class $x^G$ of a nontrivial element $x \in G$ is contained entirely in $\Delta$, then $|x|$ divides $s+1$. 
 \end{itemize}
\end{lem}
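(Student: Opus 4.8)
The plan is to set up a dictionary translating the combinatorics of $\mathcal{Q}$ into the group $G$ and then prove the six parts in the order (i), (v), (vi), (ii), (iii), (iv), since each later part leans on the earlier ones. Identifying $\mathcal{P}$ with $G$ via $g \leftrightarrow P^g$ as in the introduction, the defining feature of $\Delta$ is that $g \in \Delta \setminus \{1\}$ exactly when $P^g \sim P$; because collinearity is symmetric and $G$-invariant, $\Delta = \Delta^{-1}$, and by Lemma \ref{lem:GQbasics} the $s(t+1)$ nonidentity elements of $\Delta$ partition into $t+1$ blocks (one per line through $P$), each of size $s$. The computation I will use repeatedly is that, since point stabilizers are trivial, any subgroup contained in a line stabilizer $G_\ell$ acts \emph{freely} on the $s+1$ points of $\ell$, so its order divides $s+1$; dually $G_\ell$ acts freely on the lines meeting $\ell$. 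A second translation, obtained by applying the automorphism $g$, is that $gxg^{-1} \in \Delta$ if and only if $(P^g)^x \sim P^g$; as $g$ ranges over $G$ the point $P^g$ ranges over all of $\mathcal{P}$, so membership of the entire class $x^G$ in $\Delta$ is \emph{equivalent} to the statement that $x$ sends every point to a collinear point.

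For (i), I would examine the orbit $P^H$ of a nontrivial $H \le \Delta$: for $h_1,h_2 \in H$ the element $h_1h_2^{-1} \in H \subseteq \Delta$ gives $P^{h_1h_2^{-1}} \sim P$, and applying $h_2$ yields $P^{h_1} \sim P^{h_2}$, so $P^H$ is a clique. Since a generalized quadrangle has no triangles, a clique lies on one line $\ell$, forced to be the line through $P$ and any $P^h$ with $h \ne 1$; and each $h$ sends $P,P^h \in \ell$ to $P^h,P^{h^2} \in \ell$, so $\ell^h = \ell$ and $H \le G_\ell$. Part (v) then follows from (i) and normality: since $gng^{-1}\in N\subseteq\Delta$ for all $g$, the set $N$ lies in the analogue of $\Delta$ at every point, so applying (i) at each point shows $N$ fixes a unique line through each point; these fixed lines partition $\mathcal{P}$, giving an $N$-invariant spread $\mathcal{S}$ of $st+1$ lines. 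From $N \le G_\ell$ we get $|N| \mid s+1$, while no nontrivial element of $N$ fixes a non-spread line (it would fix the intersection of that line with the spread line through one of its points, contradicting freeness), so $N$ acts freely on the $st(t-1)$ non-spread lines skew to $\ell$, giving $|N| \mid st(t-1)$; as $\gcd(s+1,s)=1$ these combine to $|N| \mid \gcd(s+1,t^2-t)$. For (vi) the reformulation says $Q \sim Q^x$ for all $Q$, and it suffices to show $x$ stabilizes a line, as then $\langle x\rangle \le G_\ell$ acts freely and $|x| \mid s+1$; when $|x|\in\{2,3\}$ the orbit $P,P^x,\dots$ is a closed path of pairwise collinear points, hence a clique on one line that $x$ fixes, while the general case requires counting the lines fixed by $x$ and should parallel the argument for (ii).

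Parts (ii), (iii), (iv) complete the picture. The heart of the matter is (ii): that every nontrivial class meets $\Delta$ (equivalently, no fixed-point-free $x$ moves every point to a non-collinear point) and that $|x^G \cap \Delta^c|$ is a multiple of $d := \gcd(s,t)$. I would approach the existence claim through the strongly regular collinearity graph, noting that the number of points sent by $x$ to a neighbour equals $\operatorname{tr}(XA)$, where $X$ is the permutation matrix of $x$ and $A$ the adjacency matrix; since $X$ commutes with $A$ and $x$ is fixed-point-free, an eigenvalue estimate on the two nontrivial eigenspaces (with eigenvalues $s-1$ and $-(t+1)$) should force this count to be positive. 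The divisibility claim I would obtain by grouping the distance-two images of the class into blocks whose sizes are governed by $d$, which is precisely where the standing hypothesis $\gcd(s,t)>1$ enters. Granting (ii), part (iii) is immediate: an element of order $p\in\{2,3\}$ fixing a line forces $p \mid s+1$ and hence $p \mid |G|$; conversely, if $p \mid |G|$ then by (ii) some conjugate $z$ of an order-$p$ element lies in $\Delta$, and since $\Delta=\Delta^{-1}$ the cyclic group $\langle z\rangle$ (namely $\{1,z\}$ or $\{1,z,z^{-1}\}$) lies in $\Delta$, so (i) produces a line through $P$ fixed by $\langle z\rangle$.

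Finally, for (iv), a nonabelian minimal normal subgroup is a direct power $S_1\cdots S_m$ of a single nonabelian simple group $S$; if $3 \mid |S|$ then $3 \mid |G|$, so (iii) gives $3 \mid s+1$, contradicting the hypothesis. Hence $S$ has order coprime to $3$ and is therefore a Suzuki group $\Sz(q)$ by Glauberman's theorem \cite{Glauberman}, with a common $q = 2^{2e+1}$ since the factors are $G$-conjugate and hence isomorphic. The main obstacle throughout is (ii): it is the one place where genuinely quantitative information about the geometry, beyond the triangle-free and clique structure exploited everywhere else, must be brought in, and I expect the positivity of $\operatorname{tr}(XA)$ and the block decomposition controlling divisibility by $\gcd(s,t)$ to be the most delicate steps.
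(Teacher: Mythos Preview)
The paper does not prove this lemma at all: it is quoted verbatim from Yoshiara's paper \cite{yoshiara} and carries no argument here beyond the citation. So there is no ``paper's own proof'' to compare your approach against; what follows is an assessment of your sketch on its own terms.

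Your treatments of (i), (v), (iii), and (iv) are correct and essentially complete. The clique argument for (i) is the standard one; your spread construction in (v) is sound, including the count of $st(t-1)$ non-spread lines skew to a given spread line and the free action of $N$ on them; and (iii), (iv) follow from (i), (ii) exactly as you say, invoking the Glauberman/Thompson classification of simple groups of order coprime to $3$.

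The genuine gaps are in (ii) and in the general case of (vi), and you flag both yourself. For (ii), an ``eigenvalue estimate'' does not by itself force $\operatorname{tr}(XA)>0$: the trace on a nontrivial eigenspace can be negative enough to kill the main term. What actually works is Benson's congruence (essentially the integrality of that trace): for a fixed-point-free automorphism one gets $d_1 \equiv (s+1)(t+1) \pmod{s+t}$, and since $d=\gcd(s,t)$ divides $s+t$ this yields $d_1\equiv 1\pmod d$, hence $d_1\ge 1$, giving $x^G\cap\Delta\neq\varnothing$. The divisibility $d\mid |x^G\cap\Delta^c|$ is \emph{not} an immediate consequence of $d\mid d_2$, because $|x^G\cap\Delta^c|=d_2/|C_G(x)|$; Yoshiara's argument handles this by a more careful double count, and your ``block decomposition'' remark does not yet supply it. For (vi), your clique argument only covers $|x|\in\{2,3\}$; for larger order one cannot conclude that the line through $P$ and $P^x$ is $x$-invariant, and a separate fixed-line count (again of Benson type, now on the line set) is what Yoshiara uses.
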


The following lemma is extremely useful and is used repeatedly in later sections.

\begin{lem}
 \label{lem:xGc}
 Let $G$ be a group that acts regularly on the point set of a generalized quadrangle of order $(s,t)$.  Assume that $\gcd(s,t) > 1$, and let $P$ be a distinguished point of the generalized quadrangle.  Define $\Delta:= \{g \in G : P^g \sim P \} \cup \{1\}$, and let $\Delta^c$ denote the complement of $\Delta$ in $G$.  For any $x \in G$, if $x^G \cap \Delta^c \neq \varnothing$, then $|x^G| \ge \gcd(s,t) + 1$.
\end{lem}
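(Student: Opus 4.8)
The plan is to deduce this statement directly from Lemma \ref{lem:yoshiara}(ii), so the proof will be very short. First I would check that the hypothesis rules out the trivial element: since $1 \in \Delta$ by the definition of $\Delta$, if $x = 1$ then $x^G = \{1\} \subseteq \Delta$, whence $x^G \cap \Delta^c = \varnothing$, contrary to assumption. Thus $x$ is a nontrivial element of $G$, which is precisely the hypothesis needed to invoke Lemma \ref{lem:yoshiara}(ii).

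Next I would apply Lemma \ref{lem:yoshiara}(ii) twice. On the one hand, it gives $x^G \cap \Delta \neq \varnothing$, so $|x^G \cap \Delta| \ge 1$. On the other hand, it tells us that $|x^G \cap \Delta^c|$ is a multiple of $\gcd(s,t)$; since we are assuming $x^G \cap \Delta^c \neq \varnothing$, this multiple is nonzero, and hence $|x^G \cap \Delta^c| \ge \gcd(s,t)$. Because $\Delta$ and $\Delta^c$ partition $G$, the conjugacy class $x^G$ is the disjoint union $(x^G \cap \Delta) \sqcup (x^G \cap \Delta^c)$, so
\[ |x^G| = |x^G \cap \Delta| + |x^G \cap \Delta^c| \ge 1 + \gcd(s,t), \]
which is the desired bound.

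I do not anticipate any real obstacle: the entire combinatorial content sits inside Lemma \ref{lem:yoshiara}(ii), and the only point requiring a moment's care is confirming that the nontriviality hypothesis of that lemma is satisfied, which is immediate from $1 \in \Delta$. The lemma is then recorded separately, despite its brevity, because the conclusion $|x^G| \ge \gcd(s,t) + 1$ is the form in which it is repeatedly applied in the later sections.
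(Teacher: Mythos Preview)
Your proof is correct and follows essentially the same argument as the paper's: both invoke Lemma~\ref{lem:yoshiara}(ii) to get $|x^G \cap \Delta| \ge 1$ and $\gcd(s,t) \mid |x^G \cap \Delta^c|$, then add. Your explicit verification that $x \neq 1$ (needed for the hypothesis of Lemma~\ref{lem:yoshiara}(ii)) is a small but welcome addition that the paper leaves implicit.
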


\begin{proof}
 Assume that $G$ and $x$ are as in the statement of the lemma.  By Lemma \ref{lem:yoshiara} (ii), $|x^G \cap \Delta| \ge 1$ and $|x^G \cap \Delta^c| \equiv 0 \pmod {\gcd(s,t)}$.  Since $|x^G \cap \Delta^c| > 0$ by assumption, the result follows.
\end{proof}

\section{Generalized quadrangles with a group of automorphisms acting regularly on both points and lines}
\label{sect:lines}

Throughout this section, we will assume that $\mathcal{Q}$ is a generalized quadrangle of order $(s,t)$ with a group of automorphisms $G$ that acts regularly on both the point set $\mathcal{P}$ and the line set $\mathcal{L}$.  This immediately implies that $s = t$ and $|G| = (s+1)(s^2+1)$.  

We now prove two propositions that characterize the generalized quadrangles with a group that acts regularly on both points and lines.

\begin{prop}
\label{prop:regregGQ}
Let $\Sigma = \{g_0 = 1, g_1, g_2,..., g_s\}$ be a subset of a group $G$ satisfying the following:
\begin{itemize}
 \item[(AX1)] If $g \in G \backslash \Sigma$, then there exist unique integers $i,j,k$ (with $i \neq j$ and $k \neq j$) such that $g = g_ig_j^{-1}g_k$.
 \item[(AX2)] If $g_ig_j^{-1}g_k \in \Sigma,$ then either $i = j$ or $j = k$.
\end{itemize}
Then there exists a generalized quadrangle $\mathcal{Q}$ such that $\mathcal{Q}$ has order $s$ and $G$ acts regularly on both the point set $\mathcal{P}$ and the line set $\mathcal{L}$ of $\mathcal{Q}$. 
\end{prop}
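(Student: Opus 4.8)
The plan is to construct $\mathcal{Q}$ directly from the group data, using the Cayley-like combinatorial structure that the conditions (AX1), (AX2) are designed to encode. First I would set $\mathcal{P} = G$ (thought of as a set on which $G$ acts by right multiplication) and, for each $i \in \{0, 1, \dots, s\}$, define a ``base line'' $\ell_i = \{g_0 g_i^{-1}\cdot 1, \dots\}$ — more precisely, I want the lines through the identity point $1$ to be the sets $L_i = \{g_i^{-1} g_j : j = 0, \dots, s\}$ or some similar normalization (one should check which normalization makes $\Sigma$ itself the set of points on a single line through $1$; the natural guess is that $\Sigma = \{g_0, \dots, g_s\}$ is the point set of one line, and the other lines through $1$ are the left translates $g_i \Sigma$ for appropriate $i$, or the sets $g \Sigma$ for $g$ ranging over a transversal). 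The full line set $\mathcal{L}$ is then the orbit of these base lines under right multiplication by $G$; since $G$ acts regularly on $\mathcal{P}$ by construction, it will automatically act regularly on any line orbit of the right size, so I would verify $|\mathcal{L}| = (s+1)(s^2+1) = |\mathcal{P}|$ as a consistency check.

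The core of the argument is then to verify the generalized quadrangle axioms for this incidence structure $\mathcal{Q} = (\mathcal{P}, \mathcal{L}, \in)$: (a) two distinct points lie on at most one common line, and (b) given a line $\ell$ and a point $P \notin \ell$, there is exactly one point of $\ell$ collinear with $P$. By the transitivity of $G$ on points (and on lines), it suffices to check these conditions relative to the distinguished point $1$ and/or a distinguished line. Axiom (a) should translate into the statement that if $g$ lies on two distinct lines through $1$, say $g \in g_a\Sigma$ and $g \in g_b\Sigma$ with $a \ne b$, then $g = g_a g_i = g_b g_j$ forces a contradiction — this is essentially the uniqueness clause of a difference-set-type condition and I expect it to follow from (AX2) (rewriting $g_b^{-1} g_a = g_j g_i^{-1}$ and comparing with the allowed forms). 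Axiom (b) is the substantive one: given a line $\ell$ not through $1$ and needing to count points of $\ell$ collinear with $1$, one writes $\ell$ as a right translate, expresses ``$P$ collinear with $1$'' as ``$P^{-1}$ or $P$ lies in some $g_a \Sigma$'', and the condition that there is a \emph{unique} such $P$ on $\ell$ becomes precisely the existence-and-uniqueness statement (AX1) about writing a general group element as $g_i g_j^{-1} g_k$. I would be careful to handle the two regimes separately — when the group element representing the relative position of $\ell$ and $1$ lies in $\Sigma$ versus its complement — since (AX1) governs $G \setminus \Sigma$ and (AX2) governs $\Sigma$, and these correspond exactly to the cases ``$1$ is / is not on a line meeting $\ell$ in a point already forced'' (equivalently, whether $\ell$ passes near $1$ in the quadrangle's adjacency structure).

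The main obstacle I anticipate is bookkeeping: getting the normalization of $\Sigma$, the base lines, and the flag ``point $P$ on line $\ell$'' conditions set up so that the collinearity relation ``$P \sim 1$'' is expressed symmetrically and correctly in terms of $\Sigma$ (the set $\{P : P \sim 1\} \cup \{1\}$ should be $\bigcup_i g_i \Sigma$ or $\Sigma \Sigma^{-1}$-flavored, and one must make sure left/right conventions for the $G$-action don't clash with left/right conventions in $g_i g_j^{-1} g_k$). Once the dictionary is fixed, (AX1) and (AX2) should plug in almost mechanically: (AX1) is ``every non-collinear configuration has a unique completing point'' and (AX2) is ``no spurious collinearities/incidences'' together with the ``at most one line'' axiom. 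I would also double-check thickness is \emph{not} claimed here (the proposition only asserts order $s$, allowing $s = 1$), so no lower bound on $s$ needs to be established, and the degenerate case $s=1$, $G = C_4$, $\Sigma = \{1, g\}$ should be verified as a sanity check at the end.
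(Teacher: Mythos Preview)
Your plan is correct and follows essentially the same route as the paper: the paper takes $\mathcal{P}=G$, $\mathcal{L}=\{\Sigma h : h\in G\}$ with incidence $P_x\,\mathrm{I}\,\ell_y \iff xy^{-1}\in\Sigma$ (so the lines through $1$ are the $\ell_{g_j^{-1}}=\Sigma g_j^{-1}$), and then verifies the generalized-quadrangle axioms exactly as you outline, with (AX2) yielding ``two points on at most one line'' and (AX1) yielding the unique-collinear-point axiom. Your flagged bookkeeping concern is the only real work, and the paper's right-coset normalization resolves it cleanly.
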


\begin{proof}
First, there are precisely $s+1$ elements in $\Sigma$, and $g_ig_j^{-1}g_k \in \Sigma$ only when $i = j$ or $j = k$ by (AX2).  Moreover, the manner in which $g \in G \backslash \Sigma$ can be written as a product $g_ig_j^{-1}g_k$ is unique by (AX1).  This implies that there are $s^2(s+1)$ elements in $G \backslash \Sigma$, since there are $(s+1)$ choices for $j$ and $s$ choices for each of $i$ and $k$, and so $|G| = (s+1) + s^2(s+1) = (s+1)(s^2+1)$.

We define the point set $\mathcal{P}$ to be the elements of the group $G$ (where the point associated with the group element $g$ will be denoted $P_g$), and we define the line set $\mathcal{L}$ to be the sets $\Sigma h$, where $h \in G$ (we denote the line associated with the set $\Sigma h$ by $\ell_h$).  Finally, the point $P_x$ will be incident with the line $\ell_y$ if and only if $xy^{-1} \in \Sigma$.

For any $g \in G$, note that the point $P_x$ is incident with the line $\ell_y$ if and only if $(xg)(yg)^{-1} = xy^{-1} \in \Sigma$ if and only if $P_{xg}$ is incident with $\ell_{yg}$.  Hence each $g \in G$ acts as a collineation of the putative generalized quadrangle $\mathcal{Q}$, and this action is regular on both points and lines. 

Since $|\Sigma| = s+1$, each line contains $s+1$ points.  Moreover, if a point $P_x$ is on a line $\ell_y$, then $xy^{-1} \in \Sigma$.  For a fixed $x \in G$, there are exactly $s+1$ elements $y$ of $G$ such that $xy^{-1} \in \Sigma$, so each point is on exactly $s+1$ lines.

Let $P_x$ and $P_y$ be distinct points of $\mathcal{Q}$, and assume that both points are on the lines $\ell_g$ and $\ell_h$.  This means that there exist integers $i,j,m,n$ such that $x = g_ih = g_mg$ and $y = g_jh = g_ng$.  Thus $xy^{-1} = g_ig_j^{-1} = g_mg_n^{-1}$, which implies that $g_i = g_mg_n^{-1}g_j \in \Sigma$.  By (AX2), this means that either $g_n = g_m$ or $g_n = g_j$.  If $g_n = g_m$, then $y = g_ng = g_mg = x$, a contradiction to distinctness.  Thus $g_n = g_j$, which implies that $g = h$.  Therefore, distinct points are incident with at most one line.

Let $\ell_x$ and $\ell_y$ be distinct lines of $\mathcal{Q}$ and assume that they intersect in at least one point.  This means that $g_ix = g_jy$ for some $0 \le i,j \le s,$ and furthermore $xy^{-1} = g_i^{-1}g_j$.  Assume first that $xy^{-1} \notin \Sigma.$  This means that $xy^{-1} = g_0g_i^{-1}g_j$, and by (AX1), $i$ and $j$ are unique.  In this case, the lines $\ell_x$ and $\ell_y$ meet in exactly one point.  Assume now that $xy^{-1} \in \Sigma.$  The lines $\ell_x$ and $\ell_y$ are distinct, so $xy^{-1} \neq 1 = g_0$.  Thus $xy^{-1} = g_j$ for some $j >0$, and $x = g_jy$.  Note that the size of the intersection of $\Sigma g_jy$ and $\Sigma y$ is the same as the size of the intersection of $\Sigma g_j$ and $\Sigma$.  Suppose that $g_i$ and $g_k$ are such that $g_ig_j = g_k$.  Then $g_j = g_i^{-1}g_k = g_0g_i^{-1}g_k$.  By (AX2), either $g_i = 1$ or $g_i = g_k$.  If $g_i = 1$, then $g_k = g_j$, and the two lines intersect in a unique point.  If $g_i = g_k$, then $g_j = 1$, a contradiction to the distinctness of the lines.  In any case, two distinct lines are mutually incident with at most one point.

Let $P_x$ be a point that is not incident with the line $\ell_y$.  This is equivalent to $xy^{-1} \notin \Sigma.$  By (AX1), there exist unique $i,j,k$ such that $xy^{-1} = g_ig_j^{-1}g_k$, i.e., there exists a unique $i$ such that $\Sigma y \cap \Sigma g_i^{-1}x$ is nonempty.  Note that the lines through the point $P_1$ are precisely the lines $\ell_{g_i^{-1}}$; hence the lines through $P_x$ are precisely the lines $\ell_{g_i^{-1}x}$.  Hence there is a unique line $\ell_{g_i^{-1}x}$ through $P_x$ such that $\ell_{g_i^{-1}x}$ and $\ell_y$ are incident with a common point.  Therefore, $\mathcal{Q}$ is a generalized quadrangle such that $G$ acts regularly on both the points and lines of $\mathcal{Q}$, as desired.   
\end{proof}

\begin{prop}
\label{prop:regregGQ2}
Let $G$ be a group that acts regularly on both the point set $\mathcal{P}$ and the line set $\mathcal{L}$ of a finite generalized quadrangle $\mathcal{Q}$ of order $s$.  Then there exists a subset $\Sigma$ of elements of $G$ satisfying (AX1) and (AX2) of Proposition \ref{prop:regregGQ}. 
\end{prop}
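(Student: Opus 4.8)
The plan is to run the construction of Proposition \ref{prop:regregGQ} in reverse: starting from the generalized quadrangle $\mathcal{Q}$ of order $s$ with $G$ acting regularly on both $\mathcal{P}$ and $\mathcal{L}$, I identify $\mathcal{P}$ with $G$ by fixing a base point $P_1 = P$, and I identify $\mathcal{L}$ with $G$ by fixing a base line — most conveniently one of the $s+1$ lines through $P$. Write $\ell_1, \ell_2, \dots$ for the lines, with $\ell_1$ the base line, and define $\Sigma$ to be the set of group elements corresponding to the $s+1$ points incident with $\ell_1$; since $P$ is on $\ell_1$, the identity $g_0 = 1$ lies in $\Sigma$, and $|\Sigma| = s+1$. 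The incidence relation transported to $G$ then reads: $P_x$ is incident with the line labelled by $h \in G$ if and only if $xh^{-1} \in \Sigma$ (exactly as in the construction), once one checks that the labelling of lines by $G$ can be chosen compatibly, i.e. that the line through $P$ labelled $g_i^{-1}$ is the $i$-th line through $P$ and that $G$-translation carries line labels as in Proposition \ref{prop:regregGQ}. This compatibility is the bookkeeping core of the proof and should be set up carefully at the outset.

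With $\Sigma$ in hand, the two axioms translate into geometric statements about $\mathcal{Q}$. For (AX2): if $g_i g_j^{-1} g_k \in \Sigma$ with $i \neq j$ and $j \neq k$, I want a contradiction. The element $g_j$ corresponds to a point $P_{g_j}$ on $\ell_1$; the elements $g_i, g_k$ correspond to points on $\ell_1$ as well. I read $g_i g_j^{-1} g_k = g_m$ for some $g_m \in \Sigma$ as saying the lines $\Sigma g_j^{-1} g_k$ and $\Sigma$ share the points $g_i$ and $g_m$ (or an analogous pair of points on two lines); if $i \ne j$ and $j \ne k$ these two lines are distinct, so they would meet in two points, violating the defining property that two points lie on at most one line (equivalently two lines meet in at most one point). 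Tracking which lines and which points are involved — using that $\Sigma$ is literally the point set of $\ell_1$ and that $\Sigma y$ is the point set of the translate $\ell_1^{y}$ — is exactly the reverse of the corresponding paragraphs in the proof of Proposition \ref{prop:regregGQ}, so I would mirror that argument.

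For (AX1): given $g \in G \setminus \Sigma$, the point $P_g$ is not on $\ell_1$, so by the generalized quadrangle axiom there is a unique point $P_g'$ on $\ell_1$ collinear with $P_g$; say $P_g'$ corresponds to $g_j \in \Sigma$ and the line $P_g P_g'$ corresponds (after translation by $g_j^{-1}$, bringing $P_g'$ to a line through $P$) to a translate $\Sigma g_j^{-1} g$ of a line through $P$. Then $g$ lies on that line together with the points of $\Sigma g_j^{-1} g$, and writing the incidence of $P_g$ and the points of $\ell_1 = \Sigma$ with this common line gives $g = g_i g_j^{-1} g_k$ for suitable $i, k$; existence of the representation comes from "$P_g$ collinear with some point of $\ell_1$'', and uniqueness of $j$ is precisely the generalized quadrangle axiom, while uniqueness of $i$ and $k$ given $j$ comes from $G$ acting regularly (so labels are unambiguous) together with "two points on at most one line.'' I expect the main obstacle to be purely organizational rather than mathematical: pinning down a consistent identification of $\mathcal{L}$ with $G$ so that every translate of a line through $P$ appears in the form $\Sigma h$ and so that the computations of Proposition \ref{prop:regregGQ} can be read backwards verbatim; once that dictionary is fixed, (AX1) and (AX2) are immediate translations of the two incidence axioms of a generalized quadrangle, and counting $|\Sigma| = s+1$ confirms the cardinalities match.
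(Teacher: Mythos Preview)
Your proposal is correct and follows essentially the same approach as the paper: fix a base point to identify $\mathcal{P}$ with $G$, take $\Sigma$ to be the set of group elements labeling the points on a fixed line $\ell$ through $P_1$, use line-regularity to label lines as $\ell_h = \Sigma h$, and then read (AX1) and (AX2) off the generalized quadrangle axioms. One minor bookkeeping point: in your (AX1) sketch you label the unique collinear point on $\ell_1$ by $g_j$ (the middle index), whereas the decomposition $g = g_i g_j^{-1} g_k$ naturally has the collinear point as the rightmost index $g_k$ (since lines through $P_{g_k}$ are the translates $\ell_{g_j^{-1} g_k}$ of the lines $\ell_{g_j^{-1}}$ through $P_1$); this is only a relabeling issue and does not affect the argument.
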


\begin{proof}
Assume that such a generalized quadrangle $\mathcal{Q}$ exists.  Fix a point $P_1 \in \mathcal{P}$ and identify it with the identity element $1 \in G$.  Since $G$ acts regularly on $\mathcal{P}$, for each $x \in G$, we may define $P_x := P_1^x$.  Let $\ell$ be a line incident with $P_1$, define $\Sigma:= \{g \in G : P_g \text{ is incident with }\ell \}$, and label the points of $\Sigma$ such that $\Sigma = \{g_0 = 1, g_1,..., g_s \}$.  Note that $G$ acts regularly on the lines of $\mathcal{Q}$, so we may identify the lines of $\mathcal{Q}$ with the sets of elements of the form $\Sigma h$, where $h \in G$.  Without a loss of generality we may denote the line $\Sigma h$ by $\ell_h$, and so the line $\ell$ is identified as $\ell_1$.

Assume first that $g \notin \Sigma.$  This means that $P_g$ is not incident with $\ell_1$.  Since $\mathcal{Q}$ is a generalized quadrangle, there exists a unique point $P_{g_k}$ on $\ell_1$ such that $P_g$ is collinear with $P_{g_k}$.  Since $G$ acts regularly on both points and lines, $P_1$ is incident with precisely the lines $\ell_{g_j^{-1}}$, $0 \le j \le s$, which implies that the lines incident with $P_{g_k}$ are of the form $\ell_{g_j^{-1}g_k}.$  The point $P_g$ is on a unique line of that form, and so $g$ may be written uniquely as $g = g_ig_j^{-1}g_k$, and $\Sigma$ satisfies (AX1).

Now suppose that the product $g_ig_j^{-1}g_k \in \Sigma$, and assume that $i \neq j$.  Since $P_{g_ig_j^{-1}g_k}$ is incident with $\ell_1$, we have that $P_{g_ig_j^{-1}}$ is incident with line $\ell_{g_k^{-1}}$.  Since $i \neq j$, $g_ig_j^{-1} \neq 1$, so $P_{g_ig_j^{-1}}$ is distinct from $P_1$ but collinear with $P_1$.  On the other hand, $P_{g_ig_j^{-1}}$ is also on line $\ell_{g_j^{-1}}$, which is also incident with $P_1$.  Since two points are incident with at most one line, we conclude that $\ell_{g_j^{-1}} = \ell_{g_k^{-1}}$.  Therefore, $j = k$, and $\Sigma$ satisfies (AX2), as desired.      
\end{proof}

We can now prove Theorem \ref{thm:regregchar}.

\begin{proof}[Proof of Theorem \ref{thm:regregchar}]
This follows immediately from Propositions \ref{prop:regregGQ} and \ref{prop:regregGQ2}.
\end{proof}

\begin{obs}
\label{obs:thin}
The cyclic group $C_4$ satisfies the conditions of Proposition \ref{prop:regregGQ}. 
\end{obs}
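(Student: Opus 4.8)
The plan is to exhibit an explicit two-element subset $\Sigma$ of $C_4$ and verify the two axioms directly. First I would note that the relation $|G| = (s+1)(s^2+1)$, established at the start of the proof of Proposition \ref{prop:regregGQ}, forces $4 = (s+1)(s^2+1)$ and hence $s = 1$; consequently the set $\Sigma$ in Proposition \ref{prop:regregGQ} must have exactly two elements, $\Sigma = \{g_0 = 1, g_1\}$. Writing $C_4 = \langle a \rangle$ with $a^4 = 1$, the obvious candidate is $\Sigma = \{1, a\}$, i.e.\ $g_1 = a$, and the remaining elements of $G \backslash \Sigma$ are $a^2$ and $a^3$.

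Next I would check (AX1). With $s = 1$ the indices range over $\{0,1\}$, and the constraints $i \neq j$, $k \neq j$ leave precisely two admissible triples: $(i,j,k) = (1,0,1)$, giving $g_1 g_0^{-1} g_1 = a^2$, and $(i,j,k) = (0,1,0)$, giving $g_0 g_1^{-1} g_0 = a^{-1} = a^3$. Thus each element of $G \backslash \Sigma$ is represented exactly once as a product $g_i g_j^{-1} g_k$ of the required shape, so (AX1) holds. For (AX2) I would simply run through the eight products $g_i g_j^{-1} g_k$ with $i,j,k \in \{0,1\}$; the only two such products with $i \neq j$ and $k \neq j$ are the elements $a^2$ and $a^3$ computed above, both lying outside $\Sigma$, so whenever $g_i g_j^{-1} g_k \in \Sigma = \{1,a\}$ we must have $i = j$ or $j = k$.

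There is no genuine obstacle here: the group has order $4$, $s$ is forced to be $1$, and both conditions reduce to a finite verification over a handful of triples. The only points requiring mild care are aligning the index conventions of Proposition \ref{prop:regregGQ} (in particular that $g_0 = 1$ and that the admissible triples in (AX1) are exactly those with $i \neq j \neq k$) and recording explicitly that $s = 1$, so that this example corresponds to the unique thin generalized quadrangle of order $(1,1)$.
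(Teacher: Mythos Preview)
Your proposal is correct and follows the same approach as the paper: the paper simply states that with $C_4 = \langle x \rangle$ the set $\Sigma = \{1, x\}$ satisfies the conditions, while you carry out the explicit verification of (AX1) and (AX2) that the paper leaves to the reader.
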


Indeed, if $C_4 = \langle x \rangle$, then the set $\Sigma = \{1, x \}$ satisfies all three conditions, and hence the unique thin generalized quadrangle of order $(1,1)$ has a group of automorphisms that acts regularly on both its point set and its line set.

Henceforth in this section, we fix a point $P \in \mathcal{P}$, and we define $\Delta:= \{g \in G : P^g \sim P \} \cup \{1\}.$  

\begin{lem}
\label{lem:orderGreg}
If $s > 1$, then $|G|$ is coprime to both $2$ and $3$. 
\end{lem}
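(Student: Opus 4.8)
The plan is to show separately that $2 \nmid |G|$ and $3 \nmid |G|$, in each case using Lemma~\ref{lem:yoshiara}~(iii) to reduce the question of divisibility by $p \in \{2,3\}$ to the existence of an element of order $p$ fixing a line through $P$, and then deriving a contradiction from the combined regularity on points \emph{and} lines. First I would observe that since $G$ acts regularly on both $\mathcal{P}$ and $\mathcal{L}$ and $s = t$, we have $\gcd(s,t) = s > 1$, so Lemma~\ref{lem:yoshiara} applies with $\gcd(s,t) = s$. Set $\Delta = \{g \in G : P^g \sim P\} \cup \{1\}$ as in the excerpt, so that $|\Delta| = 1 + s(s+1) \cdot \tfrac{1}{1}$ — more precisely $|\Delta| = s^2 + s + 1$ since $P$ is collinear with $s(s+1)$ other points (the $s+1$ lines through $P$, each with $s$ further points).

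The key structural input is that regularity on lines lets us identify the lines through $P$ with a concrete subset of $G$: by Proposition~\ref{prop:regregGQ2} there is a subset $\Sigma = \{g_0 = 1, g_1, \dots, g_s\}$ satisfying (AX1) and (AX2), the lines through $P = P_1$ are precisely the $\ell_{g_i^{-1}}$, and $\Delta = \Sigma \Sigma^{-1} = \{g_i g_j^{-1} : 0 \le i,j \le s\}$ (the nonidentity elements of $\Delta$ are exactly those of the form $g_i g_j^{-1}$ with $i \ne j$). Now suppose $p \mid |G|$ for $p \in \{2,3\}$. By Lemma~\ref{lem:yoshiara}~(iii) there is an element $x \in G$ of order $p$ fixing some line $\ell$ through $P$. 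Since $G$ is regular on lines, the stabilizer $G_\ell$ is trivial — but $x \in G_\ell$ and $x \ne 1$, a contradiction. This already kills divisibility by $2$ and by $3$ simultaneously, so in fact the proof is very short once Lemma~\ref{lem:yoshiara}~(iii) is invoked.

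The only point requiring care — and the main obstacle — is verifying that the hypotheses of Lemma~\ref{lem:yoshiara}~(iii) are genuinely met, namely that $\gcd(s,t) > 1$: this is where $s > 1$ is used, and it fails precisely for the $C_4$ example of Observation~\ref{obs:thin}, so the hypothesis $s > 1$ cannot be dropped. I would state this reduction explicitly: $G$ regular on points \emph{and} lines forces $t = s$ (Lemma~\ref{lem:GQbasics}~(i) together with $|\mathcal{P}| = |\mathcal{L}|$), hence $\gcd(s,t) = s$, which is $> 1$ by hypothesis, so Lemma~\ref{lem:yoshiara} is available. Then conclude: if $2 \mid |G|$ (resp.\ $3 \mid |G|$), Lemma~\ref{lem:yoshiara}~(iii)(a)$\Rightarrow$(b) produces a nonidentity element of $G$ fixing a line, contradicting regularity on $\mathcal{L}$; hence $\gcd(|G|, 6) = 1$. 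One could equivalently phrase the contradiction via (iii)(c): $2 \mid |G|$ would force $2 \mid s+1$, and then an element of order $2$ fixes a line — same contradiction. I expect the write-up to be only a few lines.
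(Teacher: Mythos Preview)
Your proposal is correct and follows exactly the paper's approach: the paper's proof is the single line ``This follows directly from Lemma~\ref{lem:yoshiara}~(iii),'' and your argument is precisely the unpacking of that citation (if $p\in\{2,3\}$ divided $|G|$, condition (b) would give a nonidentity element fixing a line, contradicting regularity on $\mathcal{L}$). The discussion of $\Sigma$ and $\Delta$ in your write-up is unnecessary for the argument and can be dropped.
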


\begin{proof}
This follows directly from Lemma \ref{lem:yoshiara} (iii).
\end{proof}

\begin{lem}
\label{lem:FGregreg}
If $s > 1$, then $G$ is solvable and $F(G) = O_p(G)$, where $p$ is an odd prime dividing $s^2 + 1$. 
\end{lem}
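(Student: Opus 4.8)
The plan is to leverage the numerical facts already collected, together with Lemma~\ref{lem:yoshiara}, to pin down the structure of the Fitting subgroup. First I would establish solvability. Since $G$ acts regularly on both points and lines, $|G| = (s+1)(s^2+1)$. If $G$ had a nonabelian minimal normal subgroup $N$, then because $s+1$ is coprime to $3$ by Lemma~\ref{lem:orderGreg}, Lemma~\ref{lem:yoshiara}~(iv) applies (with $t = s$, so $\gcd(s,t) = s > 1$): $N$ is a direct product of copies of a Suzuki group $\Sz(q)$ with $q = 2^{2e+1}$. But $\Sz(q)$ has even order by Lemma~\ref{lem:suz}~(i), contradicting Lemma~\ref{lem:orderGreg}. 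Hence every minimal normal subgroup of $G$ is elementary abelian, and since $|G|$ is coprime to $3$ I can invoke the Feit--Thompson theorem (or simply note $|G|$ is odd, as $2 \nmid |G|$) to conclude $G$ is solvable. In a solvable group $F^*(G) = F(G)$, and $F(G)$ contains its own centralizer.

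Next I would show $F(G)$ is a $p$-group for a single prime $p$. Recall $F(G) = \prod_q O_q(G)$ over primes $q \mid |G|$. By Lemma~\ref{lem:ghinelli}~(i), since $|G|$ is odd, for every prime $q \mid |G|$ with $O_q(G) > 1$ we get $q \mid s^2 + 1$. Now I want to argue that only one such prime can occur. The key constraint is $\gcd(s+1, s^2+1)$: since $s^2 + 1 = (s+1)(s-1) + 2$, we have $\gcd(s+1, s^2+1) \mid 2$, and as $s+1$ is coprime to $2$, in fact $\gcd(s+1, s^2+1) = 1$. So the primes dividing $s^2+1$ are disjoint from those dividing $s+1$, and $|G| = (s+1)(s^2+1)$ gives a clean factorization of the order into the $s+1$ part and the $s^2+1$ part. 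The plan for ruling out two primes dividing $|F(G)|$ is to use Lemma~\ref{lem:xGc} together with the fact that a nontrivial normal $q$-subgroup contained in $\Delta$ has order dividing $\gcd(s+1, s^2-s)$ by Lemma~\ref{lem:yoshiara}~(v); combined with the coprimality just noted, a $q$-subgroup with $q \mid s^2+1$ cannot lie inside $\Delta$, so its nontrivial elements have conjugacy classes meeting $\Delta^c$, forcing large class sizes — but elements of $O_q(G) \le F(G)$ have centralizer containing $F(G)$, and if two distinct primes $p, q$ both divided $|F(G)|$ then $O_p(G)$ and $O_q(G)$ centralize each other, so $|C_G(x)| \ge |O_p(G)| \cdot |O_q(G)|$ for $x$ a nontrivial $p$-element, and I would derive a contradiction with the conjugacy-class lower bound of $s+1$ from Lemma~\ref{lem:xGc}.

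I expect the main obstacle to be exactly this last step: cleanly forcing $F(G)$ to be a single $p$-group rather than merely knowing each $O_q(G)$ has $q \mid s^2+1$. One needs to be careful that the argument via centralizer sizes actually closes, since $s^2+1$ could in principle be divisible by several primes and the product $|O_p(G)||O_q(G)|$ need not immediately exceed the bound $\gcd(s,t)+1 = s+1$. It may be necessary instead to use that the nontrivial elements of $O_p(G)$ for $p \mid s^2+1$ cannot all lie in $\Delta$ (again by Lemma~\ref{lem:yoshiara}~(v) and coprimality), and then apply Lemma~\ref{lem:xGc} to such an element $x \in O_p(G)$, whose centralizer contains $O_q(G)$ as well, yielding $|x^G| \le |G|/(|O_p(G)| |O_q(G)|) $ on one hand and $|x^G| \ge s+1$ on the other — a tension that should be resolvable using $|G| = (s+1)(s^2+1)$ and the divisibility $|O_p(G)||O_q(G)| \mid s^2+1$. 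Once $F(G) = O_p(G)$ with $p \mid s^2+1$ and $p$ odd is established, the proof is complete; the remaining parts of Theorem~\ref{thm:regregorder} (non-cyclicity, the bound $|F(G)| \ge 2s+3$, etc.) would be handled in separate lemmas.
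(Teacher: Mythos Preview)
Your route is far more laborious than the paper's, and the hard step you flag does not close as written. The paper's argument is essentially one line: Lemma~\ref{lem:orderGreg} gives $|G|=(s+1)(s^2+1)$ coprime to $2$, which forces $s$ to be \emph{even}; then Lemma~\ref{lem:ghinelli}~(ii) (Ghinelli's result for even $s$) hands you directly that $G$ is solvable and $F(G)$ is a $p$-group, and Lemma~\ref{lem:ghinelli}~(i) says this $p$ divides $s^2+1$. You never make the observation that $s$ is even, and so you never invoke the ready-made Lemma~\ref{lem:ghinelli}~(ii); instead you attempt to reprove its content.

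Your proposed single-prime argument has a genuine gap. The bound $|C_G(x)|\ge |O_p(G)|\,|O_q(G)|$ for a nontrivial $p$-element $x$ is not valid in general: $O_q(G)$ centralizes $x$, but $O_p(G)$ need not unless $x\in Z(O_p(G))$. Even granting that, the inequality you arrive at, $|O_p(G)|\,|O_q(G)|\le s^2+1$, is automatically true (both factors divide $s^2+1$ and are coprime), so there is no contradiction. A correct version of this step, which in fact appears later in the paper as Lemma~\ref{lem:OpG}, uses instead that for $1\neq x\in O_q(G)$ one has $x^G\subseteq O_q(G)$, so $|x^G|\le |O_q(G)|-1$; since $|O_p(G)|\,|O_q(G)|\le s^2+1$, the smaller of the two has order $<s+1$, whence $|x^G|<s$, forcing $x^G\subseteq\Delta$ by Lemma~\ref{lem:yoshiara}~(ii) and contradicting Lemma~\ref{lem:yoshiara}~(vi). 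But all of this is unnecessary here once you notice $s$ is even.
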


\begin{proof}
This follows immediately from Lemma \ref{lem:orderGreg} and Lemma \ref{lem:ghinelli} (iii).
\end{proof}

\begin{lem}
\label{lem:Opsize}
If $s > 1$ and $F(G) = O_p(G)$, then $|O_p(G)| \ge 2s+3$. 
\end{lem}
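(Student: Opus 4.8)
The plan is to bound $|N|$ from below, where $N := F(G) = O_p(G)$, by partitioning $N$ into conjugacy classes of $G$. Throughout, $s = t > 1$, so $\gcd(s,t) = s > 1$ and Lemmas \ref{lem:yoshiara} and \ref{lem:xGc} are available. First I would record that $|G| = (s+1)(s^2+1)$ is odd by Lemma \ref{lem:orderGreg}; hence $s+1$ is odd, $\gcd(s+1, s^2+1) = \gcd(s+1, 2) = 1$, and since $p \mid s^2 + 1$ (Lemma \ref{lem:FGregreg}) we get $p \nmid s+1$, so $\gcd(|N|, s+1) = 1$ (as $|N|$ is a power of $p$). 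Also $N \neq 1$, since $F(G)$ contains its own centralizer while $|G| > 1$.

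Next I would show that every nontrivial conjugacy class of $G$ contained in $N$ has size at least $s+1$. Since $N$ is normal in $G$, it is a union of $G$-classes. Suppose some nontrivial class $x^G$ with $x \in N$ were contained in $\Delta$; then Lemma \ref{lem:yoshiara}(vi) would force $|x|$ to divide $s+1$, but $|x|$ is a power of $p$ and $\gcd(p, s+1) = 1$ --- a contradiction. Hence $x^G \cap \Delta^c \neq \varnothing$, and Lemma \ref{lem:xGc} gives $|x^G| \ge \gcd(s,t) + 1 = s+1$. Writing $\nu$ for the number of nontrivial $G$-classes contained in $N$, this yields $|N| = 1 + \sum_{C} |C| \ge 1 + \nu(s+1)$, the sum being over the nontrivial $G$-classes $C \subseteq N$.

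It remains to prove $\nu \ge 2$, which is the crux. Suppose $\nu = 1$; then, since $N$ is the disjoint union of its $G$-classes, $N \setminus \{1\}$ is a single $G$-class $C$, so $|C| = |N| - 1 = p^n - 1$, where $p^n = |N|$. On the one hand $|C| = [G : C_G(x)]$ divides $|G|$, which is odd; on the other hand $p^n - 1$ is even and positive (as $p$ is odd and $N \neq 1$) --- a contradiction. Therefore $\nu \ge 2$, and $|N| \ge 1 + 2(s+1) = 2s + 3$, as desired.

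The main obstacle is precisely obtaining the multiplicity $\nu \ge 2$ rather than merely $\nu \ge 1$: a single bad class would only reprove the weaker bound $|N| \ge s+1$ already available from Lemma \ref{lem:ghinelli}(ii), and it is the parity clash between $|N| - 1$ and the odd order of $G$ that rules this out and upgrades the bound to $2s+3$.
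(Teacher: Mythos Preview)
Your proof is correct and follows the same architecture as the paper's: partition $N = O_p(G)$ into $G$-conjugacy classes, use Lemma~\ref{lem:yoshiara}(vi) together with Lemma~\ref{lem:xGc} to show each nontrivial class has size at least $s+1$, and then establish that there are at least two such classes. The only difference lies in this last step. The paper exhibits two distinct classes explicitly: in a group of odd order no nontrivial element is conjugate to its inverse (if $x^g = x^{-1}$ then $g^2 \in C_G(x)$, and since $g$ has odd order $g \in \langle g^2 \rangle \le C_G(x)$, forcing $x = x^{-1}$), so $x^G$ and $(x^{-1})^G$ are distinct nontrivial classes inside $N$. You instead suppose $\nu = 1$ and derive a parity contradiction from the observation that the unique nontrivial class would have size $p^n - 1$, which is even (as $p$ is odd) yet must divide the odd integer $|G|$. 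Both arguments hinge on $|G|$ being odd; yours is a touch slicker in that it avoids the element-level calculation, while the paper's version has the minor advantage of naming the two witnessing classes explicitly.
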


\begin{proof}
Let $1 \neq x \in O_p(G).$  Suppose that $x$ is conjugate in $G$ to $x^{-1}$.  Then there exists $g \in G$ such that $x^g = x^{-1}$, and so $x^{g^2} = x$.  Since $|G|$ has odd order by Lemma \ref{lem:orderGreg}, $g$ has odd order, and there is some $n \in \mathbb{N}$ such that $g = (g^2)^n$, which means that $g$ centralizes $x$.  However, this means that $x = x^{-1}$, and $x$ must have order dividing $2$, a contradiction.  Hence $x$ and $x^{-1}$ cannot be conjugate in $G$, and $O_p(G)$ has at least three conjugacy classes: $1^G$, $x^G$, and $(x^{-1})^G$.

By Lemma \ref{lem:yoshiara} (vi) we know that neither $x^G$ nor $(x^{-1})^G$ is contained entirely in $\Delta$.  Therefore, by Lemma \ref{lem:xGc}, \[|O_p(G)| \ge |1^G| + |x^G| + |(x^{-1})^G| \ge 1 + (s+1) + (s+1) = 2s+3,\] as desired. 
\end{proof}

We are now ready to prove Theorem \ref{thm:polarity}.

\begin{proof}[Proof of Theorem \ref{thm:polarity}]
Assume that $\mathcal{Q}$ has a polarity.  By \cite[1.8.2]{fgq}, $2s$ is a square, which means that $s^2 + 1$ factors into $(s + \sqrt{2s} + 1)(s - \sqrt{2s} + 1)$.  By Lemmas \ref{lem:FGregreg} and \ref{lem:Opsize}, there exists a normal subgroup $O_p(G)$ of order $p^d \ge 2s+3$, where $p$ divides $s^2+1$.  We already know that $p$ is odd by Lemma \ref{lem:orderGreg}, and, since $p$ divides $s^2 + 1$, $p$ is coprime to $s$, which implies that $p^d$ divides either $s + \sqrt{2s} + 1$ or $s - \sqrt{2s} + 1$, but not both.  However, for any $s > 1$, $s - \sqrt{2s} + 1 < s + \sqrt{2s} + 1 < 2s+3 \le p^d$, a contradiction.  Therefore, $\mathcal{Q}$ does not admit a polarity.    
\end{proof}

Note that Theorem \ref{thm:polarity} implies that the regular action of $G$ on the set of points is necessarily different than the action of $G$ on the set of lines, in the sense that the set $\Delta$ defined above must be distinct from $\Delta_\mathcal{L}$, where for some fixed line $\ell$ incident with the fixed point $P$, 
\[\Delta_\mathcal{L} := \{g \in G: \ell^g \text{ is distinct from and concurrent with } \ell\} \cup \{1\}.\]

This begs the question of whether a group can possibly act regularly on both the point set or the line set of a thick generalized quadrangle.  Toward that end, we have the following proposition, which is the last piece before we can prove Theorem \ref{thm:regregorder}.

\begin{prop}
 \label{prop:regregorder}
 Let $G$ act regularly on both the point set and the line set of a generalized quadrangle of order $s$.  If $p$ is the unique odd prime dividing $s^2 + 1$ such that $F^*(G) = F(G) = O_p(G)$ and $p^n$ is the highest power of $p$ dividing $|G|$, then $|G|$ divides $p^n \cdot \prod_{k=0}^{n-1}(p^{n-k} - 1)$.
\end{prop}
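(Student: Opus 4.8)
The plan is to exploit the hypothesis $F^*(G) = F(G) = O_p(G)$ (which holds by Lemmas~\ref{lem:orderGreg} and~\ref{lem:FGregreg}) together with the standing fact that the generalized Fitting subgroup contains its own centralizer. Write $P := O_p(G)$. Then $C_G(P) \le F^*(G) = P$, and since $C_G(P) \cap P = Z(P)$ we conclude $C_G(P) = Z(P)$. Conjugation therefore induces a faithful action of $G/Z(P)$ on $P$, i.e.\ an embedding $G/Z(P) \hookrightarrow \Aut(P)$. Thus $|G|/|Z(P)|$ divides $|\Aut(P)|$, and since $Z(P)$ is a $p$-group, the largest divisor of $|G|$ coprime to $p$ already divides $|\Aut(P)|$.

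The second step is to control the part of $|\Aut(P)|$ coprime to $p$. Let $p^d = |P/\Phi(P)|$, so that $d$ is the minimal number of generators of $P$ and $d \le m$, where $|P| = p^m$. The action of $\Aut(P)$ on the Frattini quotient $P/\Phi(P)$ gives a homomorphism $\Aut(P) \to \GL(d,p)$ whose kernel is a $p$-group (a classical fact of Burnside). Hence $|\Aut(P)|$ is a power of $p$ times the order of a subgroup of $\GL(d,p)$, and since $|\GL(d,p)| = p^{d(d-1)/2}\prod_{j=1}^{d}(p^j-1)$, the largest divisor of $|\Aut(P)|$ coprime to $p$ divides $\prod_{j=1}^{d}(p^j-1)$. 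Because $P \le G$, Lagrange gives $m \le n$, so $d \le m \le n$ and therefore $\prod_{j=1}^{d}(p^j-1)$ divides $\prod_{j=1}^{n}(p^j-1) = \prod_{k=0}^{n-1}(p^{n-k}-1)$.

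Finally I would assemble these facts: the largest divisor of $|G|$ coprime to $p$ divides $|\Aut(P)|$, hence divides $\prod_{k=0}^{n-1}(p^{n-k}-1)$; multiplying by the largest power of $p$ dividing $|G|$, which is $p^n$ by the definition of $n$, yields that $|G|$ divides $p^n \prod_{k=0}^{n-1}(p^{n-k}-1)$, as required.

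I do not anticipate a genuine obstacle: once one invokes the self-centralizing property of $F^*(G)$ and the standard structure of the automorphism group of a $p$-group, the argument is a few lines. The only points needing a little care are that $O_p(G)$ need not be a Sylow $p$-subgroup of $G$ — so one must use $m \le n$ from Lagrange rather than an equality — and that one should pass to $p'$-parts throughout, since the $p$-part of $|G|$ is controlled trivially by the definition of $n$ and the bound $p^{d(d-1)/2}$ coming from $\GL(d,p)$ is, by contrast, far too weak to appear in the final estimate.
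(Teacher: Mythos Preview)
Your argument is correct and follows essentially the same route as the paper: use that $F^*(G)=F(G)=O_p(G)$ is self-centralizing, then show the $p'$-part of $G$ acts faithfully on the Frattini quotient $O_p(G)/\Phi(O_p(G))$ and hence embeds in $\GL(n,p)$. The only cosmetic difference is that the paper invokes solvability to extract a Hall $p'$-subgroup $H$ and then cites a coprime-action result (\cite[Theorem 5.3.5]{Gorenstein}) to get $H\hookrightarrow\GL(n,p)$ directly, whereas you route through $G/Z(P)\hookrightarrow\Aut(P)$ and Burnside's theorem that the kernel of $\Aut(P)\to\GL(d,p)$ is a $p$-group, then pass to $p'$-parts; your version therefore does not need the Hall subgroup, but both arrive at the same bound.
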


\begin{proof}
 We note that $G$ is solvable, and $F(G) = O_p(G)$ for some prime $p$ dividing $s^2 + 1$ by Lemma \ref{lem:FGregreg}.  Let $p^n$ be the order of a Sylow $p$-subgroup of $G$, and let $H$ be a Hall $p'$-subgroup of $G$, which exists by Hall's Theorem \cite[Theorem 6.4.1]{Gorenstein}.  Since $F(G) = O_p(G)$, $F(G)$ is self-centralizing, and, since $H$ is coprime to $p$, by \cite[Theorem 5.3.5]{Gorenstein} we have that $H$ acts faithfully on the Frattini quotient $F(G)/\Phi(F(G))$, which is elementary abelian of rank at most $n$.  Thus $H \liso \GL(n,p)$ with order coprime to $p$, and, since $|G| = p^n|H|$, the result follows.  
\end{proof}

\begin{proof}[Proof of Theorem \ref{thm:regregorder}.]
By Lemma \ref{lem:orderGreg}, $|G|$ is not divisible by $2$ or $3$, and by Lemma \ref{lem:yoshiara} (iii), this means $s+1$ is coprime to both $2$ and $3$.  Moreover, $s^2 + 1$ is not squarefree by Lemma \ref{lem:ghinelli} (iii).  Of the remaining possible values of $s$, there must be a prime $p$ dividing $s^2+1$ such that $(s^2 + 1)_p \ge 2s+3$ by Lemma \ref{lem:Opsize}, where $(s^2 + 1)_p$ denotes the highest power of $p$ dividing $s^2 + 1$.  Moreover, $(s^2 + 1)_p$ cannot be prime by Lemma \ref{lem:ghinelli} (ii), and $(s+1)(s^2 + 1)$ must divide $p^n \cdot \prod_{k=0}^{n-1}(p^{n-k} - 1)$ by Proposition \ref{prop:regregorder}.
\end{proof}

We next use Theorem \ref{thm:regregorder} to rule out many possible ``small'' groups.

\begin{prop}
 \label{prop:numerical}
 If $G$ is a group that acts regularly on both the point set and a line set of a thick generalized quadrangle of order $s$, then then $s > 10^8$ and $|G| > 10^{24} + 10^{16} + 10^8 + 1$.
\end{prop}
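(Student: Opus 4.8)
The plan is to combine the numerical constraints collected in Theorem \ref{thm:regregorder} and run a finite (computer-assisted) search over small candidate values of $s$. First, recall from Theorem \ref{thm:regregorder} that $s+1$ is coprime to $6$, that $s^2+1$ is not squarefree, and that there is a prime $p$ with $(s^2+1)_p \ge 2s+3$ and $(s^2+1)_p$ not prime; moreover $(s+1)(s^2+1) \mid p^n \cdot \prod_{k=0}^{n-1}(p^{n-k}-1)$, where $p^n = (s^2+1)_p$ since $F(G)=O_p(G)$ and $p^n$ is the full $p$-part of $|G|=(s+1)(s^2+1)$. The condition $(s^2+1)_p \ge 2s+3$ is already very restrictive: writing $s^2+1 = p^n m$ with $\gcd(m,p)=1$, we need $p^n \ge 2s+3 > \sqrt{s^2+1} \cdot \sqrt{2}$ roughly, so $m < \tfrac12\sqrt{s^2+1}$ for $s$ large; combined with $n \ge 2$ (non-squarefree) and $p^n$ not prime (so in fact $n\ge 2$), this forces $s^2+1$ to be "close to" a prime power.

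The key computational step is: for each $s \le 10^8$ with $s+1$ coprime to $6$, factor $s^2+1$, identify whether any prime power $p^n \| s^2+1$ satisfies $p^n \ge 2s+3$ with $n\ge 2$, and if so check the divisibility condition $(s+1)(s^2+1) \mid p^n\prod_{k=0}^{n-1}(p^{n-k}-1)$. One should verify that no $s$ in this range survives all the tests, whence $s > 10^8$; the bound on $|G|$ then follows since $|G|=(s+1)(s^2+1) = s^3 + s^2 + s + 1 > 10^{24} + 10^{16} + 10^8 + 1$. I would present this as: "A direct computation (for instance in \GAP) verifies that no $s$ with $1 < s \le 10^8$ satisfies all of the conditions of Theorem \ref{thm:regregorder}; hence $s > 10^8$ and $|G| = (s+1)(s^2+1) > 10^{24} + 10^{16} + 10^8 + 1$."

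The main obstacle is making the search genuinely exhaustive and efficient: naively factoring $s^2+1$ for all $s$ up to $10^8$ is on the edge of feasibility, so one wants to exploit the structure first. A good reduction is to note that $p^n \ge 2s+3$ together with $p^n \mid s^2+1$ forces, for each fixed prime power $P=p^n$, that $s$ lies in a short interval $[ \tfrac{P-3}{2},\ \sqrt{P-1}\,]$ \emph{and} satisfies $s^2 \equiv -1 \pmod{P}$; for such $P$ there are at most two residues $s \bmod P$, so the candidate $s$ are severely limited and one can enumerate prime powers $P$ up to about $2\cdot10^{16}$ rather than looping over all $s$. After that, the remaining divisibility and coprimality conditions eliminate the finitely many survivors. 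I would also double-check the edge cases (small $n$, $p=5$, $p=13$, etc., where $s^2+1$ has a small prime power factor) by hand or by a short direct loop, since those are exactly the cases most likely to slip through an optimized sieve.
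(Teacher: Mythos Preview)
Your approach is essentially the paper's: the published proof simply records that a \GAP\ computation shows no $s\le 10^8$ satisfies the constraints of Theorem~\ref{thm:regregorder}, and then $|G|=(s+1)(s^2+1)>10^{24}+10^{16}+10^8+1$ follows. One small slip in your efficiency remarks: for a fixed prime power $P=p^n$, the condition $P\mid s^2+1$ gives $s\ge\sqrt{P-1}$ while $P\ge 2s+3$ gives $s\le (P-3)/2$, so the admissible interval is $[\sqrt{P-1},\,(P-3)/2]$, not the reverse; this does not affect the main argument, only your proposed alternative sieve.
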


\begin{proof}
 The following computation was completed in $\GAP$ \cite{GAP}.  Such a generalized quadrangle of order $s$  must satisfy all the conditions imposed on $s$ listed in Theorem \ref{thm:regregorder}: namely, $s+1$ is coprime to both $2$ and $3$; $s^2 + 1$ is not squarefree; there is a prime $p$ dividing $s^2 + 1$ such that, if $p^n$ is the largest power of $p$ dividing $s^2+1$, then $n > 1$ and $p^n \ge 2s + 3$; and $(s+1)(s^2+1)$ divides $p^n \cdot \prod_{k=0}^{n-1}(p^{n-k} - 1)$.  Since no value of $s$ less than or equal to $10^8$ satisfies all of these conditions, the result follows.
\end{proof}

It would be interesting to know whether there exist any even values of $s$ that satisfy all of the restrictions mentioned in the statement of Theorem \ref{thm:regregorder}.

\section{Generalized quadrangles of order $s$, $s$ odd and $s+1$ coprime to $3$, with a group of automorphisms acting regularly on points}
\label{secss}

Let $\mathcal{Q}$ be a finite generalized quadrangle of order $(s,s)$, where $s>1$ is odd and $s+1$ is coprime to $3$, with point set $\mathcal{P}$ and line set $\mathcal{L}.$  Let $G$ be a group of automorphisms of $\mathcal{Q}$ that acts regularly on $\mathcal{P}$, and suppose that $N$ is a nonabelian minimal normal subgroup of $G$.  For a distinguished point of $P$ of $\mathcal{Q}$, we define $\Delta:= \{g \in G: P^g \sim P \} \cup \{1\}.$

First, by Lemma \ref{lem:yoshiara} (iv), $N \cong \Sz(q)^m$, where $m \in \mathbb{N}$ and $q = 2^{2e+1}$ for some $e \in \mathbb{N}.$  Let $N = S_1S_2...S_m$, where each $S_i \cong \Sz(q)$, each $S_i$ is normal in $N$, all $S_i$ and $S_j$ are conjugate in $G$, and $S_i \cap S_j = \{1\}$ for all $i\neq j$.

Since $|G| = |\mathcal{P}| = (s+1)(s^2+1)$ and $s^2 + 1 \equiv 2 \pmod 4,$ we have that the highest power of $2$ dividing $|G|$, which we will denote by $|G|_2$, is $2(s+1)_2$, i.e., two times the highest power of $2$ dividing $s+1$.  Since $|\Sz(q)|_2 = q^2$ by Lemma \ref{lem:suz} (i), $|N|_2 = q^{2m}$, and we immediately obtain the following inequality:
\begin{equation}
\label{eq:q2m}
 q^{2m} \le 2(s+1).
\end{equation}
Furthermore, by Lemma \ref{lem:yoshiara} (ii), since $\gcd(s,s) = s > 1,$ for all $a \in G$ we have $a^G \cap \Delta \neq \varnothing$.

\begin{lem}
\label{lem:directproduct}
Let $G$ be a group acting regularly on the set of points of a generalized quadrangle of order $(s,t)$, where $\gcd(s,t) > 1.$  Let $N = T_1T_2...T_k$ be a nonabelian minimal normal subgroup of $G$, where each $T_i$ is normal in $N$, has trivial intersection with the other $T_j$, and is isomorphic to the same nonabelian simple group $T$.  If each $T_i \subseteq \Delta$, then there exists a line $\ell$ such that $N \le G_{\ell}.$
\end{lem}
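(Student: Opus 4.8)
The plan is to attach to each direct factor $T_i$ a \emph{spread} of $\mathcal{Q}$, namely the set $F_i$ of all lines fixed (setwise) by $T_i$, and then to show that $F_1,\dots,F_k$ possess a common line, which is then automatically fixed by $N$. First I would observe that the hypothesis is independent of the choice of distinguished point: if $Y=P^h$, then $\Delta_Y:=\{g:Y^g\sim Y\}\cup\{1\}=h^{-1}\Delta_P h$, and since $G$ permutes $\{T_1,\dots,T_k\}$, every $hT_ih^{-1}$ is some $T_j\subseteq\Delta_P$, so $T_i\subseteq\Delta_Y$ for all $Y$. Therefore Lemma \ref{lem:yoshiara}(i), applied at each point $Y$ in turn, yields a unique line of $F_i$ through $Y$; and two lines of $F_i$ cannot meet, since a common point would be fixed by $T_i$, contradicting the regularity of $G$. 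Thus $F_i$ is a spread; it has $st+1$ lines, and since $T_i$ acts on the $s+1$ points of any line of $F_i$ with trivial stabilizers, $|T|$ divides $s+1$. Because any line in $F_1\cap\cdots\cap F_k$ is fixed by every $T_i$ and hence by $N=\langle T_1,\dots,T_k\rangle$, it suffices to prove $F_1\cap\cdots\cap F_k\neq\varnothing$.

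For this I would use that $T_i$ and $T_j$ commute for $i\neq j$ (they are normal in $N$ with trivial intersection), so $T_j$ permutes $F_i$, and more generally $T_{j+1}$ permutes $\bigcap_{i\le j}F_i$ — the set of lines fixed by $T_1\cdots T_j$, a subgroup that $T_{j+1}$ centralizes. The crucial claim is that a line $\ell$ lying in such an intersection but not in $F_{j+1}$ has trivial stabilizer in $T_{j+1}$: if $1\neq u\in T_{j+1}$ fixed $\ell$, choose $X\in\ell$ and let $m$ be the line of $F_{j+1}$ through $X$; then $X^u$ lies on $\ell^u=\ell$ and on $m^u=m$ while $X^u\neq X$, so $\ell$ and $m$ share two points and $\ell=m\in F_{j+1}$, a contradiction. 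Hence every $T_{j+1}$-orbit on $\bigcap_{i\le j}F_i$ has size $1$ or $|T|$, giving $\big|\bigcap_{i\le j+1}F_i\big|\equiv\big|\bigcap_{i\le j}F_i\big|\pmod{|T|}$; by induction $\big|\bigcap_{i=1}^k F_i\big|\equiv|F_1|=st+1\pmod{|T|}$. Because $|T|$ divides $s+1$, we get $st+1\equiv 1-t\pmod{|T|}$, and this is nonzero (for instance when $s=t$, since otherwise $|T|$ would divide $\gcd(s+1,s-1)\le 2$, impossible as $|T|\ge 60$); therefore $F_1\cap\cdots\cap F_k\neq\varnothing$, and any line $\ell$ in it satisfies $N\le G_\ell$.

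I expect the main obstacle to be the triviality of the line stabilizers in the second paragraph: it is precisely this that forces the orbit lengths to be divisible by $|T|$ and makes the modular count go through, and it depends on combining the commuting relations among the $T_i$ with the quadrangle axiom that two distinct lines meet in at most one point. A secondary point needing attention is the final non-vanishing of $st+1\bmod|T|$, which requires $|T|\nmid t-1$; this holds in the situations where the lemma is applied (order $s$, where it is automatic, and order $(u^2,u^3)$, since $\gcd(u^2+1,u^3-1)\le 2$), but it is the one place the argument uses slightly more than $\gcd(s,t)>1$.
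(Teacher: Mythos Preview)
Your spread-counting argument is a genuinely different route from the paper's, and the core of it is sound: the observation that each $T_i$ fixes a spread $F_i$, the trivial-stabilizer claim for $T_{j+1}$ acting on $\bigcap_{i\le j}F_i\setminus F_{j+1}$, and the resulting congruence $\bigl|\bigcap_i F_i\bigr|\equiv st+1\pmod{|T|}$ are all correct. However, as you yourself flag, the final non-vanishing step needs $|T|\nmid t-1$, which is not a consequence of the stated hypothesis $\gcd(s,t)>1$. So what you have written proves the lemma only under that extra arithmetic assumption; it happens to hold in both applications in the paper, but it does not establish the lemma as stated.

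The paper's proof avoids this issue by not counting at all. Its engine is Lemma~\ref{lem:yoshiara}(ii): every $G$-conjugacy class meets $\Delta$. One first checks, using the GQ axiom, that if the lines $\ell_1,\ell_2$ through $P$ fixed by $T_1,T_2$ are distinct, then no product $x_1x_2$ with $1\neq x_i\in T_i$ can lie in $\Delta$ (since $P^{x_1x_2}$ is collinear with two distinct points on each of $\ell_1,\ell_2$, forcing $P^{x_1x_2}=P$). But the $G$-conjugacy class of such an element consists entirely of products of the same shape in some $T_iT_j$, and it must meet $\Delta$; hence two of the $\ell_i$ coincide. An induction on the number of factors, repeating this conjugacy-class trick, gives $N\le G_\ell$. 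This argument uses only $\gcd(s,t)>1$ (to invoke Yoshiara's lemma) and therefore proves the lemma in full generality. Your approach buys a pleasant structural picture (the spreads $F_i$) at the price of an arithmetic side condition; the paper's approach buys full generality via a softer, non-quantitative argument.
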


\begin{proof}
Assume that $T_i \subseteq \Delta$ for each $1 \le i \le k.$  By Lemma \ref{lem:yoshiara} (i) for each $i$ there is a (necessarily unique) line $\ell_i$ through the distinguished point $P$ such that $T_i \le G_{\ell_i}$.  Let $x_1 \in T_1$ and $x_2 \in T_2$.  Since $x_1$ and $x_2$ commute, both $P^{x_1}$ and $P^{x_2}$ are collinear with $P^{x_1x_2}$.  This implies that either $\ell_1 = \ell_2$ and $T_1T_2 \le G_{\ell_1}$ or that no element $x_1x_2,$ where neither $x_1$ nor $x_2$ is trivial, is in $\Delta.$  Let $x = x_1x_2 \in T_1T_2$ for some $x_1,x_2$ such that neither $x_1$ nor $x_2$ is trivial.  This means that $x^G$ consists of elements of the form $x_ix_j \in T_iT_j$, $1 \le i,j \le k.$  However, by Lemma \ref{lem:yoshiara} (ii), $x^G \cap \Delta \neq \varnothing$, so there exist $1 \le i_i, i_2 \le k$ such that $T_{i_1}T_{i_2} \le G_{\ell_{i_1}}$ for some line $\ell_{i_1}$ through $P.$  We now proceed by induction, and assume that $T_{j_1}T_{j_2}...T_{j_n} \le G_{\ell_{j_1}}$ for distinct indices $j_1,..., j_n.$ Arguing as above, since $\Delta$ meets every conjugacy class, there must be a set of indices $l_1,...,l_{n+1}$ such that $T_{l_1}T_{l_2}...T_{l_{n+1}} \le G_{l_1}$.  Hence $N = T_1...T_k \le G_{\ell}$ for some line $\ell$ through the point $P$, as desired. 
\end{proof}

\begin{lem}
\label{lem:m=1}
If $G$ and $N = S_1...S_m$ are as above, then $m=1$ and $N \cong \Sz(q)$ for some $q.$ 
\end{lem}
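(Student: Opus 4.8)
The plan is to show that if $m \ge 2$ then $N = S_1 \cdots S_m$ must be contained entirely in $\Delta$, which by Lemma \ref{lem:directproduct} would force $N \le G_\ell$ for some line $\ell$ through $P$; but then $|N| = |\Sz(q)|^m$ divides $|G_\ell|$, and since $G$ acts regularly on points, $|G_\ell| = s+1$ (the stabilizer of a line acts semiregularly on the $s+1$ points off a fixed point of $\ell$, or more directly $|G_\ell|$ divides $|\mathcal P|/|\mathcal L| \cdot (\text{something})$ — the cleanest bookkeeping is $|G_\ell| = |\mathcal{P}|/|\ell^G|$ and $|\ell^G| \le |\mathcal L| = (s+1)(s^2+1)$, giving $|G_\ell| \ge 1$, so one instead uses that $G_\ell$ fixes $\ell$ and hence permutes its $s+1$ points, and since a point stabilizer is trivial, $|G_\ell|$ divides $s+1$). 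So the contradiction I want to reach is $|\Sz(q)|^m \mid s+1$ clashing with the inequality \eqref{eq:q2m}, $q^{2m} \le 2(s+1)$: since $|\Sz(q)| = q^2(q-1)(q^2+1)$, having $|\Sz(q)|^m$ divide $s+1$ gives $s+1 \ge q^{2m}(q-1)^m(q^2+1)^m$, which is vastly larger than $2(s+1)$ for $q \ge 8$ — a contradiction. Thus $N$ cannot be contained in $\Delta$, and I must rule out that possibility another way when $m \ge 2$.

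The heart of the argument is therefore: \emph{when $m \ge 2$, every $S_i$ is contained in $\Delta$}. Fix $i$; I want to show every nontrivial $x \in S_i$ lies in $\Delta$, equivalently $x^G \cap \Delta^c$ contributes nothing forced. By Lemma \ref{lem:yoshiara} (ii), $x^G \cap \Delta \ne \varnothing$ and $|x^G \cap \Delta^c|$ is a multiple of $\gcd(s,s) = s$. Since all the $S_j$ are conjugate in $G$ and $N$ is a minimal normal subgroup, the conjugacy class $x^G$ is spread across the factors; in particular $x^G$ meets each $S_j$, and by a counting/symmetry argument the number of elements of $x^G$ inside a single factor $S_j$ equals $|x^G|/m'$ for the appropriate $m'$ dividing $m$. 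The key leverage is the \emph{smallness} of conjugacy classes forced by $s$ being relatively small: from \eqref{eq:q2m} we have $s+1 \ge q^{2m}/2$, and comparing with $|N| \le |G| = (s+1)(s^2+1)$ we get $|\Sz(q)|^m \le (s+1)(s^2+1)$, i.e. roughly $q^{3m+\epsilon} \le (s+1)(s^2+1) \approx (s+1)^3$, so $s+1$ is of the same order of magnitude as $q^{m+\epsilon}$. Meanwhile the smallest nontrivial conjugacy class of $\Sz(q)$ has size $q^2(q-1)(q-\sqrt{2q}+1)$ roughly $q^4$ (Lemma \ref{lem:suz} (v)), and a conjugacy class in $N$ of an element supported on one factor has size at least that. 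If such a class $x^G$ has $|x^G \cap \Delta^c| > 0$, then by Lemma \ref{lem:xGc} $|x^G| \ge s+1$; but I want instead to derive that $|x^G| < s+1$ is impossible to reconcile, or rather that $x^G \subseteq \Delta$ is forced. The plan is: show directly that there are not enough elements in $\Delta^c \cap N$ (whose size is at most $|N| - |N \cap \Delta|$) to absorb a full $s$-multiple's worth from each such class, given how small $s$ is forced to be relative to $|N|$. Concretely, $|\Delta| \le$ the number of pairs (point collinear with $P$) is $s(s+1)+1$, so $|N \cap \Delta| \le s(s+1)+1$, and I compare $|N \cap \Delta^c| \ge |N| - s(s+1) - 1$ against the total $\sum s\cdot(\text{stuff})$ — but the real dichotomy I should exploit is the one already used in Lemma \ref{lem:directproduct}: for $x = x_1 x_2$ with $x_1 \in S_1$, $x_2 \in S_2$ both nontrivial, \emph{either} $\ell_1 = \ell_2$ \emph{or} no such product lies in $\Delta$, and the latter is incompatible with $x^G \cap \Delta \ne \varnothing$ since $x^G$ consists of such products. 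This is exactly how Lemma \ref{lem:directproduct} already handles the "mixed" elements; so the only thing left to establish is that each $S_i$ itself lands in $\Delta$.

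For that last point — that each $S_i \subseteq \Delta$ — I would argue: the conjugacy class in $G$ of a nontrivial $x \in S_i$ either lies entirely in $\Delta$ (done), or by Lemma \ref{lem:yoshiara} (vi) its order... no — rather, if $x^G \not\subseteq \Delta$ then $x^G \cap \Delta^c \ne \varnothing$ and $|x^G| \ge s+1$ by Lemma \ref{lem:xGc}. But $x \in S_i$ and $S_i \cong \Sz(q)$ with $S_i \norml N \norml G$, so $C_G(x) \supseteq C_{S_i}(x)$, whence $|x^G| = |G|/|C_G(x)| \le |G|/|C_{S_i}(x)| = (s+1)(s^2+1)/|C_{S_i}(x)|$. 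Picking $x$ of order $4$ in $S_i$, Lemma \ref{lem:suz} (vi) gives $|C_{S_i}(x)| = 2q$, so this bound is weak; picking $x$ with the largest centralizer in $\Sz(q)$ gives a better bound. The decisive comparison should instead come from summing over a Sylow $2$-subgroup or from the $|N|_2 = q^{2m}$ constraint \eqref{eq:q2m} combined with $|x^G| \ge s+1$ forcing $|C_G(x)| \le (s^2+1)$, and then noting that for $x$ of order $4$, $x \in \bigcap_j$ nothing — rather, the centralizer $C_G(x)$ contains $C_N(x) \supseteq \prod_{j\ne i} S_j \times C_{S_i}(x)$, which has order $|\Sz(q)|^{m-1} \cdot 2q$; if $m \ge 2$ this is $\ge |\Sz(q)| \cdot 2q > s^2+1$ (since $|\Sz(q)| > s$ would need checking, but $q^{2m} \le 2(s+1)$ and $|\Sz(q)|^m \le |G| = (s+1)(s^2+1)$ pin down the sizes so that $|\Sz(q)|^{m-1}\cdot 2q$ exceeds $s^2+1$ whenever $m \ge 2$), contradicting $|C_G(x)| \le s^2+1$. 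Hence $x^G \subseteq \Delta$ for such $x$; combined with the fact that $\Delta \cup \{1\}$-membership is preserved under the relevant structure, one upgrades this to $S_i \subseteq \Delta$, and then Lemma \ref{lem:directproduct} finishes as above.

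\textbf{The main obstacle} I anticipate is the last upgrade — passing from "the order-$4$ elements of $S_i$ lie in $\Delta$" to "all of $S_i$ lies in $\Delta$" — since $\Delta$ is not a subgroup and Lemma \ref{lem:yoshiara} (i) requires the \emph{entire} subgroup $S_i$ inside $\Delta$ before concluding $S_i \le G_\ell$. The resolution should be that $\Sz(q)$ is generated by its elements of order $4$ (indeed by involutions together with order-$4$ elements, or one can use that the Sylow $2$-subgroup is generated by elements of order $4$ and $N_G(Q)$ controls enough), but getting a clean "$S_i \subseteq \Delta$" rather than just "$S_i \le \langle \Delta \cap S_i\rangle \le$ something" will require care; the cleanest route is probably to show $C_G(x) \le s^2+1$ fails not just for order-$4$ elements but for \emph{every} nontrivial $x \in S_i$ when $m\ge2$ — which follows from the same centralizer bound $|C_G(x)| \ge |\Sz(q)|^{m-1} \cdot |C_{S_i}(x)| \ge |\Sz(q)|^{m-1} \cdot (q - \sqrt{2q} + 1)$ by Lemma \ref{lem:suz} (v) — and then $x^G \cap \Delta^c = \varnothing$ for every $x \in S_i$, i.e. $S_i \subseteq \Delta$ outright, after which Lemma \ref{lem:directproduct} and the divisibility contradiction $|\Sz(q)|^m \mid s+1$ versus \eqref{eq:q2m} complete the proof.
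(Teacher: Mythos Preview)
Your approach has a genuine gap. You want to show, for $m \ge 2$, that every nontrivial $x \in S_i$ has $|C_G(x)| > s^2+1$ (so that $|x^G| < s+1$, whence $x^G \subseteq \Delta$ and ultimately $S_i \subseteq \Delta$). For this you invoke $|C_G(x)| \ge |\Sz(q)|^{m-1}\,(q-\sqrt{2q}+1)$. The problem is that there is \emph{no upper bound on $s$ in terms of $q$ and $m$} at this stage: the constraints $q^{2m} \le 2(s+1)$ and $|\Sz(q)|^m \le |G| = (s+1)(s^2+1)$ both bound $s$ from \emph{below}, not above. Concretely, take $m=2$, $q=8$: then $s+1 \ge q^4/2 = 2048$, so $s^2+1 > 4 \cdot 10^6$, while $|\Sz(8)|\,(q-\sqrt{2q}+1) = 29120 \cdot 5 = 145600$. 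Your claimed inequality $|\Sz(q)|^{m-1}\cdot(q-\sqrt{2q}+1) > s^2+1$ fails outright, and it fails for every $q$ when $m=2$ (the left side grows like $q^6$, the right at least like $q^8/4$). So you cannot force $S_i \subseteq \Delta$ this way.

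The paper runs the logic in the opposite direction, and this reversal is exactly what produces the missing upper bound on $s$. From Lemma~\ref{lem:directproduct} together with Lemma~\ref{lem:yoshiara}\,(v) one concludes that \emph{not} every $S_i$ lies in $\Delta$; so some $a \in S_1 \cap \Delta^c$ exists. Lemma~\ref{lem:xGc} then gives $s+1 \le |a^G|$, and since $a^G \subseteq \bigcup_j S_j$ one bounds $|a^G|$ above by $m$ times the largest conjugacy class in $\Sz(q)$, namely $m\,q^2(q-1)(q+\sqrt{2q}+1) < 2mq^5$. Now the two inequalities $q^{2m} \le 2(s+1)$ and $s+1 < 2mq^5$ combine to give $q^{2m-5} < 2m$, forcing $m \le 2$; a further divisibility argument (using that $q-1$ must divide $s^2+1$ once $s+1$ is pinned down as a power of $2$) eliminates $m=2$. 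Your centralizer lower bound for elements of $S_1$ is correct and is essentially the dual of the paper's class-size upper bound, but it only becomes useful \emph{after} one has the inequality $s+1 \le |a^G|$ pointing the right way.
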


\begin{proof}
Applying Lemma \ref{lem:directproduct} to our situation, if each $S_i \subseteq \Delta$, then $N \le G_{\ell}$ for some line $\ell.$  This implies that $N \subseteq \Delta$, and by Lemma \ref{lem:yoshiara} (v) implies that $|N|$ divides $\gcd(s+1, s^2 - s) = 2$, a contradiction.  Hence we may assume without a loss of generality that $S_1 \cap \Delta^c \neq \varnothing.$  Let $a \in S_1 \cap \Delta^c.$  Clearly $a^G \cap \Delta^c \neq \varnothing,$ so \begin{equation}
\label{eq:s+1}
s+1 \le |a^G| 
\end{equation} by Lemma \ref{lem:xGc}. On the other hand, by Lemma \ref{lem:suz} (v) we have:
\begin{equation}
\label{eq:aG}
 |a^G| \le mq^2(q-1)(q + \sqrt{2q} + 1).
\end{equation}
Combining \eqref{eq:s+1} with \eqref{eq:q2m} and \eqref{eq:aG}, we find that $q^{2m} \le 2mq^2(q-1)(q + \sqrt{2q} + 1) < 2mq^5,$ and so $q^{2m-5} < 2m.$  Hence,
$$ 2m > q^{2m-5} \ge (1+7)^{2m-5} \ge 1 + 7(2m-5),$$
and, simplifying, we see that $m < \frac{34}{12} < 3,$ and so $m = 1$ or $2$.

Suppose that $m=2$, so $N \cong \Sz(q) \times \Sz(q)$.  The group $G$ must act transitively on the simple direct factors of $N$, so $2q^4$ divides $|G|_2$ and hence $q^4$ divides $(s+1)_2$.  This improves \eqref{eq:q2m} to $q^4 \le s+1.$  Going back to \eqref{eq:aG} and \eqref{eq:s+1}, we find now that: 
\begin{equation}
\label{eq:betters+1}
q^4 \le s+1 \le 2q^2(q-1)(q + \sqrt{2q} + 1) < 3q^4.
\end{equation} 
Let $p$ be any odd prime dividing $s+1$.  Then $pq^4$ divides $s+1$, and, combined with \eqref{eq:betters+1}, this yields $5q^4 \le pq^4 \le s+1 < 3q^4$, a contradiction.  

Hence $s+1$ must be a power of $2$, and $s+1 = |G|_2/2$.  Moreover, since $q^4 \le s+1 < 3q^4$ by \eqref{eq:q2m}, $s+1$ must be either $q^4$ or $2q^4$.  Since $|\Sz(q)|$ divides $|G|$, $q-1$ divides $|G|$ by Lemma \ref{lem:suz} (i).  Since $s+1$ is a power of $2$, this means that $q-1$ must divide $s^2+1$.  However, if $s+1 = q^4$, then \[s^2 + 1 = (q^4 - 1)^2 + 1 = (q-1)\frac{(q^4-1)^2}{q-1} + 1,\] and, if $s+1 = 2q^4$, then \[s^2 + 1 = (2q^4 - 1)^2 + 1 = (q-1)\frac{4q^4(q^4-1)}{q-1} + 2,\] and so $q-1$ does not divide $s^2+1$ in either case, a contradiction.  Therefore, $m=1$, and $N \cong \Sz(q)$ for some $q$.

\end{proof}

\begin{lem}
\label{lem:unique}
If $G$ is as above, then it has a unique nonabelian minimal normal subgroup $N$. 
\end{lem}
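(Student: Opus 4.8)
The plan is to argue by contradiction. Suppose $G$ has two distinct nonabelian minimal normal subgroups $N_1 \neq N_2$. Two distinct minimal normal subgroups of a group intersect trivially and centralise each other, so $M := N_1 N_2$ is the internal direct product $N_1 \times N_2$ and $M \trianglelefteq G$. By Lemma \ref{lem:m=1}, applied in turn to $N_1$ and to $N_2$, we have $N_i \cong \Sz(q_i)$ with $q_i = 2^{2e_i+1} \ge 8$ for $i = 1,2$, and I would repeat the numerical bookkeeping from the proof of Lemma \ref{lem:m=1}, now keeping $N_1$ and $N_2$ as separate normal factors instead of $G$-conjugate ones.

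Concretely, I would record: (a) since $s^2+1 \equiv 2 \pmod 4$ we have $|G|_2 = 2(s+1)_2$, and as $M \le G$ this forces $q_1^2 q_2^2 = |M|_2$ to divide $2(s+1)_2$, whence $s+1 \ge \tfrac12 q_1^2 q_2^2$, the analogue of \eqref{eq:q2m}; (b) by Lemma \ref{lem:yoshiara}(v) neither $N_i$ can lie in $\Delta$, since otherwise $|N_i|$ would divide $\gcd(s+1, s^2-s) = 2$, so I may pick $a_i \in N_i \cap \Delta^c$, and since $a_i^G \subseteq N_i$ meets $\Delta^c$, Lemma \ref{lem:xGc} gives $s+1 \le |a_i^G| < |\Sz(q_i)|$, while $N_{3-i}$ centralises $a_i$ and hence $|\Sz(q_{3-i})| \le |C_G(a_i)| = |G|/|a_i^G| \le s^2+1$; (c) $|M| = |\Sz(q_1)|\,|\Sz(q_2)|$ divides $|G| = (s+1)(s^2+1)$. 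Combining (a) with the upper bound in (b) (taking $q_1 \le q_2$ without loss) gives $q_2^2 < 2(q_1-1)(q_1^2+1)$, which together with $|\Sz(q_2)| \le s^2+1$ and $s+1 < |\Sz(q_1)|$ confines $q_2$, and hence $s$, to a narrow range relative to $q_1$.

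The final step, which I expect to be the main obstacle, is a divisibility/parity argument in the spirit of the endgame of Lemma \ref{lem:m=1}: from (c) and $\gcd(s+1, s^2+1) = 2$, every odd prime-power factor of $|\Sz(q_1)|\,|\Sz(q_2)|$ must land entirely in $s+1$ or entirely in $s^2+1$; feeding in the sharp inequalities of (a)–(b)—and, where needed, the torus orders $q_i-1$ and $q_i \pm \sqrt{2q_i}+1$ from Lemma \ref{lem:suz}(iv),(v) together with Lemma \ref{lem:yoshiara}(vi)—one is forced to conclude that a factor such as $q_i - 1$ divides $s^2+1$, while the size constraints show this is impossible, exactly as in the computations closing the proof of Lemma \ref{lem:m=1}. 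This contradiction establishes uniqueness.

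I expect this last step to be delicate because, unlike the $m=2$ case of Lemma \ref{lem:m=1}, the group $G$ does not permute $N_1$ and $N_2$ (each is normal), so we do not automatically gain the extra factor of $2$ in the $2$-part that made the bound $q^4 \le s+1 < 3q^4$ tight enough there to rule out odd prime divisors of $s+1$ at once. Here the admissible range for $s+1$ relative to $(s+1)_2$ is wider, so the argument must instead exploit the full near-coprimality of $s+1$ and $s^2+1$ together with the centraliser-order bounds on both Suzuki factors simultaneously.
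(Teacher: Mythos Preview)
Your setup (a)--(c) matches the paper's approach exactly, and the strategy is correct.  The gap is that you stop short of the endgame and, in doing so, overestimate its difficulty.  The key missed ingredient is the \emph{sharp} conjugacy-class bound from Lemma~\ref{lem:suz}(v): with $q := q_1 \le q_2$ and $a \in N_1 \cap \Delta^c$, one has
\[
  s+1 \;\le\; |a^G| \;\le\; q^2(q-1)(q+\sqrt{2q}+1) \;<\; 2q^4,
\]
rather than the crude $|a^G| < |\Sz(q)|$ you recorded in~(b).  Combined with (a), which gives $(s+1)_2 \ge q_1^2 q_2^2/2 \ge q^4/2$, this is already enough to force $s+1$ to be a power of~$2$: since $3 \nmid (s+1)$, any odd prime divisor $p$ of $s+1$ satisfies $p \ge 5$, whence $s+1 \ge 5(s+1)_2 \ge 5q^4/2 > 2q^4$, a contradiction.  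Thus $s+1 = (s+1)_2 \in \{q^4/2,\, q^4\}$, and now $q-1$ (which divides $|N_1|$ and hence $|G|$, and is coprime to the $2$-power $s+1$) must divide $s^2+1$; a one-line polynomial division eliminates both values.

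So the ``missing factor of $2$'' from the lack of transitivity on $\{N_1,N_2\}$ that worried you is exactly offset by the tighter bound $2q^4$ in place of the $3q^4$ used in the $m=2$ case of Lemma~\ref{lem:m=1}, and the proof closes without any of the delicate divisibility bookkeeping you anticipated.
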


\begin{proof}
Suppose that $G$ has two minimal normal subgroups, $M$ and $N$.  By Lemma \ref{lem:m=1}, $N \cong \Sz(q)$ and $M \cong \Sz(q')$, where without a loss of generality $q \le q'.$  Distinct minimal normal subgroups must commute, and so we may proceed as in the proof of Lemma \ref{lem:m=1}: $q^2(q')^2/2$ divides $s+1$, and so $q^4/2 \le q^2(q')^2/2 \le s+1.$  Also, $N \cap \Delta^c \neq \varnothing$ by Lemma \ref{lem:yoshiara} (v), so for some $x \in N$, $x^G \cap \Delta^c \neq\varnothing$.  Thus, by Lemma \ref{lem:xGc} and Lemma \ref{lem:suz} (v),
\[s+1 \le |x^G| \le q^2(q-1)(q + \sqrt{2q} + 1) < 2q^4.\]   
If an odd prime $p$ divides $s+1$, then $p \ge 5$, and
\[ s+1 \ge p(s+1)_2 \ge \frac{5q^4}{2} > 2q^4,\] a contradiction, and so no odd prime can divide $s+1$.  However, if $s+1$ is a power of $2$, then $s+1 = |G|_2/2$, and, since $q^4$ divides $|G|$, $q^4/2$ divides $s+1$.  We know from above that $s+1 < 2q^4$,  so either $s+1 = q^4/2$ or $s+1 = q^4$, since $s+1$ is a power of $2$.  Moreover, if $s+1$ is a power of $2$, then, since $q-1$ divides $|N|$, $q-1$ must divide $|G|$ and hence $s^2 + 1$.  If $s+1 = q^4$, then
\[s^2 + 1 = (q^4 - 1)^2 + 1 = (q-1)\frac{(q^4-1)^2}{q-1} + 1.\] If $s+1 = q^4/2$, then $q-1$ divides $4(s^2 + 1)$, and so
\[4(s^2 + 1) = q^8 -4q^4 + 8 = (q-1)(q^7 + q^6 +q^5 +q^4 - 3q^3 - 3q^2 - 3q - 3) + 5,\]
and hence $q-1$ cannot divide $s^2 + 1$ in either case, a contradiction.  Therefore, if $G$ has a nonabelian minimal normal subgroup, it must be unique.
\end{proof}

\begin{lem}
\label{lem:2}
If $G$ is as above, then $s+1$ cannot be a power of $2$. 
\end{lem}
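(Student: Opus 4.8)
The plan is to reach a contradiction by a very short number‑theoretic argument that uses only the fact that the simple group $\Sz(q)$ is a subgroup of $G$. Suppose, for a contradiction, that $s+1 = 2^a$ for some positive integer $a$. Since $N \cong \Sz(q)$ with $q = 2^{2e+1}$ and $\Sz(q)$ is nonabelian simple, we have $q \ge 8$, so in particular $4$ divides $q$. By Lagrange's Theorem together with Lemma \ref{lem:suz} (i), the number $q-1$ divides $|N|$, which divides $|G| = (s+1)(s^2+1) = 2^a(s^2+1)$.

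Next I would isolate the factor $s^2+1$: since $q$ is even, $q-1$ is odd, so $\gcd(q-1, s+1) = \gcd(q-1, 2^a) = 1$, and therefore $q-1$ divides $s^2+1$; equivalently $s^2 \equiv -1 \pmod{q-1}$. Now comes the key observation. Because $4 \mid q$, we have $q - 1 \equiv 3 \pmod 4$. A positive integer congruent to $3$ modulo $4$ is odd and cannot be a product of primes all congruent to $1$ modulo $4$ (such a product is itself $\equiv 1 \pmod 4$), so $q-1$ has a prime divisor $r$ with $r \equiv 3 \pmod 4$. Since $r \mid q-1 \mid s^2+1$, we obtain $s^2 \equiv -1 \pmod r$, so $-1$ is a quadratic residue modulo $r$; this contradicts the fact (Euler's criterion) that $-1$ is a nonresidue modulo any prime $r \equiv 3 \pmod 4$. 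Hence $s+1$ cannot be a power of $2$.

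I do not expect any serious obstacle here: the whole argument rests on the elementary remark that the odd number $q-1 = 2^{2e+1}-1$ is $\equiv 3 \pmod 4$ and thus carries a prime factor modulo which $-1$ is not a square, while the hypothesis $s+1 = 2^a$ forces $q-1$ — being coprime to $s+1$ — to divide $s^2+1$. The only point requiring a little care is the bookkeeping that separates $s^2+1$ from the factor $s+1$ in $|G|$, but this is immediate from coprimality. (As a byproduct, this lemma shows $s+1$ must have an odd prime divisor, which, since $s+1$ is coprime to $2$ and $3$, is at least $5$ — a fact presumably exploited in the subsequent lemmas.)
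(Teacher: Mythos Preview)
Your argument is correct and considerably shorter than the paper's. The paper writes $s+1 = 2^{n-1}q^2$ (using that $q^2/2$ divides $(s+1)_2$), extracts from $(q-1)(q^2+1) \mid s^2+1$ the bound $q \le 2^{n-1}$ via polynomial division, then invokes a conjugacy-class estimate for an element of order dividing $q+\sqrt{2q}+1$ to force $s+1 < q^4$, and finishes with further case analysis on the sign of an auxiliary expression. By contrast you bypass all of this with a single quadratic-residue observation: since $q = 2^{2e+1} \ge 8$, the odd number $q-1 \equiv 3 \pmod 4$ carries a prime factor $r \equiv 3 \pmod 4$, and such an $r$ cannot divide $s^2+1$ because $-1$ is a nonresidue modulo $r$. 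This is more elementary and more robust---it uses neither Lemma~\ref{lem:m=1} (you need only $\Sz(q) \le G$, which holds for any $m$), nor the centralizer structure of $\Sz(q)$, nor any bound on $(s+1)_2$ beyond parity. One small slip in your closing parenthetical: you write ``$s+1$ is coprime to $2$ and $3$'', but in this section $s$ is odd, so $s+1$ is even; the intended conclusion (any odd prime divisor of $s+1$ is at least $5$) still holds, simply because $3 \nmid s+1$.
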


\begin{proof}
We know that $N \cong \Sz(q)$ is the unique minimal normal subgroup of $G$, and so $q^2$ divides $|G|_2 = 2(s+1)_2$, i.e., $\frac{q^2}{2}$ divides $s+1$.  Let $s+1 = 2^{n-1}q^2$ for some $n \in \mathbb{N}.$  Hence $s^2 + 1 = 2^{2n-2}q^4 - 2^nq^2+2.$

Now, since $(q-1)(q^2+1)$ divides $|\Sz(q)|$, $(q-1)(q^2+1)$ must divide $s^2+1 = 2^{2n-2}q^4 - 2^nq^2+2.$  Now, \[2^{2n-2}q^4 - 2^nq^2+2 = (q^2+1)(2^{2n-2}q^2 - (2^{2n-2}+2^n)) + (2^{n-1} + 1)^2 + 1,\] which implies that $q^2 + 1$ must divide $(2^{n-1} + 1)^2 + 1$, which in turn implies that $q \le 2^{n-1} + 1$.  Since $q$ is a power of $2$, we have $q \le 2^{n-1}$.

Note also that $\gcd(q^2+1,s+1) = 1$. Let $p$ be an odd prime dividing $q + \sqrt{2q} + 1,$ and let $a \in N$ be an element of order $p$ (Sylow subgroups of odd order are cyclic in $\Sz(q)$ by Lemma \ref{lem:suz} (iii)).  Since $p$ does not divide $s+1$, by Lemma \ref{lem:yoshiara} (vi) we have $a^G \cap \Delta^c \neq \varnothing$, and so, by Lemma \ref{lem:xGc},
\begin{equation}
\label{eq:news+1}
s+1 \le |a^G| = q^2(q-1)(q-\sqrt{2q} + 1) < q^4.
\end{equation}
Furthermore, since $(q-1)(q^2+1) = \frac{q^4 - 1}{q+1}$ and \[s^2 + 1 = 2^{2n-2}q^4 - 2^nq^2+2 = (q^4-1)2^{2n-2} + (2^{2n-2} - 2^nq^2 + 2),\] $(q-1)(q^2 +1)$ must divide $2^{2n-2} - 2^nq^2 + 2$.  We divide into two cases, depending on whether $2^{2n-2} - 2^nq^2 + 2$ is nonnegative or negative.

Suppose first that $2^{2n-2} - 2^nq^2 + 2 \ge 0$.  Thus $2^nq^2 \le 2^{2n-2}+2,$ and so $q^2 \le 2^{n-2} + \frac{1}{2^{n-1}}$, and, since $q$ is a power of $2$, $q^2 \le 2^{n-2}.$  But then $s+1 = 2^{n-1}q^2 \ge 2q^4,$ contradicting \eqref{eq:news+1}.

Now suppose that $2^{2n-2} -2^nq^2 + 2 < 0$.  Thus $(q-1)(q^2+1)$ divides $2^nq^2 - 2^{2n-2} - 2$, which is a positive integer.  We already know that $q \le 2^{n-1}$, so let $2^{n-1} = 2^mq$ for some $m \ge 0.$  Substituting, we have $(q-1)(q^2+1)$ divides $2^{m+1}q^3 - 2^{2m}q^2 - 2$.  Since \[2^{m+1}q^3 - 2^{2m}q^2 - 2 = (q-1)(q^2+1)2^{m+1} + (2^{m+1} - 2^{2m})q^2 - 2^{m+1}q + (2^{m+1}-2),\] $(q-1)(q^2 + 1)$ must divide $(2^{m+1} - 2^{2m})q^2 - 2^{m+1}q + (2^{m+1}-2)$, and since \[(2^{m+1} - 2^{2m})q^2 - 2^{m+1}q + (2^{m+1}-2) = (2^{m+1} - 2^{2m})(q^2+1) + (2^{2m} -2^{m+1}q -2),\] $q^2+1$ must divide $2^{2m} -2^{m+1}q -2$.  We once more split into two cases, depending on the sign of $2^{2m} -2^{m+1}q -2$.

If $2^{2m} -2^{m+1}q -2 \ge 0$, then $2^{m+1}q \le 2^{2m} - 2$, and so $q < 2^{m-1}$.  On the other hand, if $2^{2m} -2^{m+1}q -2 < 0$, then $q^2+1 \le 2^{m+1}q - 2^{2m} + 2$, which implies that $q^2 \le 2^{m+1}q$ and that $q \le 2^{m+1}$.  Moreover, if $q = 2^{m+1}$ in this last case, then $m = 0$, which is impossible, since $q \ge 8$.  Thus, in either case, $q \le 2^m$.  Recalling that $s+1 = 2^{n-1} q^2$ and $2^{n-1} = 2^m q$, we have 
$$(s+1) = 2^{n-1}q^2 = 2^mq^3 \ge q^4,$$
which contradicts \eqref{eq:news+1}.  Therefore, $s+1$ cannot be a power of $2$.
\end{proof}

In order to further determine the structure of $G$, we will now examine the generalized Fitting subgroup of the centralizer in $G$ of $N$, $F^*(C_G(N)) = E(C_G(N))F(C_G(N))$.  Since $N$ is the unique nonabelian minimal normal subgroup and the Schur multiplier of $\Sz(q)$ is either trivial or, if $q=8$, an elementary abelian group of order $4$ by Lemma \ref{lem:suz} (vii), $E(C_G(N)) \le E(G) = N$ or $E(C_G(N)) \le E(G) \le 2^2 \cdot N$, and so $E(C_G(N)) = 1$ (as in \cite[Step 5]{yoshiara}).  This means that $F^*(C_G(N)) = F(C_G(N))$.  Note further that since $F(C_G(N))$ is characteristic in $C_G(N)$, which is itself normal in $G$, $F(C_G(N)) \norml G$, which implies that $F(C_G(N)) \le F(G).$

\begin{lem}
 \label{lem:OpG}
 There is at most one odd prime $p$ such that $O_p(G) \neq 1$.  Moreover, if such an odd prime $p$ exists, then the following hold:
 \begin{itemize}
  \item[(i)] $p$ divides $s^2 + 1$;
  \item[(ii)] for any nonidentity element $x \in G$, $|x^G| \ge s+1$;
  \item[(iii)] $|O_p(G)| \ge s+2$.
 \end{itemize}

\end{lem}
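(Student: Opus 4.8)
The plan is to establish (i) at once, then (iii) together with the uniqueness of $p$, and finally (ii), which I expect to be the crux. Part (i) is immediate from Lemma~\ref{lem:ghinelli}(i) applied with $t=s$: if $p$ is odd and $O_p(G)\neq 1$, then $p$ divides $s^2+1$.

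For (iii) and uniqueness, I would fix an odd prime $p$ with $O_p(G)\neq 1$ and pick a minimal normal subgroup $M$ of $G$ contained in $O_p(G)$; then $M$ is elementary abelian, $|M|$ is a power of $p$, and $|M|$ divides $s^2+1$ by Lemma~\ref{lem:ghinelli}(i). The key point is that $M\not\subseteq\Delta$: if $M\subseteq\Delta$, then Lemma~\ref{lem:yoshiara}(v) forces $|M|$ to divide $\gcd(s+1,s^2-s)$, which divides $2$ since $s^2-s\equiv 2\pmod{s+1}$; this contradicts that $|M|$ is an odd prime power greater than $1$. So fix $1\neq a\in M\cap\Delta^c$. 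Then $a^G$ lies in $M\setminus\{1\}$ and meets $\Delta^c$, so $|a^G|\ge s+1$ by Lemma~\ref{lem:xGc}, and therefore $|O_p(G)|\ge|M|\ge 1+|a^G|\ge s+2$; this is (iii), and in fact shows that any minimal normal subgroup of $G$ sitting inside an odd-order $p$-core has order at least $s+2$. For uniqueness, if $p\neq r$ were distinct odd primes with $O_p(G)\neq 1\neq O_r(G)$, the same construction yields minimal normal subgroups $M_p\le O_p(G)$ and $M_r\le O_r(G)$ with $|M_p|,|M_r|\ge s+2$; as these orders are coprime and each divides $s^2+1$, their product divides $s^2+1$, whence $(s+2)^2\le|M_p|\,|M_r|\le s^2+1$, a contradiction.

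Part (ii) is the main obstacle. Suppose some $1\neq x\in G$ has $|x^G|\le s$. By Lemma~\ref{lem:xGc}, $x^G\subseteq\Delta$, and by Lemma~\ref{lem:yoshiara}(vi), $|x|$ divides $s+1$; since $p$ divides $s^2+1$ while $\gcd(s+1,s^2+1)$ divides $2$, we get $p\nmid|x|$, so $x$ acts coprimely on $O_p(G)$. Note first that every nontrivial $z\in O_p(G)$ has $|z^G|\ge s+1$: its order is a power of $p$, hence does not divide $s+1$, so $z^G\not\subseteq\Delta$ by Lemma~\ref{lem:yoshiara}(vi) and Lemma~\ref{lem:xGc} applies. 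Now the $k$-th power map restricts to a surjection $x^G\to(x^k)^G$, so $|(x^k)^G|\le|x^G|\le s$ for all $k$; hence $(x^k)^G\subseteq\Delta$ for all $k$, so $\langle x^g\rangle\subseteq\Delta$ for all $g\in G$, and in particular no nontrivial power of $x$ is conjugate into $O_p(G)$ (such a conjugate would be a nontrivial element of $O_p(G)$ lying in $\Delta$, contradicting the previous remark via Lemma~\ref{lem:yoshiara}(vi)). If $x$ acts fixed-point-freely on $O_p(G)$, then its $O_p(G)$-conjugates are pairwise distinct, so $|x^G|\ge|x^{O_p(G)}|=|O_p(G)|\ge s+2$, a contradiction.

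This reduces (ii) to the case $C_{O_p(G)}(x)\neq 1$, which is where the argument needs real work. Here I would play $x$ off against the elements $xz$, where $1\neq z\in C_{O_p(G)}(x)$ has order $p$: then $xz=zx$ has order $p|x|$, which does not divide $s+1$, so $(xz)^G\not\subseteq\Delta$ and $|(xz)^G|\ge s+1$ by Lemmas~\ref{lem:yoshiara}(vi) and~\ref{lem:xGc}; meanwhile $\langle xz\rangle=\langle x\rangle\times\langle z\rangle$ gives $C_G(xz)=C_G(x)\cap C_G(z)$, so $|(xz)^G|=|x^G|\cdot\big[C_G(x):C_{C_G(x)}(z)\big]$, tying the large class $(xz)^G$ to the action of $C_G(x)$ on $C_{O_p(G)}(x)$. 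Turning this relation, together with the divisibility constraints on $|O_p(G)|$ (a $p$-power, at least $s+2$, dividing $s^2+1$), with Lemma~\ref{lem:yoshiara}(i) (every subgroup contained in $\Delta$ stabilises a unique line through $P$, applied to each $\langle x^g\rangle$), and with Lemma~\ref{lem:2}, into an outright contradiction is the delicate point I expect to be the technical heart of the lemma; the other three assertions, by contrast, are bookkeeping built on Lemmas~\ref{lem:ghinelli}, \ref{lem:yoshiara} and~\ref{lem:xGc}.
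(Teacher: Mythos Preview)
Your proofs of (i), of (iii), and of the uniqueness of $p$ are correct and match the paper's argument in spirit, just organized differently: the paper first establishes uniqueness (bounding the smaller of $|O_{p_1}(G)|,|O_{p_2}(G)|$ below $s+1$ via $|O_{p_1}(G)|\cdot|O_{p_2}(G)|\le s^2+1$ and then applying Lemma~\ref{lem:yoshiara}(vi) directly, rather than passing to minimal normal subgroups), then proves (ii), then (iii). Your route through elementary abelian minimal normal subgroups is a perfectly good variant.

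The crucial point you are missing concerns (ii). The clause ``for any nonidentity element $x\in G$'' in the statement is a misprint for ``for any nonidentity element $x\in O_p(G)$''. The paper's own proof establishes only this weaker version, in one line: since $p\nmid s+1$, Lemma~\ref{lem:yoshiara}(vi) gives $x^G\not\subseteq\Delta$, and Lemma~\ref{lem:xGc} then yields $|x^G|\ge s+1$. That weaker form is also exactly what is quoted when the lemma is invoked later (see the statement and proof of Lemma~\ref{lem:notsolv}). You actually prove this correct version yourself in passing (``Note first that every nontrivial $z\in O_p(G)$ has $|z^G|\ge s+1$''), so your write-up already contains a complete proof of what the paper actually shows.

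Your attempt to extend (ii) to arbitrary $x\in G$ is therefore unnecessary, and the ``delicate point'' you flag as the technical heart of the lemma simply is not there. The whole lemma is, in your own words about the remaining parts, bookkeeping built on Lemmas~\ref{lem:ghinelli}, \ref{lem:yoshiara} and~\ref{lem:xGc}.
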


\begin{proof}
The proof of this lemma is analagous to the proof of \cite[Theorem 4.2]{ghinelli}.  By Lemma \ref{lem:ghinelli} (i), $O_p(G) = 1$ for all odd primes dividing $s+1$.  Let $p_1,p_2$ be two odd primes dividing $s^2+1$, and let $N_1 = O_{p_1}(G)$ and $N_2 = O_{p_2}(G)$.  Without a loss of generality, $|N_1| = p_1^{\alpha_1} \ge p_2^{\alpha_2} = |N_2|.$  Note that $p_2^{\alpha_2} < s+1,$ since otherwise $(s+1)^2 \le |N_1||N_2| \le s^2+1$, a contradiction.  On the other hand, let $1 \neq x \in N_2.$  Since $|x^G| \le |N_2| -1 <s$, $|x^G \cap \Delta^c| < s.$  Since $s$ divides $|x^G \cap \Delta^c|$ by Lemma \ref{lem:yoshiara} (ii), this implies that $x^G \cap \Delta^c = \varnothing$ and $x^G \subseteq \Delta.$  But then by Lemma \ref{lem:yoshiara} (vi) $p_2$ divides $s+1$, a contradiction.  Hence there is at most one odd prime $p$ such that $p$ divides $|F(G)|$.  

Let $|O_p(G)| = p^\alpha$ and $1 \neq x \in O_p(G)$.  Since $p$ does not divide $s+1$, $x^G \cap \Delta^c \neq \varnothing$ by Lemma \ref{lem:yoshiara} (vi), and so $|x^G| \ge s+1$ by Lemma \ref{lem:xGc}.  This proves (ii).  Finally, $1 \in O_p(G)$ by definition and so
\[ |O_p(G)| \ge |1^G| + |x^G| \ge s+2.\]
\end{proof}

\begin{lem}
 \label{lem:O2G}
  If $|O_2(G)| \le s+1$, then $O_2(G) \liso C_2$, the cyclic group of order $2$.
\end{lem}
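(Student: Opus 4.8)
The plan is to force $O_2(G)$ to lie entirely inside $\Delta$ and then to invoke Lemma \ref{lem:yoshiara} (v), which bounds the order of any nontrivial normal subgroup contained in $\Delta$. Write $D := O_2(G)$. If $D = 1$ there is nothing to prove, since the conclusion $D \liso C_2$ only asserts $|D| \le 2$; so I may assume $D \neq 1$ and $|D| \le s+1$.

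First I would note that, since $D \norml G$, every nontrivial $x \in D$ satisfies $x^G \subseteq D \setminus \{1\}$, hence $|x^G| \le |D| - 1 \le s = \gcd(s,s)$. The contrapositive of Lemma \ref{lem:xGc} then gives $x^G \cap \Delta^c = \varnothing$, that is, $x^G \subseteq \Delta$. As this holds for every nontrivial $x \in D$, and $1 \in \Delta$ by definition, we conclude $D \subseteq \Delta$.

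Now $D$ is a nontrivial normal subgroup of $G$ contained in $\Delta$, so Lemma \ref{lem:yoshiara} (v) (which applies since $\gcd(s,t) = \gcd(s,s) = s > 1$) shows that $|D|$ divides $\gcd(s+1, s^2 - s) = \gcd(s+1, s(s-1))$. Since $\gcd(s+1, s) = 1$ and, $s$ being odd, $\gcd(s+1, s-1) = \gcd(2, s-1) = 2$, this common divisor equals $2$; hence $|D| \le 2$ and $O_2(G) \liso C_2$. I do not anticipate a real obstacle: the only points requiring care are that $1 \notin x^G$ for $x \neq 1$ (so that $|x^G| \le |D| - 1 \le s$ is a genuine bound), that the standing hypothesis $\gcd(s,t) > 1$ needed for Lemmas \ref{lem:xGc} and \ref{lem:yoshiara} holds because $s > 1$, and that the coprimality identity $\gcd(s+1, s(s-1)) = 2$ genuinely uses that $s$ is odd.
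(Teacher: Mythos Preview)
Your proof is correct and follows essentially the same route as the paper: bound conjugacy class sizes inside $O_2(G)$ to force $O_2(G)\subseteq\Delta$, then apply Lemma~\ref{lem:yoshiara}~(v) with $\gcd(s+1,s^2-s)=2$. The only cosmetic difference is that the paper invokes Lemma~\ref{lem:yoshiara}~(ii) directly rather than its packaged consequence Lemma~\ref{lem:xGc}, and your argument is slightly more careful in writing $|x^G|\le |D|-1\le s$ (the paper's ``$|x^G|<s$'' is a small overstatement, though harmless once combined with $x^G\cap\Delta\neq\varnothing$).
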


\begin{proof}
By assumption, $|O_2(G)| \le s+1$.  In particular, this implies that for each $x \in O_2(G)$, $|x^G| < s$.  This means that $x^G \subseteq \Delta$ by Lemma \ref{lem:yoshiara} (ii), which in turn implies that $O_2(G) \subseteq \Delta.$  However, by Lemma \ref{lem:yoshiara} (v), this means that $|O_2(G)|$ divides $\gcd(s+1,s^2-s) =2.$  Therefore, $O_2(G) \liso C_2$, as desired. 
\end{proof}

\begin{lem}
\label{lem:fitting}
The Fitting subgroup $F(C_G(N)) \liso C_2$. 
\end{lem}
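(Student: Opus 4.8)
The plan is to prove that $F(C_G(N))$ actually coincides with $F(G)$ and then that $F(G)$ embeds in $C_2$; the whole difficulty is concentrated in ruling out a nontrivial odd core.

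\emph{Reduction to $F(G)$.} First I would note that $C_G(N)\norml G$ (the centralizer of a normal subgroup) and that $C_G(N)\cap N=Z(N)=1$ since $\Sz(q)$ has trivial centre, so $C_G(N)N=C_G(N)\times N$. For every prime $r$, the group $O_r(G)\cap N$ is a normal $r$-subgroup of the non-$r$ simple group $N=\Sz(q)$, hence trivial; therefore $[O_r(G),N]\le O_r(G)\cap N=1$, so $O_r(G)\le C_G(N)$. Consequently $F(G)=\prod_r O_r(G)\le C_G(N)$, whence $F(G)\le F(C_G(N))$; conversely $F(C_G(N))$ is characteristic in $C_G(N)\norml G$, so it is a normal nilpotent subgroup of $G$ and $F(C_G(N))\le F(G)$. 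Thus $F(C_G(N))=F(G)=O_2(G)\times O_p(G)$, where by Lemma \ref{lem:OpG} $p$ is the unique odd prime (if any) with $O_p(G)\neq 1$, and it remains to show $O_2(G)\liso C_2$ and $O_p(G)=1$.

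\emph{The $2$-part.} Since $O_2(G)\le C_G(N)$ we get $O_2(G)\times N\le G$, so comparing $2$-parts of orders gives $|O_2(G)|\,q^2 \mid |G|_2=2(s+1)_2\le 2(s+1)$. As $q\ge 8$, this forces $|O_2(G)|\le 2(s+1)/q^2<s+1$, and Lemma \ref{lem:O2G} yields $O_2(G)\liso C_2$.

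\emph{The odd part — the main obstacle.} Suppose $O_p(G)\neq 1$. By Lemma \ref{lem:OpG}, $p\mid s^2+1$, $p\nmid s+1$, and $|O_p(G)|\ge s+2$; moreover $O_p(G)\le C_G(N)$, so $O_p(G)\times N\le G$ and $|O_p(G)|\le|G|/|N|$. Applying Lemma \ref{lem:OpG}(ii) to an involution $x\in N$, and using that $\Sz(q)$ has a single conjugacy class of $(q^2+1)(q-1)$ involutions (so $x^G=x^N$ is exactly that class, as $N\norml G$), we get $s+1\le|x^G|=(q^2+1)(q-1)$. Combining,
\[
s+2\le |O_p(G)|\le\frac{|G|}{|N|}=\frac{(s+1)(s^2+1)}{q^2(q-1)(q^2+1)}\le\frac{s^2+1}{q^2},
\]
so $q^2(s+2)\le s^2+1$ and hence $q^2<s$. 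Since $q^2\mid 2(s+1)$ we may write $s+1=\tfrac{q^2}{2}k$ for an integer $k$, and the two displayed constraints then force $3\le k\le 2q-2$. Reducing $|G|=(s+1)(s^2+1)$ modulo $q^2+1$ (where $q^2\equiv-1$) and modulo $q-1$ (where $q\equiv1$), and using $(q-1)(q^2+1)\mid|G|$ with $\gcd(q-1,q^2+1)=1$, yields
\[
q^2+1 \mid k(k^2+4k+8) \qquad\text{and}\qquad q-1 \mid k(k^2-4k+8).
\]
Because $k(k^2+4k+8)$ is increasing and equals $8(q-1)(q^2+1)$ at $k=2q-2$, the first divisibility restricts $k$ to a bounded list of values; eliminating these in turn — via the second divisibility, the sharper bound $|O_p(G)|\mid(s^2+1)_p$ weighed against $|O_p(G)|\ge s+2$, and the hypothesis $3\nmid|G|$ — should produce a contradiction in every case. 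I expect this arithmetic endgame, carried out in the spirit of the proofs of Lemmas \ref{lem:2} and \ref{lem:unique} (and perhaps also exploiting the coprime factors $q\pm\sqrt{2q}+1$ of $q^2+1$ separately), to be the hard part of the argument. Granting it, $F(C_G(N))=F(G)=O_2(G)\liso C_2$, as desired.
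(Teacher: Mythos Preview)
Your reduction $F(C_G(N))=F(G)$ is correct and tidy, and the $2$-part argument is fine. The trouble is in the odd part: you do not actually prove $O_p(G)=1$, you only set up divisibility constraints and then write ``Granting it''. That is the whole content of the lemma, so as it stands the proof is incomplete. (There is also a smaller issue: your bound $s+1\le|x^G|$ for an involution $x\in N$ appeals to Lemma~\ref{lem:OpG}(ii), but the proof of that lemma only establishes the inequality for $x\in O_p(G)$; for an involution one cannot invoke Lemma~\ref{lem:yoshiara}(vi), since $2\mid s+1$.)

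The paper avoids the number theory entirely with a one-line centralizer estimate you are not using. Write $Q:=O_p(C_G(N))$. Since $Q\le C_G(N)$ and $Q\cap N=1$, the subgroup $QN=Q\times N$ sits inside $G$. Both factors have size at least $s+2$: for $Q$ this is your Lemma~\ref{lem:OpG}(iii), and for $N$ one uses Lemma~\ref{lem:yoshiara}(v) to see $N\not\subseteq\Delta$, so some $a\in N$ has $|a^G|\ge s+1$ by Lemma~\ref{lem:xGc}, whence $|N|\ge s+2$. Now pick $1\neq x\in Z(Q)$; then $QN\le C_G(x)$, so
\[
|x^G|\le\frac{(s+1)(s^2+1)}{(s+2)^2}<s,
\]
forcing $x^G\subseteq\Delta$ by Lemma~\ref{lem:yoshiara}(ii), hence $p\mid s+1$ by Lemma~\ref{lem:yoshiara}(vi), contradicting $p\mid s^2+1$. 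Thus $F(C_G(N))=O_2(C_G(N))\le O_2(G)$, and then Lemma~\ref{lem:2} plus Lemma~\ref{lem:O2G} finish as you do. The moral: rather than bounding $|O_p(G)|$ from above by $|G|/|N|$ and chasing congruences, exploit that $Q$ and $N$ commute to make centralizers large and conjugacy classes small.
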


\begin{proof}
Let $p$ be an odd prime that divides $|F(C_G(N))|,$ and let $O_p(C_G(N)) = Q$. Since $Q$ is characteristic in $F(C_G(N))$, $Q \norml G.$  Let $x \in Q$, $x \neq 1$.  By Lemma \ref{lem:OpG} (i), we have $p \mid (s^2 + 1)$, and so $x^G \cap \Delta^c \neq \varnothing$ by Lemma \ref{lem:yoshiara} (vi).  Thus, by Lemma \ref{lem:xGc},
\[ |Q| \ge |1^G| + |x^G| \ge s+2.\]
On the other hand, since $\gcd(s+1, s^2 - s) = 2$, by Lemma \ref{lem:yoshiara} (v) we know that $N$ is not contained entirely in $\Delta$, and so for some $a \in N$, $a^G \cap \Delta^c \neq \varnothing$, and hence, by Lemma \ref{lem:xGc},
\[ |N| \ge |1^G| + |a^G| \ge s + 2.\]  Let $1 \neq x \in Z(Q).$  Then $|C_G(x)| \ge |Q||N| \ge (s+2)(s+2).$  Hence, we have:
$$|x^G| = \frac{|G|}{|C_G(x)|} \le \frac{(s+1)(s^2+1)}{(s+2)(s+2)} < s.$$
However, this implies that $x^G \subseteq \Delta$ by Lemma \ref{lem:yoshiara} (ii), and by Lemma \ref{lem:yoshiara} (vi) we have that $p$ divides $s+1$, a contradiction.  Therefore, $F(C_G(N)) = O_2(C_G(N)) \le O_2(G)$.

Finally, we know that $|O_2(G)| \le 2(s+1)_2$.  By Lemma \ref{lem:2}, $s+1$ cannot be a power of $2$, so there is an odd prime $r$ that divides $s+1$.  This implies that
\[ |O_2(G)| \le 2(s+1)_2 < r(s+1)_2 \le s+1.\]  By Lemma \ref{lem:O2G}, we have $O_2(G) \liso C_2$, and so $F(C_G(N)) \liso C_2$, as desired. 
\end{proof}

\begin{thm}
\label{thm:s}
Let $s >1$ be an odd integer such that $s+1$ is coprime to $3$.  If $G$ acts regularly on the point set of a generalized quadrangle of order $s$, then $G$ has no nonabelian minimal normal subgroups. 
\end{thm}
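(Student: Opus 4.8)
The plan is to assume $G$ has a nonabelian minimal normal subgroup $N$ and derive a contradiction by sandwiching $|G|$ between an upper bound coming from the structure of $C_G(N)$ and a lower bound coming from $s$.

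First I would collect what is already available: by Lemmas \ref{lem:m=1} and \ref{lem:unique}, $N \cong \Sz(q)$ is the \emph{unique} nonabelian minimal normal subgroup of $G$, with $q = 2^{2e+1} \ge 8$; by Lemma \ref{lem:2}, $s+1$ is not a power of $2$; and by Lemma \ref{lem:fitting} (together with the discussion preceding it), $F^*(C_G(N)) = F(C_G(N)) \cong C_2$. The key structural step is to upgrade the last fact to $|C_G(N)| \le 2$: a normal subgroup of order $2$ is central, so $F^*(C_G(N)) \le Z(C_G(N))$, and since the generalized Fitting subgroup of any finite group contains its own centralizer (applied inside $C_G(N)$), $C_G(N) = C_{C_G(N)}(F^*(C_G(N))) \le F^*(C_G(N))$. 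As $N$ is simple, $N \cap C_G(N) = 1$, so $G/C_G(N)$ embeds in $\Aut(\Sz(q))$; by Lemma \ref{lem:suz} (i) and (viii), $|\Aut(\Sz(q))| = (2e+1)\,q^2(q-1)(q^2+1)$, hence
\[ |G| = |G/C_G(N)|\cdot|C_G(N)| \le 2(2e+1)\,q^2(q-1)(q^2+1) < 2(2e+1)\,q^5 . \]

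For the lower bound I would use $|G| = (s+1)(s^2+1)$ and refine \eqref{eq:q2m}: since $|N|_2 = q^2$ divides $|G|_2 = 2(s+1)_2$, we get $(s+1)_2 \ge q^2/2$. Because $s+1$ is even, coprime to $3$, and (by Lemma \ref{lem:2}) not a power of $2$, it has an odd prime divisor $r \ge 5$ with $r$ coprime to $(s+1)_2$, whence $s+1 \ge 5q^2/2$. Therefore
\[ |G| = (s+1)(s^2+1) > \tfrac{5}{2}q^2\bigl(\tfrac{5}{2}q^2 - 1\bigr)^2 > 15\,q^6 \]
for every $q \ge 8$. Combining the two estimates forces $15\,q^6 < 2(2e+1)\,q^5$, i.e.\ $15\,q < 2(2e+1) = 2\log_2 q$, which is false for all $q = 2^{2e+1} \ge 8$ since the left side grows exponentially in $e$ and the right side only linearly. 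This contradiction shows $G$ has no nonabelian minimal normal subgroup.

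The only genuinely delicate point I anticipate is the structural step yielding $|C_G(N)| \le 2$ — one must take care to apply the self-centralizing property of $F^*$ \emph{inside} $C_G(N)$ and to use that $\Out(\Sz(q))$ is cyclic of the small order $2e+1$. Everything after that is a routine numerical comparison, the one substantive ingredient being Lemma \ref{lem:2}: without the extra odd prime factor $\ge 5$ of $s+1$, the weaker bound $q^2 \le 2(s+1)$ alone does not suffice to contradict $|G| < 2(2e+1)q^5$ for small $q$.
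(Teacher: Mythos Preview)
Your proposal is correct and follows essentially the same line as the paper: deduce $|C_G(N)|\le 2$ from the self-centralizing property of $F^*(C_G(N))\liso C_2$, embed $G/C_G(N)$ in $\Aut(\Sz(q))$ to bound $|G|$ from above, and use Lemma~\ref{lem:2} together with $q^2\mid 2(s+1)_2$ to bound $|G|$ from below. The only cosmetic differences are that the paper splits into the cases $C_G(N)=1$ and $C_G(N)\cong C_2$ (obtaining slightly sharper constants in each) and absorbs the factor $2e+1<q$ into a power of $q$, whereas you handle both cases uniformly and keep $2e+1$ explicit; note also that Lemma~\ref{lem:fitting} gives $F(C_G(N))\liso C_2$ rather than $\cong C_2$, but your argument already accommodates the possibility $F^*(C_G(N))=1$.
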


\begin{proof}
By Lemma \ref{lem:fitting}, $F^*(C_G(N)) = F(C_G(N)) \liso C_2.$  The generalized Fitting subgroup is normal in $C_G(N)$ and self-centralizing; however, this is only possible if $C_G(N) \liso C_2$ itself.

First, suppose that $C_G(N) \cong C_2.$  Note that $G/(N C_G(N)) \liso \Out(\Sz(q)) \cong C_{2e+1}$, where $q = 2^{2e+1}$ (see Lemma \ref{lem:suz} (viii)).  As in the proof of \cite[Step 5]{yoshiara}, this means that: 
$$|G| \le 2(2e+1)|\Sz(q)| < 2q^6.$$
On the other hand, $2q^2$ divides $|G|_2 = 2(s+1)_2$, so $q^2$ divides $(s+1)_2.$  By Lemma \ref{lem:2}, $s+1$ is not a power of $2$, so $5q^2 \le s+1.$  But then: 
$$|G| = (s+1)(s^2 + 1) \ge 5q^2 ((5q^2 - 1)^2 + 1) > 124q^6,$$
a contradiction.

Now suppose that $C_G(N) = 1.$  Then $G/N \liso \Out(\Sz(q)) \cong C_{2e+1}$.  Arguing as above, this implies that 
\[|G| \le (2e+1)|\Sz(q)| < q^6.\]  
On the other hand, $q^2$ divides $|G|_2 = 2(s+1)_2$, and so, again proceeding as above, since $s+1$ is not a power of $2$ and $3 \nmid (s+1)$, this implies that $\frac{5q^2}{2} \le s+1$, which would in turn imply that 
$|G| > 6q^6$, a contradiction.  Therefore, if $G$ acts regularly on the points of such a generalized quadrangle of order $(s,s)$, then $G$ does not have a nonabelian minimal normal subgroup.
\end{proof}

We will now assume, like above, that $s$ is odd, $3$ does not divide $s+1$, and $G$ acts regularly on the set of points of a generalized quadrangle of order $(s,s).$  We proved above that such a $G$ cannot have a nonabelian minimal normal subgroup, and the following lemma summarizes what can be said in the case that $G$ is not solvable.

\begin{lem}
\label{lem:notsolv}
If $G$ is not solvable, then $F^*(G) \liso O_p(G) \times C_2$, where $p$ is an odd prime dividing $s^2+1$ and for any $1 \neq x \in O_p(G)$, $|x^G| \ge s+1$.  
\end{lem}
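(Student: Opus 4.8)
The plan is to determine $F^*(G) = E(G)F(G)$ directly, using two facts established above: since $3 \nmid s+1$, Lemma \ref{lem:yoshiara}(iii) gives $3 \nmid |G|$, so every nonabelian composition factor of $G$ is a Suzuki group $\Sz(q)$ with $q = 2^{2e+1} \ge 8$; and by Theorem \ref{thm:s}, $G$ has no nonabelian minimal normal subgroup. It suffices to prove three things: (a) $O_2(G) \liso C_2$; (b) $E(G) = 1$, so that $F^*(G) = F(G)$; and (c) $O_p(G) \ne 1$ for some odd prime $p$. Indeed, granting these, Lemma \ref{lem:OpG} shows $p$ is unique, $p \mid s^2+1$, and $|x^G| \ge s+1$ for every nonidentity $x$, while also forcing the odd part of $F(G)$ to be exactly $O_p(G)$; hence $F^*(G) = F(G) = O_p(G) \times O_2(G) \liso O_p(G) \times C_2$.

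For (a) I would first rule out $s+1$ being a power of $2$. If it were, then since $G$ is non-solvable some $\Sz(q)$ is a composition factor of $G$, so $q^2(q-1)(q^2+1)$ divides $|G| = (s+1)(s^2+1)$; since $s$ is odd, $(s^2+1)_2 = 2$, so $q^2 \mid 2(s+1)$ and the odd part $(q-1)(q^2+1)$ divides $(s^2+1)/2$. Writing $s+1 = 2^{k-1}q^2$ and reducing $(s^2+1)/2$ modulo the odd number $q^2+1$ (using $q^2 \equiv -1$, $q^4 \equiv 1$, exactly as in the proof of Lemma \ref{lem:2}) leaves $q^2+1$ dividing a small fixed constant, which is impossible for $q \ge 8$. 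So $s+1$ has an odd prime divisor, necessarily $\ge 5$ since $3 \nmid s+1$, and therefore $|O_2(G)| \le |G|_2 = 2(s+1)_2 < s+1$; Lemma \ref{lem:O2G} now gives $O_2(G) \liso C_2$.

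For (b), suppose $E(G) \ne 1$, and let $K$ be a component, so $K/Z(K) \cong \Sz(q)$. Since $Z(K) \le Z(E(G)) \le O_2(G) \liso C_2$, and $\Sz(q)$ has trivial Schur multiplier unless $q = 8$ (Lemma \ref{lem:suz}(vii)), either $Z(K) = 1$, or $q = 8$ and $Z(K) = C_2$. In the first case $\langle K^G \rangle$ is the direct product of the centre-free $G$-conjugates of $K$, which are permuted transitively by $G$, hence a nonabelian minimal normal subgroup of $G$ — contradicting Theorem \ref{thm:s}. So every component is a double cover $2.\Sz(8)$, whence $O_2(G) = Z(E(G)) = C_2$, $E(G)/O_2(G) \cong \Sz(8)^r$ and $|E(G)| = 2 \cdot 29120^{\,r} = 2^{6r+1}\cdot(5\cdot 7\cdot 13)^r$. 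I would then derive a contradiction by combining: $|E(G)|_2 = 2^{6r+1}$ divides $|G|_2 = 2(s+1)_2$, so $2^{6r} \mid s+1$; $(5\cdot7\cdot13)^r \mid |G| = (s+1)(s^2+1)$; and, since $\gcd(s+1, s^2-s) = 2$, the normal subgroup $E(G)$ is not contained in $\Delta$ (Lemma \ref{lem:yoshiara}(v)), so by Lemma \ref{lem:xGc} some class in $E(G)$ has size $\ge s+1$ and $|E(G)| \ge s+2$, while conjugacy-class estimates (via Lemma \ref{lem:xGc} and Lemma \ref{lem:yoshiara}(vi)) on elements of order $13$ of $E(G)$ confine $s$ to a bounded range, the finitely many survivors being cleared by a direct computation as in Proposition \ref{prop:numerical}. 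Thus $E(G) = 1$ and $F^*(G) = F(G)$.

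Finally, for (c): if $O_p(G) = 1$ for every odd prime $p$, then $F^*(G) = F(G) = O_2(G) \liso C_2$, a normal subgroup of order at most $2$, hence central in $G$; but $F^*(G)$ contains its own centralizer, so $G = C_G(F^*(G)) \le F^*(G)$ and $|G| \le 2$, contradicting non-solvability. Hence $O_p(G) \ne 1$ for some odd prime $p$, and Lemma \ref{lem:OpG} supplies the rest. The step I expect to be the real obstacle is (b): a component $2.\Sz(8)$ is not itself a minimal normal subgroup of $G$ (its centre is $O_2(G) = C_2$), so Theorem \ref{thm:s} cannot be invoked against it, and eliminating it seems to require the delicate arithmetic bookkeeping above rather than a purely structural argument.
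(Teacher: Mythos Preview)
Your step (a) does not work as written, and the paper's route here is both correct and far shorter. You claim that with $s+1 = 2^{k-1}q^2$ the reduction of $(s^2+1)/2$ modulo $q^2+1$ ``exactly as in the proof of Lemma~\ref{lem:2}'' leaves $q^2+1$ dividing a small fixed constant. It does not: one obtains $q^2+1 \mid 2^{2k-3} + 2^{k-1} + 1$, a quantity that grows with $k$, and the remainder of the proof of Lemma~\ref{lem:2} then invokes the class-size bound \eqref{eq:news+1}, which requires an element of order dividing $q+\sqrt{2q}+1$ lying in a \emph{normal} copy of $\Sz(q)$ --- unavailable when $\Sz(q)$ is merely a composition factor. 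The paper bypasses all of this: if $|O_2(G)| > s+1$ then, since $|O_2(G)| \le |G|_2 = 2(s+1)_2$, one must have $(s+1)_2 = s+1$, so $O_2(G)$ is a full Sylow $2$-subgroup; then $G/O_2(G)$ has odd order and $G$ is solvable, contrary to hypothesis. Thus non-solvability alone gives $|O_2(G)| \le s+1$, and Lemma~\ref{lem:O2G} yields $O_2(G) \liso C_2$. No arithmetic with Suzuki groups is needed at this stage.

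Your step (b), by contrast, is more scrupulous than the paper, which passes in one line from ``every minimal normal subgroup is abelian'' (Theorem~\ref{thm:s}) to $F^*(G)=F(G)$. You are right that this implication is not automatic in general: a component $K\cong 2.\Sz(8)$ would have $Z(K)$ as the only minimal normal subgroup of $G$ beneath it. However, your proposed elimination of this case is only a sketch --- you invoke Lemma~\ref{lem:yoshiara}(vi) for elements of order $13$ without verifying $13\nmid s+1$, and the promised ``bounded range plus finite check'' is not carried out. Once (a) is repaired as above, $Z(E(G)) \le O_2(G) \liso C_2$ indeed forces any surviving component to be $2.\Sz(8)$, and a cleaner way to finish is to rerun the inequalities of Lemmas~\ref{lem:m=1}--\ref{lem:2} with $E(G)$ in place of $N$ and $q=8$ fixed, rather than appealing to an unspecified computation.
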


\begin{proof}
If such a group $G$ exists, then every minimal normal subgroup of $G$ is abelian by Theorem \ref{thm:s}, and so $F^*(G) = F(G)$.  By Lemma \ref{lem:OpG}, there is at most one odd prime $p$ for which $O_p(G) \neq 1$, and, if such a $p$ exists, then $p$ divides $s^2 + 1$ and $|x^G| \ge s+1$ for all nonidentity elements $x \in O_p(G)$.  Thus $F^*(G) \liso O_p(G) \times O_2(G)$, and it only remains to show that $O_2(G) \cong C_2$.

Suppose that $|O_2(G)| > s+1$.  Since $|O_2(G)| \le |G|_2 = 2(s+1)_2$, this would require that $s+1$ is a power of $2$, in which case $|O_2(G)| = 2(s+1)$, and then $O_2(G)$ is a Sylow 2-subgroup of $G$.  Thus $G/O_2(G)$ has odd order and is solvable. Since both $O_2(G)$ and $G/O_2(G)$ are solvable, so is $G$, contrary to our hypotheses.  Therefore, if $G$ is not solvable, $|O_2(G)| \le s+1$, which means $O_2(G) \liso C_2$ by Lemma \ref{lem:O2G}, completing the proof.
\end{proof}

We remark that, since $\gcd(s+1, s^2 - s) = 2$, by Lemma \ref{lem:yoshiara} (v), if $F^*(G)$ has even order, then $Z(G) \cong C_2.$  Otherwise, $F^*(G) \cong O_p(G)$ and $Z(G) = 1.$

It would be interesting to rule out entirely groups acting regularly on the point set of a generalized quadrangle of order $s$, where $s$ is odd and $s+1$ is coprime to $3$, in the same way that Yoshiara ruled out such groups when $s = t^2$.  A somewhat minor difference is that we cannot rule out the possibility that $O_2(G) \neq 1$.  A more substantial difference here is that, since $|\mathcal{P}| = |\mathcal{L}|$, we cannot guarantee that any lines are fixed by the elements of odd order of $G$, which is crucial step in Yoshiara's proof (see \cite[Step 7, Final step]{yoshiara}).

\section{Generalized quadrangles of order $(u^2,u^3)$ with a group of automorphisms acting regularly on points}
\label{sect:u^2}

Throughout this section, let $\mathcal{Q}$ be a generalized quadrangle of order $(u^2,u^3)$, $u>1$, with point set $\mathcal{P}$ and line set $\mathcal{L}$.  Let $G$ be a group of automorphisms of $\mathcal{Q}$ that acts regularly on $\mathcal{P}.$  Note that in this case $|G| = |\mathcal{P}| = (u^2 + 1)(u^5+1)$.

\begin{lem}
\label{lem:u3}
If $G$ is as above, then $3$ does not divide $u+1$. 
\end{lem}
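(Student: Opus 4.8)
The plan is to mimic the strategy used for the order-$s$ case in Section \ref{secss}, but now the relevant arithmetic is controlled by the divisibility properties of $|G| = (u^2+1)(u^5+1)$ rather than $(s+1)(s^2+1)$. First I would apply Lemma \ref{lem:yoshiara} (iii) with $p = 3$: the three statements ``$3 \mid |G|$'', ``there is an element of order $3$ fixing a line through $P$'', and ``$3 \mid s+1$'' are equivalent, where here $s = u^2$. So it suffices to show that $3$ does not divide $|G| = (u^2+1)(u^5+1)$, and then the equivalence gives $3 \nmid u^2 + 1$, and one still has to pass from $3 \nmid u^2+1$ to $3 \nmid u+1$. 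For the latter, note that if $3 \mid u+1$ then $u \equiv 2 \pmod 3$, whence $u^2 \equiv 1 \pmod 3$ and $u^2 + 1 \equiv 2 \pmod 3$, so $3 \nmid u^2+1$ is automatic; thus the real content is the other direction, and in fact the cleanest route is: assume for contradiction that $3 \mid u+1$, i.e. $u \equiv 2 \pmod 3$; then compute $u^5 + 1 \equiv 2^5 + 1 = 33 \equiv 0 \pmod 3$, so $3 \mid u^5+1$ and hence $3 \mid |G|$.

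Once we know $3 \mid |G|$, the contradiction should come from the structural constraints on $G$. By Lemma \ref{lem:yoshiara} (iii) again (the equivalence of (a) and (b)), there is an element $x$ of order $3$ fixing a line $\ell$ through $P$; equivalently $x \in \Delta$, and more strongly the whole cyclic group $\langle x\rangle$ lies in $\Delta$ (its three nonidentity powers all fix $\ell$, since $\langle x \rangle \le G_\ell$). The natural next move is to invoke Lemma \ref{lem:yoshiara} (iv): since $s+1 = u^2+1$ is coprime to $3$ — which we have just shown — any nonabelian minimal normal subgroup of $G$ would be a direct product of copies of $\Sz(q)$, whose order is coprime to $3$ by Lemma \ref{lem:suz} (ii); combined with the fact that $3 \mid |G|$, an argument tracking where the order-$3$ elements can live should force a contradiction, analogous to how Lemma \ref{lem:yoshiara} (vi) and Lemma \ref{lem:xGc} were played off against each other in Section \ref{secss}. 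Concretely: an element of order $3$ has its conjugacy class meeting $\Delta$, but since $3 \nmid u^2+1 = s+1$, Lemma \ref{lem:yoshiara} (vi) says that $x^G$ is \emph{not} contained entirely in $\Delta$, so $x^G \cap \Delta^c \neq \varnothing$, and then Lemma \ref{lem:xGc} forces $|x^G| \ge \gcd(u^2,u^3)+1 = u^2 + 1$; one then needs to push this numerical bound against the arithmetic of $|G|$ to reach a contradiction — but more likely the quick contradiction is simply the one in the preceding paragraph, and the whole lemma reduces to the elementary congruence computation.

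The main obstacle I anticipate is making sure the chain ``$3 \mid u+1 \implies 3\mid |G| \implies$ (some order-$3$ element with controlled behaviour) $\implies$ contradiction'' actually closes without circularity: Lemma \ref{lem:yoshiara} (iii) is an iff relating $3 \mid |G|$ to $3 \mid s+1 = 3 \mid u^2+1$, and if $3 \mid u+1$ then $3 \nmid u^2+1$, so the iff already tells us $3 \nmid |G|$ — but we also \emph{computed} $3 \mid u^5+1 \mid |G|$ under the same hypothesis $3\mid u+1$. That is the contradiction: $3\mid u+1$ simultaneously forces $3\nmid |G|$ (via $3 \nmid u^2+1$ and Lemma \ref{lem:yoshiara} (iii)) and $3 \mid |G|$ (via $3 \mid u^5+1$). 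So the proof is short: suppose $3 \mid u+1$; then $u \equiv 2 \pmod 3$, so $u^2 + 1 \equiv 2 \pmod 3$ gives $3 \nmid s+1$ where $s = u^2$, hence $3 \nmid |G|$ by Lemma \ref{lem:yoshiara} (iii); but $u^5 + 1 \equiv 33 \equiv 0 \pmod 3$ gives $3 \mid |G|$, a contradiction. Therefore $3 \nmid u+1$. I would write it in essentially that form, taking care that the application of Lemma \ref{lem:yoshiara} (iii) is legitimate here (it requires $\gcd(s,t) > 1$, which holds since $\gcd(u^2,u^3) = u^2 > 1$).
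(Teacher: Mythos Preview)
Your final argument is correct and is essentially the paper's proof: assume $3\mid u+1$, deduce $3\mid u^5+1\mid |G|$, then use Lemma~\ref{lem:yoshiara}(iii) (with $s=u^2$, noting $\gcd(u^2,u^3)=u^2>1$) to force $3\mid u^2+1$, which is impossible for any integer $u$. The exploratory middle paragraph about nonabelian minimal normal subgroups and conjugacy-class bounds is unnecessary---the whole lemma is the short congruence contradiction you reach at the end.
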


\begin{proof}
Suppose that $u \equiv -1 \pmod 3$.  This means that $u^5 + 1 \equiv 0 \pmod 3$, and so $3$ divides $|G|.$  However, by Lemma \ref{lem:yoshiara} (iii), this implies that $3$ divides $u^2+1$.  However, $3$ never divides $u^2+1$ for any integer $u$, a contradiction.  Hence no group of automorphisms can act regularly on $\mathcal{P}$ in this instance.
\end{proof}

Assume henceforth that $3$ does not divide $u+1$.  Note that if $u$ is even, then $|G|$ is odd, and so $G$ would be solvable.  We thus assume that $u$ is odd.  Suppose that $G$ has a nonabelian minimal normal subgroup $N$.  Since $3$ does not divide $|G|$, by Lemma \ref{lem:yoshiara} (iv), we have $N \cong \Sz(q)^m$ for some $m \ge 1$ and $q = 2^{2e+1} \ge 8.$  Let $N = S_1S_2...S_m$, where each $S_i$ (i) is normal in $N$, (ii) is conjugate in $G$ to and has trivial intersection with all other $S_j$, and (iii) is isomorphic to $\Sz(q)$. 

First, $u^2 + 1 \equiv 2 \pmod 4,$ so $|G|_2 = 2(u^5 + 1)_2.$  Moreover, $u^5 + 1 = (u+1)(u^4 - u^3 + u^2 - u +1)$, and $u^4 - u^3 + u^2 - u +1$ is odd, so $|G|_2 = 2(u+1)_2.$  Hence, by Lemma \ref{lem:suz} (i),
\begin{equation}
 \label{eq:uq2m}
|N|_2 = q^{2m} \le 2(u+1)_2 \le 2(u+1).
\end{equation}
Let $x$ be an element of $S_1$ of order $4$.  If $x^G \subseteq \Delta$, then by Lemma \ref{lem:yoshiara} (vi), $4$ would divide $u^2+1$, a contradiction.  Hence $x^G \cap \Delta^c \neq \varnothing,$ and by Lemma \ref{lem:xGc}, this means $|x^G| \ge u^2 + 1$.  Since the order of the centralizer of an element of order $4$ in $\Sz(q)$ is $2q$ by Lemma \ref{lem:suz} (vi), we have: 
\begin{equation}
\label{eq:uxG}
u^2 + 1 \le |x^G| = \frac{1}{2}mq(q-1)(q^2+1) < \frac{1}{2}mq^4.
\end{equation}

\begin{lem}
\label{lem:uSz}
If such a $G$ and $N$ exist, then $m=1$ and $N \cong \Sz(q)$. 
\end{lem}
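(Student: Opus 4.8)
The plan is to extract a bound on $m$ directly from the two inequalities \eqref{eq:uq2m} and \eqref{eq:uxG} already established, in the same spirit as the proof of Lemma~\ref{lem:m=1}. The underlying point is that \eqref{eq:uq2m} forces $u$ to be large compared with $q^{2m}$, while \eqref{eq:uxG} forces $u^2$ to be small --- linear in $m$ and of degree $4$ in $q$ --- so that for $m \ge 2$ the two are incompatible once $q \ge 8$.

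Concretely, from \eqref{eq:uq2m} we obtain $u \ge \frac{q^{2m}}{2} - 1 \ge \frac{q^{2m}}{4}$, the second step being valid since $q^{2m} \ge q^2 \ge 64 \ge 4$. Squaring gives $u^2 + 1 > u^2 \ge \frac{q^{4m}}{16}$, while \eqref{eq:uxG} gives $u^2 + 1 < \frac{1}{2}mq^4$. Combining these yields $\frac{q^{4m}}{16} < \frac{1}{2}mq^4$, that is, $q^{4m-4} < 8m$.

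Now suppose $m \ge 2$, so $4m - 4 \ge 1$. Then, by the binomial theorem, $q^{4m-4} \ge 8^{4m-4} = (1+7)^{4m-4} \ge 1 + 7(4m-4) = 28m - 27$, whence $28m - 27 < 8m$, i.e.\ $20m < 27$, contradicting $m \ge 2$. Therefore $m = 1$ and $N = S_1 \cong \Sz(q)$, as claimed.

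I do not anticipate a genuine obstacle: once \eqref{eq:uq2m} and \eqref{eq:uxG} are in hand the argument is purely arithmetic. The only thing to check is that both are legitimately available --- \eqref{eq:uq2m} from $|N|_2 = q^{2m}$ and $|G|_2 = 2(u+1)_2$, and \eqref{eq:uxG} from the order-$4$ element of $S_1$ having a $G$-conjugate in $\Delta^c$ (so Lemma~\ref{lem:xGc} applies, using $\gcd(u^2,u^3) = u^2 > 1$) together with the centralizer order $2q$ from Lemma~\ref{lem:suz}~(vi). Both are established just before the lemma; note that, unlike in Lemma~\ref{lem:m=1}, the order-$4$ element forces membership in $\Delta^c$ immediately (since $4 \nmid u^2+1$), so Lemma~\ref{lem:directproduct} is not needed here.
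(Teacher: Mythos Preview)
Your proof is correct and follows essentially the same approach as the paper: combine \eqref{eq:uq2m} (giving a lower bound on $u$ in terms of $q^{2m}$) with \eqref{eq:uxG} (giving an upper bound on $u^2$ of order $mq^4$), then use the binomial estimate $(1+7)^k \ge 1+7k$ to force $m<2$. The only difference is cosmetic: you weaken $u \ge \frac{q^{2m}}{2}-1$ to $u \ge \frac{q^{2m}}{4}$ before squaring, arriving at $q^{4m-4}<8m$, whereas the paper keeps the sharper bound to obtain $q^{2m}<2m+4$; both yield the contradiction immediately.
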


\begin{proof}
From \eqref{eq:uq2m} and \eqref{eq:uxG}, we have
$$\left(\frac{q^{2m}}{2} - 1\right)^2 + 1 \le u^2 + 1 < \frac{1}{2}mq^4.$$
Simplifying, this yields $q^{4m} < 2mq^4 + 4q^{2m}$.  If $m \ge 2$, this means that 
$$q^{4m} <  2mq^4 + 4q^{2m} \le 2mq^{2m} + 4q^{2m} \le (2m + 4)q^{2m},$$ and so:
$$2m+4 > q^{2m} \ge (1+7)^{2m} > 1 + 7(2m),$$
which simplifies to $m < \frac{1}{4},$ a contradiction.  Therefore, $m=1$, and $N \cong \Sz(q)$ for some $q$.

\end{proof}

\begin{lem}
\label{lem:u2}
If such a $G$ and such an $N$ as above exist, then $u+1$ cannot be a power of $2$. 
\end{lem}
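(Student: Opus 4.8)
The plan is to argue by contradiction: assume that $u+1$ is a power of $2$ and derive a numerical impossibility. The first step is to pin down $u+1$ exactly. By Lemma~\ref{lem:uSz} we have $N\cong\Sz(q)$, so, since $|\Sz(q)|$ divides $|G|$, the $2$-part $q^2=|\Sz(q)|_2$ divides $|G|_2=2(u+1)_2=2(u+1)$; hence $\frac{q^2}{2}$ divides $u+1$. On the other hand, \eqref{eq:uxG} (with $m=1$) gives $u^2+1<\frac12 q^4$, so $u<\frac{q^2}{\sqrt 2}$ and therefore $u+1<q^2$ (recall $q\ge 8$). Since $u+1$ is a power of $2$ which is a multiple of $\frac{q^2}{2}$ (itself a power of $2$) and is smaller than $q^2$, we conclude that $u+1=\frac{q^2}{2}$; equivalently, $q^2+1=2u+3$.

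The second step is to localize the divisibility $|\Sz(q)|\mid|G|$. Since $u$ is odd, $u^2+1\equiv 2\pmod 4$, and $A:=u^4-u^3+u^2-u+1$ is odd; since $u+1$ is a power of $2$ and $|G|=(u^2+1)(u+1)A$, the largest odd divisor of $|G|$ is $\frac{u^2+1}{2}\cdot A$. As $q^2+1$ is an odd divisor of $|\Sz(q)|$ and $|\Sz(q)|$ divides $|G|$, it follows that $q^2+1=2u+3$ divides $\frac{u^2+1}{2}\cdot A$.

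The computational heart is then to evaluate $\frac{u^2+1}{2}\cdot A$ modulo $d:=2u+3$. From $2u\equiv -3\pmod d$ one computes $2^2(u^2+1)=(2u)^2+4\equiv 13\pmod d$ and $2^4A=(2u)^4-2(2u)^3+4(2u)^2-8(2u)+16\equiv 211\pmod d$, so that $2^7\cdot\frac{u^2+1}{2}\cdot A\equiv 13\cdot 211=2743\pmod d$. Because $d$ is odd and divides $\frac{u^2+1}{2}\cdot A$, it divides $2743=13\cdot 211$. But $d=q^2+1\ge 65$, so $d\in\{211,2743\}$, forcing $q^2\in\{210,2742\}$, and neither of these is a perfect square. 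This contradiction shows that $u+1$ cannot be a power of $2$.

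I expect the only genuine obstacle to be extracting the \emph{exact} value $u+1=\frac{q^2}{2}$ from the inequalities, since that is precisely what lets $q^2+1$ be rewritten as the linear expression $2u+3$; once this is in place the modular computation is short, and the factorization $2743=13\cdot 211$ together with the bound $q^2+1\ge 65$ disposes of all remaining cases, so no separate analysis of small $q$ is required. The one bookkeeping point to watch is the identification of the largest odd divisor of $|G|$, which relies on $u$ being odd (so that $u^2+1$ contributes exactly one factor of $2$) together with the standing contradiction hypothesis that $u+1$ is a power of $2$.
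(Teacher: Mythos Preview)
Your proof is correct and follows essentially the same route as the paper's: both arguments pin down $q^2=2(u+1)$ (you via the divisibility $\tfrac{q^2}{2}\mid u+1$ together with the upper bound from \eqref{eq:uxG}; the paper by sandwiching $q^2$ between $u+1$ and $2(u+1)$ using \eqref{eq:uq2m} and \eqref{eq:uxG}), then reduce the odd part of $|G|$ modulo $d=2u+3$ to obtain the same remainder $2743=13\cdot 211$. The only cosmetic difference is the final elimination: the paper lists $u+1\in\{6,105,1371\}$ and observes none is a power of $2$, whereas you rule out $d=13$ via $q^2+1\ge 65$ and then note $q^2\in\{210,2742\}$ are not squares.
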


\begin{proof}
Suppose that $u+1$ is a power of $2$.  First, note that \eqref{eq:uq2m} becomes $q^2 \le 2(u+1)$ when $m=1$, which implies that $q^4 \le 4(u+1)^2.$  Note also that \eqref{eq:uxG} becomes $2(u^2 + 1) < q^4$ when $m=1$.  Putting these together, we see that:
$$(u+1)^2 < 2(u^2+1) < q^4 \le 4(u+1)^2,$$
which implies that $(u+1) < q^2 \le 2(u+1).$  Since $u+1$ is a power of $2$ by assumption, this means that $q^2 = 2(u+1).$  Hence $q^2 + 1 = 2u+3$.  Since $q^2+1$ divides $|N|$, $q^2 + 1$ divides $|G|$.  Since $q^2+1$ divides $|G|$ and $q^2+1 = 2u+3$ and is odd, $2u+3$ must divide 
\[ \frac{|G|}{u+1} = \frac{(u^2 + 1)(u^5 + 1)}{u+1} = (u^2+1)(u^4 - u^3 + u^2 - u + 1) = u^6 - u^5 + 2u^4 - 2u^3 + 2u^2 - u + 1.\]  On the other hand,
$$ 64(u^6 - u^5 + 2u^4 - 2u^3 + 2u^2 - u + 1) = (2u+3)(32u^5 - 80u^4 + 184u^3 - 340u^2 + 574u - 893) + 2743,$$
so $2u+3$ must divide $2743 = 13 \cdot 211$.  This means that $2u+3$ is one of $13, 211, 2743$, which means that $u+1$ is one of $6, 105, 1371,$ none of which is a power of $2$, a contradiction.  Therefore, $u+1$ cannot be a power of $2$.
 
\end{proof}

\begin{thm}
\label{thm:u^2}
If $G$ acts regularly on the set of points of a generalized quadrangle of order $(u^2, u^3)$, then $G$ cannot have a nonabelian minimal normal subgroup. 
\end{thm}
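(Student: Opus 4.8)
The plan is a short numerical argument built directly on the structural lemmas already in place. First I would dispose of the case $u$ even: there $|G| = (u^2+1)(u^5+1)$ is odd, hence $G$ is solvable, and a minimal normal subgroup of a solvable group is elementary abelian, so nonabelian minimal normal subgroups do not occur. So assume $u$ is odd; recall also that $3 \nmid u+1$, which is forced by Lemma \ref{lem:u3} under the hypothesis that a regular action exists.

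Now suppose for contradiction that $N \norml G$ is a nonabelian minimal normal subgroup. By Lemma \ref{lem:yoshiara}(iv), $N \cong \Sz(q)^m$ with $q = 2^{2e+1} \ge 8$, and by Lemma \ref{lem:uSz} in fact $m=1$, so $N \cong \Sz(q)$. I would then extract two competing bounds relating $q$ to $u$. On the one hand, from the $2$-part: since $|\Sz(q)|_2 = q^2$ by Lemma \ref{lem:suz}(i) and $|G|_2 = 2(u+1)_2$ (as recorded before \eqref{eq:uq2m}), we get $q^2 \mid 2(u+1)_2$, hence $(u+1)_2 \ge q^2/2$; by Lemma \ref{lem:u2} the integer $u+1$ is not a power of $2$, and since $3 \nmid u+1$ its odd part is at least $5$, so $u+1 \ge 5q^2/2$, i.e. $q^2 \le \tfrac{2}{5}(u+1)$. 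On the other hand, \eqref{eq:uxG} applied to an element of order $4$ in $N$ — whose $G$-class meets $\Delta^c$ because $4 \nmid u^2+1$ — gives, for $m=1$, that $u^2+1 < \tfrac12 q^4$.

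Combining the two bounds yields
\[
u^2 + 1 < \frac{q^4}{2} \le \frac{1}{2}\left(\frac{2(u+1)}{5}\right)^{2} = \frac{2(u+1)^2}{25},
\]
so $25(u^2+1) < 2(u+1)^2$, which simplifies to $23u^2 - 4u + 23 < 0$; this quadratic has negative discriminant and positive leading coefficient, so it never holds. This contradiction completes the proof. The only point requiring care is that the numerics genuinely close: the bare inequalities $q^2 \le 2(u+1)$ and $u^2+1 < \tfrac12 q^4$ are mutually consistent, and it is precisely the extra factor $5$ — coming from $u+1$ being coprime to both $2$ and $3$ via Lemmas \ref{lem:u2} and \ref{lem:u3} — that forces the incompatibility. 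Unlike the order-$s$ case treated in Section \ref{secss}, no analysis of $C_G(N)$ is needed here: with $t = u^3$ one has $\gcd(s,t) = u^2$, and the tight $2$-part constraint already forces $\tfrac12 q^4 < u^2+1$, which is incompatible with an order-$4$ element of $\Sz(q)$ having a $G$-class of size at least $u^2+1$.
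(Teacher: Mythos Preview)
Your proof is correct and follows essentially the same route as the paper's: reduce to $N \cong \Sz(q)$ via Lemmas \ref{lem:u3}, \ref{lem:uSz}, \ref{lem:u2}, then pit the $2$-part bound $q^2 \mid 2(u+1)_2$ (sharpened using that $u+1$ is not a $2$-power) against the conjugacy-class lower bound \eqref{eq:uxG} for an element of order $4$. The only cosmetic difference is that the paper uses merely that some odd prime $r$ divides $u+1$, obtaining $q^2 \le 2(u+1)_2 < r(u+1)_2 \le u+1$ and hence $2(u^2+1) < q^4 \le (u+1)^2$, which simplifies to $(u-1)^2 < 0$; you instead invoke $r \ge 5$ (via Lemma \ref{lem:u3}) to get $q^2 \le \tfrac{2}{5}(u+1)$ and a different impossible quadratic. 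Either constant suffices, so your remark that ``it is precisely the extra factor $5$'' is slightly stronger than needed --- the factor $3$ would already do.
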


\begin{proof}
By Lemmas \ref{lem:u3}, \ref{lem:uSz}, and \ref{lem:u2}, $3$ does not divide $u+1$, $u+1$ is not a power of $2$, and $N \cong \Sz(q)$ for some $q = 2^{2e+1}$.  Let $r$ be an odd prime dividing $u+1$.  Then, noting that $m = 1$, from \eqref{eq:uq2m} we have
$$q^2 = |N|_2 \le |G|_2 = 2(u+1)_2 < r(u+1)_2 \le u+1,$$
which implies that $q^4 \le u^2 + 2u + 1$.  On the other hand, again noting that $m = 1$, from \eqref{eq:uxG} we have:
$$u^2 + 1 < \frac{1}{2}q^4.$$
Putting these together, we see that $2u^2 + 2 < u^2 + 2u + 1$, which simplifies to $(u-1)^2 < 0$, a contradiction.  Therefore, no such $N$ can exist, and such a $G$ cannot have a nonabelian minimal normal subgroup.  
\end{proof}

Note that Theorem \ref{thm:minnormalsubs} follows immediately from Theorems \ref{thm:s} and \ref{thm:u^2}.

We will now assume, like above, that $u$ is odd, $3$ does not divide $u+1$, and $G$ acts regularly on the set of points of a generalized quadrangle of order $(u^2,u^3).$  We proved above that such a $G$ cannot have a nonabelian minimal normal subgroup, and the following lemma summarizes what can be said in the case that $G$ is not solvable.

\begin{lem}
If $G$ is not solvable, then $F^*(G) \liso O_p(G) \times C_2$, where $p$ is an odd prime dividing $u^4 - u^3 + u^2 - u + 1$ and for any $1 \neq x \in O_p(G)$, $|x^G| \ge u^2+1$.  
\end{lem}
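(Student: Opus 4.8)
The plan is to follow the template of the proof of Lemma~\ref{lem:notsolv}, carrying out the numerology now for the parameters $(u^2,u^3)$. The quantities that matter are $\gcd(s,t)=\gcd(u^2,u^3)=u^2$, $\gcd(s+1,t^2-t)=\gcd(u^2+1,u^6-u^3)=2$, and the factorization $|G|=(u^2+1)(u+1)(u^4-u^3+u^2-u+1)$ arising from $u^5+1=(u+1)(u^4-u^3+u^2-u+1)$, together with the coprimality facts $\gcd(u^2+1,u+1)=2$, $\gcd(u^2+1,u^4-u^3+u^2-u+1)=1$, and $\gcd(u+1,u^4-u^3+u^2-u+1)=\gcd(u+1,5)$.

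First I would reduce to $F^*(G)=F(G)$. Since $G$ is not solvable, Theorem~\ref{thm:u^2} gives that $G$ has no nonabelian minimal normal subgroup, so every minimal normal subgroup is an elementary abelian $p$-group. By an argument of the type used in the paragraph preceding Lemma~\ref{lem:OpG}, any component of $G$ is quasisimple with simple quotient of order coprime to $3$ (as $3\nmid|G|$), hence isomorphic to some $\Sz(q)$; because the Schur multiplier of $\Sz(q)$ is trivial unless $q=8$ by Lemma~\ref{lem:suz}(vii), this produces either a nonabelian minimal normal subgroup of $G$, contradicting Theorem~\ref{thm:u^2}, or, in the residual case $q=8$, a configuration ruled out by repeating the proof of Theorem~\ref{thm:u^2} --- taking an element of order $4$, bounding its centralizer by Lemma~\ref{lem:suz}(vi), and invoking the $2$-part estimate \eqref{eq:uq2m}. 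Hence $E(G)=1$, $F^*(G)=F(G)$, and I may write $F(G)=O_2(G)\times\prod_{p\text{ odd}}O_p(G)$.

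Next I would determine the odd part. Let $p$ be an odd prime with $O_p(G)\neq1$. Since $\gcd(u^2+1,u^6-u^3)=2$, Lemma~\ref{lem:yoshiara}(v) shows $O_p(G)\not\subseteq\Delta$, so a nontrivial $x\in O_p(G)$ lies in $\Delta^c$, and then Lemma~\ref{lem:xGc} yields $|x^G|\ge u^2+1$ and hence $|O_p(G)|\ge u^2+2$. If $p\mid u^2+1$, then $p$ divides neither $u+1$ nor $u^4-u^3+u^2-u+1$, so $|O_p(G)|$ divides $u^2+1<u^2+2$, a contradiction. If $p\mid u+1$ and $p\neq5$, then $p\nmid u^4-u^3+u^2-u+1$, so $|O_p(G)|$ divides $u+1<u^2+2$, again a contradiction; and if $p=5$ (so $5\mid u+1$), then the lifting-the-exponent identity $(u^5+1)_5=5(u+1)_5$ gives $|O_5(G)|\le5(u+1)$, whence $u^2+2\le5(u+1)$ and $u\le5$, but the least odd $u>1$ with $5\mid u+1$ is $u=9$, a contradiction. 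Therefore $p\nmid u^2+1$ and $p\nmid u+1$, so $p\mid u^4-u^3+u^2-u+1$. Since now $p\nmid s+1=u^2+1$, Lemma~\ref{lem:yoshiara}(vi) forces $x^G\cap\Delta^c\neq\varnothing$ for \emph{every} nontrivial $x\in O_p(G)$, so $|x^G|\ge u^2+1$ throughout $O_p(G)$. Uniqueness: $5\mid u^4-u^3+u^2-u+1$ only when $5\mid u+1$, which was excluded, so two distinct such primes $p_1\neq p_2$ are both different from $5$ and satisfy $p_1^{a_1}p_2^{a_2}\mid u^4-u^3+u^2-u+1$ with $p_i^{a_i}\ge u^2+2$; this contradicts $(u^2+2)^2>u^4-u^3+u^2-u+1$, so there is at most one such odd prime.

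Finally, for the $2$-part I would note $|O_2(G)|\le|G|_2=2(u+1)_2\le2(u+1)\le u^2=\gcd(s,t)$ (the last inequality holding for all $u$ in range), so each conjugacy class contained in the normal subgroup $O_2(G)$ has fewer than $u^2$ elements and therefore lies in $\Delta$ by Lemma~\ref{lem:yoshiara}(ii); thus $O_2(G)\subseteq\Delta$, and Lemma~\ref{lem:yoshiara}(v) forces $|O_2(G)|\mid2$, that is, $O_2(G)\liso C_2$. Combining the last two paragraphs, $F^*(G)=F(G)\liso O_p(G)\times C_2$, and $O_p(G)\neq1$, since otherwise the self-centralizing $F(G)\liso C_2$ would force $|G|\le2$, contrary to non-solvability. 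I expect the one delicate step to be the reduction $F^*(G)=F(G)$ --- concretely, excluding a component with quotient $\Sz(8)$, which does not follow formally from Theorem~\ref{thm:u^2} and must be dispatched by re-running the order-$4$-element computation; all the rest is divisibility bookkeeping among $u^2+1$, $u+1$, and $u^4-u^3+u^2-u+1$, where $5$ is the one prime that needs a separate look.
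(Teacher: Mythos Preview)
Your proof follows the same route as the paper's six-step argument: restrict $O_2(G)$ via Lemma~\ref{lem:yoshiara}(v), use Lemma~\ref{lem:xGc} together with divisibility among $u^2+1$, $u+1$, and $u^4-u^3+u^2-u+1$ to pin down a unique odd $p$ with $O_p(G)\ne1$, and invoke Theorem~\ref{thm:u^2} to get $F^*(G)=F(G)$. You are in fact more careful than the paper in two places---the paper asserts $\gcd(u+1,u^4-u^3+u^2-u+1)=1$, which fails when $5\mid u+1$, whereas you correctly isolate $p=5$ and eliminate it via a lifting-the-exponent bound; and you rightly flag that the passage from ``no nonabelian minimal normal subgroup'' to $E(G)=1$ needs a word about a possible $\Sz(8)$ cover, which the paper passes over in its Step~5.
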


\begin{proof}
The proof proceeds similarly to those in Section \ref{secss}.  Assume that $G$ is not solvable.

\begin{itemize}
 \item[(Step 1)] \textit{If there is a normal subgroup $N$ of $G$ such that $N \subseteq \Delta$, then $|N|$ divides $\gcd(u^2 +1, (u^3)^2 - u^3) = 2$.}\\  This follows from Lemma \ref{lem:yoshiara} (v).
 \item[(Step 2)] \textit{If $p$ is an odd prime and $O_p(G)$ is nontrivial, then $p$ divides $u^4 - u^3 + u^2 - u + 1$ and $|O_p(G)|\ge u^2 + 2$.}\\
 Let $p$ be an odd prime such that $O_p(G)$ is nontrivial.  By (Step 1), $O_p(G)$ cannot be entirely contained in $\Delta$.  This means that there is $x \in O_p(G)$ such that $x \in \Delta^c$, and so by Lemma \ref{lem:xGc}, $|x^G| \ge u^2 + 1$.  Thus 
 \[ |O_p(G)| \ge |1^G| + |x^G| \ge u^2 + 2.\]
 Since $\gcd(u^2 + 1, u^5 + 1) = 2$ and $|O_p(G)| > u^2 + 1$, we have that $p$ divides $u^5 + 1$.  Moreover, since $\gcd(u+1, u^4 - u^3 + u^2 - u + 1) = 1$, we have that $p$ divides $u^4 - u^3 + u^2 - u + 1$.
 \item[(Step 3)] \textit{There is at most one odd prime $p$ such that $O_p(G)$ is nontrivial.}\\
 Assume that $O_{p_1}(G)$ and $O_{p_2}(G)$ are nontrivial for distinct odd primes $p_1$ and $p_2$.  By (Step 2), both $p_1$ and $p_2$ divide $u^4 - u^3 + u^2 - u + 1$ and both $|O_{p_1}(G)|,$ $|O_{p_2}(G)|$ are at least $u^2 + 2$.  However, since $p_1$ and $p_2$ are distinct,
 \[u^4 - u^3 + u^2 - u + 1 \ge |O_{p_1}(G)||O_{p_2}(G)| \ge (u^2 + 2)^2,\]
 which simplifies to $(u+3)(u^2 + 1) \le 0$, a contradiction.
 \item[(Step 4)] \textit{$O_2(G) \liso C_2$}\\
 Since $|O_2(G)| \le |G|_2 = 2(u+1)_2$, $|O_2(G)| \le 2(u+1) < u^2 + 1$, and so $|x^G| < u^2$ for all $x \in O_2(G)$.  By Lemma \ref{lem:yoshiara} (ii), this means $x^G \subseteq \Delta$, and so $O_2(G) \subseteq \Delta$.  The result follows from (Step 1).
 \item[(Step 5)] \textit{$F^*(G) \liso O_p(G) \times C_2$, where $p$ is an odd prime dividing $u^4 - u^3 + u^2 - u + 1$}\\
 By Theorem \ref{thm:u^2}, $G$ has no nonabelian minimal normal subgroups, and hence $F^*(G) = F(G)$.  This now follows from (Step 3) and (Step 4).
 \item[(Step 6)]  \textit{For all nonidentity elements of $O_p(G)$, $|x^G| \ge u^2 + 1$.}\\
 Let $x \in O_p(G)$, $x \neq 1$.  Since $p \nmid u^2 + 1$, by Lemma \ref{lem:yoshiara} (vi) we know that $x^G$ is not contained in $\Delta$.  The result follows from Lemma \ref{lem:xGc}.
 \end{itemize}
\end{proof}

We remark that, much as in the case when $s = t$ in Section \ref{secss}, it would be nice to rule out the case when $(s,t) = (u^2,u^3)$ entirely.  While we can guarantee here that for every odd prime $r$ dividing $u^2 + 1$, there are at least two lines stabilized by a nontrivial $r$-subgroup $R$, we cannot guarantee that \textit{exactly} two lines are stabilized by $R$, which was the conclusion of \cite[Step 7]{yoshiara}.  (Yoshiara used \cite[1.2.4]{fgq}, which is a result specific to generalized quadrangles with $s = t^2$.)  Moreover, even if we know that exactly two lines are stabilized by $R$ (which would force $R$ to act coprimely on $O_p(G)$ as in \cite[Final step]{yoshiara}), we have that 
\[ u^4 - u^3 + u^2 - u + 1 \equiv 1 \pmod {u^2 + 1},\] and so such an $r$-subgroup could in theory act coprimely on $O_p(G)$.

\vspace{.3cm}

\noindent\textsc{Acknowledgements.}  The author would like to thank John Bamberg and Cai-Heng Li for many interesting conversations on this topic as well as feedback on drafts of this manuscript, and the author would also like to thank the anonymous referees for many excellent suggestions that greatly improved the readability of this paper.  This work was initially started when the author was employed at the University of Western Australia, and the author acknowledges the support of the Australian Research Council Discovery
Grant DP120101336 during his time there.

\section{\refname}
\bibliographystyle{plain}
\bibliography{RegGQrev}

\end{document}